\numberwithin{equation}{section}
\newtheorem{thm}{Theorem}[section]
\newtheorem{lem}[thm]{Lemma}
\newtheorem{defi}[thm]{Definition}
\newtheorem{rem}[thm]{Remark}
\newtheorem{rems}[thm]{Remarks}
\newtheorem{prop}[thm]{Proposition}
\newtheorem{assum}[thm]{Assumption}
\newtheorem{thmx}{Theorem}
\newcommand{\NN}{\mathbb{N}}	
\newcommand{\ZZ}{\mathbb{Z}}	
\newcommand{\QQ}{\mathbb{Q}}	
\newcommand{\BB}{\mathbb{B}}
\newcommand{\CC}{\mathbb{C}}
\newcommand{\RR}{\mathbb{R}}
\newcommand{\AAA}{\mathbb{A}}
\newcommand{\BR}{\mathbb{B}}
\newcommand{\HH}{\mathcal{H}}
\newcommand{\Dcris}{\mathbb{D}_{\mathrm{crys}}}
\newcommand{\Oe}{\mathcal{O}_{E}}
\newcommand{\GG}{\Gamma}
\newcommand{\OO}{\mathcal{O}}
\newcommand{\vp}{\varphi}
\newcommand{\lv}{\Vert}
\newcommand{\Qp}{\QQ_p}
\newcommand{\Cp}{\CC_p}
\newcommand{\Zp}{\ZZ_p}
\newcommand{\Brig}{\BB^{+}_{\mathrm{rig},\Qp}}
\newcommand{\BrigE}{\BB^{+}_{\mathrm{rig},E}}
\newcommand{\intiwG}{\Lambda_{\Oe}(\GG_1)}
\newcommand{\Mbar}{\underline{M}}
\newcommand{\LW}{\mathcal{L}_{W}}
\newcommand{\MatA}{\begin{pmatrix}
0 & -1/vp^{k-1} \\[6pt]
1 & a/vp^{k-1}
\end{pmatrix}}
\newcommand{\MatQ}{\begin{pmatrix}
\alpha & -\beta \\[6pt]
-vp^{k-1} & vp^{k-1}
\end{pmatrix}}
\newcommand{\Hiw}{\mathrm{H}^{1}_{\mathrm{Iw}}(\Qp,W)}
\newcommand{\LT}{\mathcal{L}_{T_W}}
\newcommand{\Matwach}{\begin{bmatrix}(1+\pi)\vp(n_1) & (1+\pi)\vp(n_2)\end{bmatrix}}
\newcommand{\HiwT}{\mathrm{H}^{1}_{\mathrm{Iw}}(\Qp,T_W)}
\newcommand{\OT}{\Omega_{T_{W},k-1}}
\newcommand{\OW}{\Omega_{W,k-1}}
\newcommand{\ideles}{\mathbb{A}_{K}^{\times}}
\newcommand{\PP}{\mathfrak{p}}
\newcommand{\PPP}{\overline{\mathfrak{p}}}
\newcommand{\qq}{\mathfrak{q}}
\newcommand{\pset}{\{\mathfrak{p},\overline{\mathfrak{p}}\}}
\newcommand{\qqq}{\mathfrak{q}}
\newcommand{\tilX}{\widetilde{\Xi}}
\newcommand{\FF}{\mathcal{F}}
\newcommand{\Mbarp}{\underline{M_{\mathfrak{p}}}}
\newcommand{\Mbarpp}{\underline{M_{\overline{\mathfrak{p}}}}}
\newcommand{\Laa}{L_{\alpha_{\PP},\alpha_{\PPP}}}
\newcommand{\Lab}{L_{\alpha_{\PP},\beta_{\PPP}}}
\newcommand{\Lba}{L_{\beta_{\PP},\alpha_{\PPP}}}
\newcommand{\Lbb}{L_{\beta_{\PP},\beta_{\PPP}}}
\newcommand{\Lsa}{L_{\sharp,\alpha_{\PPP}}}
\newcommand{\Lfa}{L_{\flat,\alpha_{\PPP}}}
\newcommand{\Lsb}{L_{\sharp,\beta_{\PPP}}}
\newcommand{\Lfb}{L_{\flat,\beta_{\PPP}}}
\newcommand{\delx}{\dfrac{\partial}{\partial X}}
\newcommand{\matqp}{Q^{-1}_{\PP}}
\newcommand{\delp}{\partial_{\PP}}
\newcommand{\Lss}{L_{\sharp,\sharp}}
\newcommand{\Lsf}{L_{\sharp,\flat}}
\newcommand{\Lfs}{L_{\flat,\sharp}}
\newcommand{\Lff}{L_{\flat,\flat}}
\newcommand{\Bstd}{\mathcal{B}}
\newcommand{\Beigen}{\mathcal{B}_{\mathrm{eig}}}
\newcommand{\gl}{\mathrm{GL}_{2}}
\newcommand{\frkn}{\mathfrak{n}}
\newcommand{\frkm}{\mathfrak{m}}
\DeclareMathOperator{\Art}{Art}
\DeclareMathOperator{\Gal}{Gal}
\DeclareMathOperator{\supt}{sup_{t}}
\DeclareMathOperator{\modd}{mod}
\DeclareMathOperator{\Col}{\underline{Col}}
\DeclareMathOperator{\adj}{adj}
\DeclareMathOperator{\rec}{rec}
\DeclareMathOperator{\mat}{Mat}
\newcommand{\green}[1]{\textcolor{green}{#1}}
\begin{document}
\baselineskip 18pt

\title{ Signed $p$-adic $L$-functions of Bianchi modular forms}
\author{Mihir V. Deo} 
\email{mdeo048@uottawa.ca}
\address{Department of Mathematics and Statistics, University of Ottawa, Ottawa, ON, Canada K1N 6N5}
\date{}

\keywords{Iwasawa theory, $p$-adic $L$-functions, $p$-adic Hodge theory, Bianchi modular forms}

\begin{abstract}
Let $p\geq 3$ be a prime number and $K$ be a quadratic imaginary field in which $p$ splits as $\mathfrak{p}\overline{\mathfrak{p}}$. Let $\mathcal{F}$ be a cuspidal Bianchi eigenform over $K$ of weight $(k,k)$, where $k\geq 0$ is an integer, level $\mathfrak{m}$ coprime to $p$, and non-ordinary at both of the primes above $p$. We assume $\mathcal{F}$ has trivial nebentypus. For $\mathfrak{q}\in\{\mathfrak{p}, \overline{\mathfrak{p}}\}$, let $a_{\mathfrak{q}}$ be the $T_{\mathfrak{q}}$ Hecke eigenvalue of $\mathcal{F}$ and let $\alpha_{\mathfrak{q}},\beta_{\mathfrak{q}}$ be the roots of polynomial $X^{2} -a_{\mathfrak{q}}X+ p^{k+1}$. Then we have four $p$-stabilizations of $\mathcal{F}$: $\mathcal{F}^{\alpha_{\mathfrak{p}},\alpha_{\overline{\mathfrak{p}}}}, \mathcal{F}^{\alpha_{\mathfrak{p}},\beta_{\overline{\mathfrak{p}}}}, \mathcal{F}^{\beta_{\mathfrak{p}},\alpha_{\overline{\mathfrak{p}}}},$ and $ \mathcal{F}^{\beta_{\mathfrak{p}},\beta_{\overline{\mathfrak{p}}}}$ which are Bianchi cuspforms of level $p\mathfrak{m}$. By the works of Williams, to each $p$-stabilization $\mathcal{F}^{*,\dagger}$, we can attach a locally analytic distribution $L_{p}(\mathcal{F}^{*,\dagger})$ over the ray class group $\text{Cl}(K,p^{\infty})$. On viewing $L_{p}(\mathcal{F}^{*,\dagger})$ as a two-variable power series with coefficients in some $p$-adic field having unbounded denominators satisfying certain growth conditions, we decompose this power series into a linear combination of power series with bounded coefficients in the spirit of Pollack, Sprung, and Lei--Loeffler--Zerbes.
\end{abstract}
\maketitle
\section{Introduction}
The study and construction of $p$-adic $L$-functions of arithmetic objects, like modular forms, is one of the central topics in modern number theory. 
The analytic $p$-adic $L$-functions are  
 distributions on $p$-adic Lie groups like $\Zp$.
For example, let $f$ be an elliptic modular eigenform of weight $k\geq 2$, level $N$ and character $\epsilon$, and let $p$ be a prime such that $p\nmid N$. 
Let $\alpha$ be a root of the Hecke polynomial $X^2- a_{p}X+\epsilon(p)p^{k-1}$ such that $v_{p}(\alpha)<k-1$, where $v_{p}$ is the normalized $p$-adic valuation such that $v_{p}(p)=1$, and $a_{p}$ is 
the $T_{p}$-eigenvalue of $f$. 
Then, due to the constructions of Amice-Velu and Vishik (see \cite{AV}, \cite{Vishik}) we can attach to $f$ a $p$-adic distribution $L_{p}(f,\alpha)$ of order $v_{p}(\alpha)$ 
over $\Zp^\times$.
This $L_{p}(f,\alpha)$ interpolates the critical values of the complex $L$-function of $f$.

We continue with the example of $p$-adic $L$-functions  
of modular forms.
When $f$ is good ordinary at $p$, i.e. $v_{p}(a_p)=0$, 
$L_{p}(f,\alpha)$ is a bounded measure and hence an element of the Iwasawa algebra $\Lambda_{K}(\GG)\cong K\otimes \OO_{K}[\Delta][[\GG_{1}]]$, where $K$ is some finite extension of $\Qp$, $\GG=\Gal(\Qp(\mu_{p^\infty})/\Qp)\cong \Delta\times\GG_{1}$, $\Delta\cong (\ZZ/p\ZZ)^{\times}$, and $\GG_{1}\cong\Zp$.
In this setting, 
the arithmetic is well understood and we have 
an Iwasawa main conjecture 
which relates this $p$-adic $L$-function with the characteristic ideal of the Selmer group of $f$ (proved in many cases by Kato in \cite{Kato}, Skinner-Urban in \cite{SU}, Wan in \cite{Wan}, etc).

When $f$ is good non-ordinary at $p$, i.e., $a_{p}$ is not a $p$-adic unit, $L_{p}(f,\alpha)$ is no longer a measure and hence not an element of the Iwasawa algebra. 
Moreover, it has unbounded denominators and it is an element of a 
larger algebra known as the distribution algebra $\HH_{K}(\GG)$ (see section 2 for the definition).
When $a_{p}=0$, Pollack in \cite{Pollack} has given a remedy. 
If $\alpha_{1}, \alpha_{2}$ are the roots of $X^{2}+\epsilon(p)p^{k-1}$, Pollack showed that there exists a decomposition 
$$L_{p}(f,\alpha_{i}) = \log^{+}_{p,k}L_{p}^{+} + \alpha_{i}\log^{-}_{p,k}L_{p}^{-},$$
where $L_{p}^{\pm} \in \Lambda_{K}(\GG)$, for some finite extension $K$ of $\Qp$, and $\log^{\pm}_{p,k}$ are some power series in $\HH_{\Qp}(\GG_{1})$ depending only on $k$.
He also showed that if $k=2$, then $L_{p}^{\pm}$ have integral coefficients, i.e. they lie in $\Zp[\Delta][[\GG]]$.
Later Sprung (for $k=2$) 
in \cite{Spr1} and Lei--Loeffler--Zerbes (for $k\geq 2$) in \cite{LLZ1} have extended the work 
of Pollack when $a_{p}\neq 0$ using the method of logarithmic matrices. 

\begin{rem}
    On the algebraic side, we also have notions of signed Selmer groups due to Kobayashi (for $k=2, a_{p}=0$) in \cite{Koba}, Sprung (for $k=2, v_{p}(a_{p}) > 0$) in \cite{Spr1}, Lei (for $k\geq 2, a_{p}=0$) in \cite{Lei1}, and Lei--Loeffler--Zerbes (for $k\geq 2, v_{p}(a_{p})>0$) in \cite{LLZ1}.
    We also have signed Iwasawa main conjectures relating signed $p$-adic $L$-functions of non-ordinary modular forms with signed Selmer groups. See \cite{LLZ1} for more details.
\end{rem}

In this article, we extend the construction of signed $p$-adic $L$-functions (due to Pollack, Sprung, and Lei--Loeffler--Zerbes) to the setting of Bianchi modular forms using the two-variable $p$-adic $L$-functions constructed by Williams in \cite{Will}.
Bianchi modular forms are automorphic forms for $\mathrm{GL}(2)$ over quadratic imaginary fields.
Let $K$ be a quadratic 
imaginary field.
Fix a prime number $p\geq3$, which splits in $K$ as $(p)=\PP\PPP$, and let $k\geq 0$ be an integer.
Also, assume that $p$ does not divide the class number of $K$.
{
Let $\mathcal{G}$ be a cuspidal Bianchi eigenform over $K$ of weight $(k,k)$, level $\frkn$ such that $p$ divides $\frkn$ and $v_{p}(a_{\qq})<(k+1)$, where $a_{\qq}$ is the $U_{\qq}$ Hecke eigenvalue for all primes $\qq$ of $K$ which lie above $p$.
Then Williams has constructed a two-variable $p$-adic $L$-function $L_{p}(\mathcal{G})$ (see \cite[Theorem 7.4]{Will}) of a cuspidal Bianchi modular form $\mathcal{G}$ using overconvergent modular symbols.
More precisely, $L_{p}(\mathcal{G})$ is a locally analytic distribution over the 
ray class group $\text{Cl}(K,p^{\infty}) = G_{p^{\infty}}=\varprojlim_{n} G_{(p)^n}$, where $G_{(p)^n}$ is the ray class group of $K$ modulo $(p)^n$. 

We start with a Bianchi cuspform $\FF$ of weight $(k,k)$, level $\frkm$ coprime to $p$, and $\FF$ is good non-ordinary at both of the primes above $p$, that is, $v_{p}(a_{\PP})>0$ and $v_{p}(a_{\PPP})>0$, where $a_{\PP}$ and $a_{\PPP}$ are $T_{\PP}$ and $T_{\PPP}$ Hecke eigenvalues of $\FF$ respectively.
For $\qq\in\pset$, we assume $v_{p}(a_{\qqq}) > \left\lfloor \dfrac{k}{p-1} \right\rfloor$.
Moreover, let $\alpha_{\qq}$ and $\beta_{\qq}$ be the roots of Hecke polynomial $X^{2}-a_{\qq}X+p^{k+1}$ which we assume are distinct.
Then we get four $p$-stabilizations of $\FF$: $\FF^{\alpha_{\PP},\alpha_{\PPP}}, \FF^{\alpha_{\PP},\beta_{\PPP}}, \FF^{\beta_{\PP},\alpha_{\PPP}}$ and $\FF^{\beta_{\PP},\beta_{\PPP}}$, which are cuspidal Bianchi modular forms of 
the same weight as $\FF$ and level $p\mathfrak{m}$. 
Thanks to Williams, we can attach a two-variable $p$-adic $L$-function to each of the $p$-stabilizations $L_{*,\dagger} \coloneqq L_{p}(\FF^{*,\dagger})$, for $*\in\{\alpha_{\PP},\beta_{\PP}\}$ and $\dagger \in \{\alpha_{\PPP},\beta_{\PPP}\}$.
The main theorem of this article is:
\begin{thmx}[Theorem \ref{mainthm2}]
\label{thmA}
There exist 
two-variable power series with bounded coefficients, that is, there exist 
$L_{\sharp,\sharp}, L_{\sharp,\flat}, L_{\flat,\sharp}, L_{\flat,\flat}\in \Lambda_{E}(G_{p^{\infty}})$ such that
    \begin{equation*}
        \begin{pmatrix}
            L_{\alpha_{\PP},\alpha_{\PPP}} & L_{\beta_{\PP},\alpha_{\PPP}}\\[6pt]
            L_{\alpha_{\PP},\beta_{\PPP}} & L_{\beta_{\PP},\beta_{\PPP}}
        \end{pmatrix}
        = Q^{-1}_{\PPP}\underline{M_{\PPP}} \begin{pmatrix}
            L_{\sharp,\sharp} & L_{\flat,\sharp}\\[6pt]
            L_{\sharp,\flat} & L_{\flat,\flat}
        \end{pmatrix} (Q^{-1}_{\PP}\underline{M_{\PP}})^{T},
    \end{equation*}    
    where $\Mbarp$ and $\Mbarpp$ are  
    $2 \times 2$ logarithmic matrices, $Q_{\qqq} = \begin{pmatrix} \alpha_{\qqq} & -\beta_{\qqq} \\[6pt]
-p^{k-1} & p^{k-1} \end{pmatrix}$, 
and $\Lambda_{E}(G_{p^{\infty}}) \cong E[\Delta_{K}] \otimes_{\Oe} \Oe[[T_{1},T_{2}]]$, where $\Delta_{K}$ is a finite abelian group such that $G_{p^{\infty}} \cong \Delta_{K}\times \Zp^{2}$.
\end{thmx}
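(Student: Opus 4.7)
My proposal is to reduce the two-variable statement to the one-variable decomposition of Pollack-Sprung-Lei-Loeffler-Zerbes (\cite{LLZ1}) applied iteratively, once at each of the primes $\PP, \PPP$. This is natural because $p$ splits in $K$: the ray class group $G_{p^\infty}$ decomposes, up to torsion, as $\GG_\PP \times \GG_\PPP$ with $\GG_\qq \cong \Zp$ the local cyclotomic Galois group at $\qq$, and the distribution (resp.\ Iwasawa) algebra correspondingly factors as a completed tensor product $\HH_E(G_{p^\infty}) \cong \HH_E(\GG_\PP) \otimes \HH_E(\GG_\PPP)$ (resp.\ $\Lambda_E(G_{p^\infty}) \cong \Lambda_E(\GG_\PP) \otimes \Lambda_E(\GG_\PPP)$) up to the torsion factor $E[\Delta]$. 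The two decompositions therefore act on independent variables and commute.

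\emph{Step 1 (decomposition at $\PP$).}  Fix $\dagger \in \{\alpha_\PPP, \beta_\PPP\}$ and consider the pair $(L_{\alpha_\PP, \dagger}, L_{\beta_\PP, \dagger})$. These two distributions differ only in the choice of $\vp$-eigenvector at $\PP$; by the interpolation property of Williams's construction, they arise by pairing a common underlying $\PP$-local Perrin-Riou big logarithm against $\alpha_\PP$ or $\beta_\PP$. By \cite{LLZ1}, this big logarithm factors through the rank-two Wach module at $\PP$, producing the logarithmic matrix $\Mbarp$ and a change-of-basis $Q_\PP$ between the Wach basis and the $\vp$-eigenbasis. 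Viewing the $\PPP$-component as a parameter in $\HH_E(\GG_\PPP)$, the one-variable decomposition yields
\begin{equation*}
\begin{pmatrix} L_{\alpha_\PP, \dagger} \\[2pt] L_{\beta_\PP, \dagger}\end{pmatrix} \;=\; Q_\PP^{-1}\, \Mbarp \begin{pmatrix} L_{\sharp, \dagger} \\[2pt] L_{\flat, \dagger}\end{pmatrix}, \qquad L_{\sharp, \dagger},\, L_{\flat, \dagger} \in \Lambda_E(\GG_\PP) \otimes \HH_E(\GG_\PPP).
\end{equation*}

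\emph{Step 2 (decomposition at $\PPP$, and assembly).}  Next apply the same LLZ construction at $\PPP$ to the pairs $(L_{\sharp, \alpha_\PPP}, L_{\sharp, \beta_\PPP})$ and $(L_{\flat, \alpha_\PPP}, L_{\flat, \beta_\PPP})$, now using the logarithmic matrix $\Mbarpp$ and basis-change $Q_\PPP$ attached to the $\PPP$-local Wach module of $\FF$. This produces four bounded power series $\Lss, \Lsf, \Lfs, \Lff \in \Lambda_E(G_{p^\infty})$. Concatenating the two matrix identities---left-multiplication by $Q_\PPP^{-1}\Mbarpp$ acting on rows (the $\PPP$-stabilizations), right-multiplication by $(Q_\PP^{-1}\Mbarp)^T$ acting on columns (the $\PP$-stabilizations)---produces the identity of Theorem \ref{mainthm2}.

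\emph{Main obstacle.}  The delicate step is verifying that the intermediate half-signed distributions $L_{\sharp, \dagger}, L_{\flat, \dagger}$ genuinely lie in $\Lambda_E(\GG_\PP) \otimes \HH_E(\GG_\PPP)$, i.e.\ that the growth of $L_{*, \dagger}$ along $\GG_\PP$---of slope $v_p(\alpha_\PP)$ or $v_p(\beta_\PP)$, with $v_p(\alpha_\PP) + v_p(\beta_\PP) = k-1$---is entirely absorbed by $\Mbarp$. This requires isolating the $\PP$-local factor of Williams's interpolation formula for $L_p(\FF^{*,\dagger})$ and checking that the LLZ logarithmic-matrix factorization of the Perrin-Riou map extends uniformly to distribution-valued families, which in turn rests on the compatibility of the Wach module construction with base change. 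Granted this, Step 2 is structurally identical with the roles of $\PP$ and $\PPP$ reversed, and commutativity of the two decompositions is automatic since they act on independent variables.
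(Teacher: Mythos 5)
Your overall architecture is right, and Step 1 matches the paper's Proposition 3.6 fairly closely: you take isotypic components, specialize in the $\PPP$-variable, apply the one-variable factorization (Theorem \ref{thm1}) at $\PP$, and feed the Amice--Velu/Vishik vanishing criterion (Lemma \ref{lem3.5}) back in to deduce divisibility by $\det(Q_\PP^{-1}\Mbarp)$ over the two-variable algebra. That part goes through.

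The gap is in Step 2, where you assert it is ``structurally identical with the roles of $\PP$ and $\PPP$ reversed.'' It is not, and the obstacle you flag (whether the growth of $L_{*,\dagger}$ along $\GG_\PP$ is absorbed by $\Mbarp$) is not the real one; boundedness in the $\PP$-direction is what Proposition \ref{prop3.6} already secures. The real difficulty is that Theorem \ref{thm1} requires as input an \emph{interpolation property}: for the pair $(L_{\sharp,\alpha_\PPP},\ L_{\sharp,\beta_\PPP})$ you would need $L_{\sharp,\dagger}(\tilX) = \dagger^{-n_\PPP}\,C_{\tilX}$ with $C_{\tilX}$ independent of $\dagger$. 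But $L_{\sharp,\dagger}$ is defined as a quotient $(P_{4,\PP}L_{\alpha_\PP,\dagger}-P_{2,\PP}L_{\beta_\PP,\dagger})/\det(Q_\PP^{-1}\Mbarp)$, and at a character $\tilX$ of nontrivial $\PP$-conductor both numerator and denominator vanish; you cannot simply read off the value from Williams's interpolation formula for $L_{*,\dagger}$. The paper resolves this with a separate input: Proposition \ref{prop3.7} shows the partial derivatives $\partial_\PP L_{*,\dagger}$ themselves satisfy an interpolation property of the required shape (with new constants $A_{a,b,\tilX},B_{a,b,\tilX}$), and the proof of Proposition \ref{prop3.8} then extracts the value $L_{\sharp,\dagger}(\tilX)$ by a product-rule/L'H\^opital computation involving $\partial_\PP\det(Q_\PP^{-1}\Mbarp)(\tilX)\neq 0$. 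Without this derivative step, Step 2 does not follow from the one-variable theorem, and ``commutativity because the variables are independent'' does not substitute for it: the $\PP$-decomposition involves division by a function vanishing along a subvariety, and propagating the interpolation formula through that division is precisely the non-trivial content. So your proposal would need to be completed by adding (and proving) the derivative interpolation property before invoking Theorem \ref{thm1} at $\PPP$.
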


In the Appendix \ref{appA}, we have generalized Theorem \ref{thmA} from parallel weight cuspidal Bianchi modular forms to non-parallel $C$-cuspidal Bianchi modular forms. See Theorem \ref{mainthm3}.

\begin{rem}
In \cite{Loefller1}, Loeffler proved a special case of Theorem \ref{thmA}. 
He proved a Pollack style $\pm$-decomposition of two-variable unbounded $p$-adic $L$-functions attached to Bianchi modular forms of parallel weight $0$ with $a_{\qq}=0$, for $\qq\in\pset$. 
Even though the result was stated for Bianchi modular forms arising from the base change of weight $2$ elliptic modular forms to a quadratic imaginary field, the method works for all Bianchi modular forms as long as $a_{\qq}=0$ for $\qq\in\pset$.  
Pollack's distributions $\log^{\pm}_{\PP}, \log^{\pm}_{\PPP}$ over the ray class group $G_{p^\infty}$ for the decomposition were used in Loeffler's proof.
See \cite[Section 5]{Loefller1}.
In this article, we generalize \cite[Corollary 2]{Loefller1} to cuspidal Bianchi modular forms of parallel weight $k\geq 0$ and $v_{p}(a_{\qqq})>0$. We also generalize \cite[Proposition 9]{Loefller1}, \cite[Proposition 2.3, Proposition 2.5]{Lei3} from $k=0$ to $k\geq 0$.
We use a different approach from \cite{Loefller1}.
We construct and use Lei--Loeffler--Zerbes style $2\times 2$ logarithmic matrices (which generalize Pollack's $\pm$-logarithms)  $\Mbarp \text{ and } \Mbarpp$ to decompose $p$-adic $L$-functions with unbounded coefficients.
To construct these logarithmic matrices, we use $p$-adic Hodge theoretic tools.
For example, one of the key ingredients to construct $\Mbar$ is the Wach module basis due to Berger-Li-Zhu (in \cite{BLZ}).
The importance of Berger--Li--Zhu's construction is explained briefly in Section \ref{plan}.

\end{rem}
\subsection{Logarithmic matrices}
In this article, we construct logarithmic matrices in the sense of Sprung and Lei--Loeffler--Zerbes. 
Although we use $p$-adic Hodge theoretic tools to construct these matrices, one can think of them as purely algebraic elements. 
More generally, we construct $\Mbar\in M_{2,2}(\HH_{E}(\GG_1))$, where $\HH_{E}(\GG_1)$ is the distribution algebra over $E$, and $M_{2,2}(R)$ is the space of $2\times 2$ matrices with entries in $R$, having some growth properties. See Section \ref{sec5} for the details.

We do not have much information about $p$-adic Hodge theoretic properties of the Galois representations of Bianchi modular forms. 
If the level of the Bianchi modular form is away from $p$, we expect the corresponding Galois representation to be crystalline at $p$.
Only partial results are known due to Jorza, see \cite{Jorza1, Jorza2}.

We avoid or bypass the use of these conjectural properties of $p$-adic Galois representations associated with Bianchi modular forms by using Berger--Li--Zhu's construction. 
We explain it briefly here.
Since we have assumed $p$ splits as $(p)=\PP\PPP$ in $K$, the ray class group $G_{p^\infty}$ can be decomposed as $\Delta_{K}\times\GG_{\PP}\times\GG_{\PPP}$, where $\Delta_{K}$ is some finite abelian group and $\GG_{\PP}\cong\GG_{\PPP}\cong\Zp$. For $\qq\in\pset$, fix a topological generator $\gamma_{\qq}$ of $\GG_{\qq}$.
For a cuspidal Bianchi modular form $\FF$ of level $\frkm$ away from $p$, weight $(k,k)$ and trivial nebentypus, let $a_{\qq}$ be the $T_{\qq}$-eigenvalue of $\FF$. 
We assume $v_{p}(a_\qq)> \left\lfloor\dfrac{k}{p-2}\right\rfloor$. 
Moreover, let $\alpha_{\qq},\beta_{\qq}$ be the distinct roots of Hecke polynomial $X^{2}-a_{\PP}X+p^{k+1}$.
Then we construct a logarithmic matrix $\underline{M(\qq)}\in M_{2,2}(\GG_1)$, and then using \emph{change of variables}, i.e. by changing $\gamma_{0}$ with $\gamma_{\qq}$, we construct another matrix $\underline{M_{\qq}}\in M_{2,2}(\HH_{E}(\GG_\qq))$ with appropriate properties.
See Section \ref{modi} for the construction of $\underline{M_{\qq}}$ and Section \ref{twovar} for the details about the change of variable map.
Most importantly, the construction of $\underline{M_\qq}$ \emph{does not depend} on the $p$-adic Galois representation of $\FF$, but it depends only on the roots of Hecke polynomial $X^{2}-a_{\qq}X+p^{k+1}$ and the condition on $v_{p}(a_{\qq})$. Moreover, in Appendix \ref{appA}, we have constructed logarithmic matrices for $k$ and $\ell$, where $k$ need not be equal to $\ell$. 

\subsection{Plan of the article}
\label{plan}
We start with the setup and notations in Section 2 which we require throughout the article.

In Section 3, we recall tools from $p$-adic Hodge theory and Wach modules. 
In general, we look at  
crystalline representations, 
families of Wach modules and the relation between them due to Berger-Li-Zhu \cite{BLZ}.

In Section 4, we recall the exponential map constructed by Perrin-Riou and the $p$-adic regulator map along with Coleman maps, which were introduced by Lei--Loeffler--Zerbes in \cite{LLZ1} and \cite{LLZ2}. 
Moreover, we study the relationship between the exponential map and the $p$-adic regulator map. In this section, we also introduce the logarithm matrix $\Mbar$, which is an element of $M_{2,2}(\HH_{E}(\GG_1))$. 

Section 5 deals with the factorization of power series in one variable.
We first investigate $\Mbar$ in more depth.
Then we prove the following result using $\Mbar$: 
\begin{thmx}[Theorem \ref{thm1}]
\label{thmB}
 Let $E$ be a finite extension of $\Qp$. Let $\alpha, \beta \in \Oe$ and $k\geq 2$ be an integer such that $\alpha\beta= p^{k-1}$. 
 Assume 
 $\alpha \neq \beta$ and $v_{p}(\alpha + \beta) >  \left\lfloor \dfrac{k-2}{p-1} \right\rfloor$. 
For each $\lambda\in\{\alpha,\beta\}$, let $F_{\lambda} \in \HH_{E,v_{p}(\lambda)}(\GG)$, 
  such that for any integer $0\leq j \leq k-2$, and for any Dirichlet character $\omega$ of conductor $p^n$, we have $F_{\lambda}(\chi^{j}\omega)=\lambda^{-n}C_{\omega,j}$, where $\chi$ is the $p$-adic cyclotomic character 
 and $C_{\omega,j}\in \overline{\Qp}$ that is independent of $\lambda$. Then there exist 
 $F_{\flat}, F_{\sharp} \in \Lambda_{E}(\GG)$ such that
    \begin{equation*}
        \begin{pmatrix} F_{\alpha} \\[6pt] F_{\beta} \end{pmatrix} = Q^{-1}\Mbar \begin{pmatrix} F_{\sharp} \\[6pt] F_{\flat} \end{pmatrix}.
    \end{equation*}
    Note that $\Mbar$ depends on $\alpha+\beta$ and $k$, and the matrix $Q$ depends on $\alpha$ and $\beta$.
\end{thmx}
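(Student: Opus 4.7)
The strategy is to invert the matrix equation formally in $\HH_E(\GG)$ and then verify that the resulting quotients are in fact bounded. Since $\alpha \neq \beta$ we have $\det Q = p^{k-1}(\alpha - \beta) \neq 0$, and from the explicit description of $\Mbar$ in Section~4, $\det \Mbar$ is a nonzero element of $\HH_E(\GG)$ (a product of $\log_{p,k}$-type factors). I would therefore define candidate elements
\begin{equation*}
\begin{pmatrix} F_\sharp \\ F_\flat \end{pmatrix} := (Q^{-1}\Mbar)^{-1}\begin{pmatrix} F_\alpha \\ F_\beta \end{pmatrix} \in \HH_E(\GG)^2,
\end{equation*}
so that the identity in the theorem holds by construction. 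The real content is then the integrality claim $F_\sharp, F_\flat \in \Lambda_E(\GG)$.

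For integrality, I would invoke the standard criterion of Pollack and Lei-Loeffler-Zerbes: an element $\mu \in \HH_{E,h}(\GG)$ with $h \leq k-2$ lies in $\Lambda_E(\GG)$ provided its values $\mu(\chi^j \omega)$, for $0 \leq j \leq k-2$ and $\omega$ a Dirichlet character of $p$-power conductor, are uniformly bounded in $\overline{\Qp}$. Using the explicit description of $\Mbar$ from Section~4, one can evaluate $(Q^{-1}\Mbar)^{-1}(\chi^j\omega)$ in terms of the $\vp^n$-iterates on the Berger-Li-Zhu basis. Substituting the hypothesis $F_\lambda(\chi^j\omega) = \lambda^{-n} C_{\omega, j}$ gives
\begin{equation*}
\begin{pmatrix} F_\sharp(\chi^j\omega) \\ F_\flat(\chi^j\omega) \end{pmatrix} = C_{\omega, j}\,(Q^{-1}\Mbar)^{-1}(\chi^j\omega)\begin{pmatrix} \alpha^{-n} \\ \beta^{-n} \end{pmatrix},
\end{equation*}
so the task reduces to controlling this product.

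The main obstacle is proving this right-hand side is bounded uniformly in $(j, \omega, n)$. A priori $(Q^{-1}\Mbar)^{-1}$ carries the growth rates $v_p(\alpha), v_p(\beta)$, while $\alpha^{-n}, \beta^{-n}$ themselves diverge in $n$. The heart of the proof is that the construction of $\Mbar$ via the Berger-Li-Zhu Wach module encodes exactly the Frobenius eigenvalues $\alpha, \beta$, so that the unboundedness of $(Q^{-1}\Mbar)^{-1}(\chi^j\omega)$ cancels exactly against the factors $\alpha^{-n}, \beta^{-n}$, leaving a bounded expression in $C_{\omega, j}$. The relation $\alpha\beta = p^{k-1}$ and the explicit shape of $Q$ are precisely what is needed to effect this cancellation; once it is verified character by character and the resulting bound is seen to be uniform in $(j, \omega)$, the integrality criterion yields $F_\sharp, F_\flat \in \Lambda_E(\GG)$.
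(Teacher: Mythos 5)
There is a genuine gap, and it is in the integrality criterion you invoke. You write that an element $\mu \in \HH_{E,h}(\GG)$ with $h \le k-2$ lies in $\Lambda_E(\GG)$ provided the values $\mu(\chi^j\omega)$, $0\le j\le k-2$, $\omega$ of $p$-power conductor, are uniformly bounded. This is not a standard result and is in fact false. Take $\mu = \log_p(\gamma_0) = \log_p(1+X)$, which lies in $\HH_{E,1}(\GG_1)$ and hence satisfies $h\le k-2$ as soon as $k\ge 3$. For $\omega$ of conductor $p^n$, $\mu(\chi^j\omega) = \log_p(u^j\zeta) = j\log_p(u)$ (since $\log_p$ annihilates roots of unity), and these values are uniformly bounded for $0\le j\le k-2$; yet $\log_p(1+X)$ has coefficients $1, -1/2, 1/3, \dots$ and is certainly not in $\Lambda_E(\GG)$. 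So the step that is supposed to carry the whole proof fails.

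There is a second, related issue you gloss over: $(Q^{-1}\Mbar)^{-1}$ does not exist in $M_{2,2}(\HH_E(\GG))$, because $\det(Q^{-1}\Mbar)$ is a non-unit with infinitely many zeros (it is, up to a unit, $\log_{p,k-1}(\gamma_0)/\delta_{k-1}$). So your candidates $F_\sharp, F_\flat$ a priori live in a localization of $\HH_E(\GG)$, not in $\HH_E(\GG)$ itself, and one must first establish divisibility before one can even speak of their growth. The paper's argument handles both points at once and does so differently from what you propose: it forms $F_1 := P_4F_\alpha - P_2F_\beta$ and $F_2 := -P_3F_\alpha + P_1F_\beta$ (the entries of $\adj(Q^{-1}\Mbar)\cdot(F_\alpha,F_\beta)^T$), uses the explicit shape of $\adj(Q^{-1}\Mbar)(\chi^j\omega)$ coming from Lemma~\ref{lem5} together with the interpolation hypothesis to show $F_1(\chi^j\omega) = F_2(\chi^j\omega) = 0$ for all $0\le j\le k-2$ and all $\omega$ of $p$-power conductor, concludes that $\det(Q^{-1}\Mbar)$ divides $F_1$ and $F_2$ in $\HH_E(\GG)$, and then compares growth rates: $F_1, F_2$ are $O(\log_p^{k-1})$ while $\det(Q^{-1}\Mbar) \sim \log_p^{k-1}$ by Lemma~\ref{5.4}, so the quotients are $O(1)$, i.e.\ lie in $\Lambda_E(\GG)$. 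The crucial content is \emph{vanishing} at the interpolation characters (giving divisibility), not \emph{boundedness} of the values there, and it is precisely this vanishing-then-growth comparison that your proposal is missing.
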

\begin{rem}
 In \cite[Section 2]{BL1}, the authors proved a similar result as above under the Fontaine-Laffaille 
 condition ($p>k$). 
 In this article, we are replacing this condition with a weaker condition $v_{p}(\alpha + \beta) > \left\lfloor \dfrac{k-2}{p-1} \right\rfloor$.
We also use different methods than the methods used in \cite{BL1}. For example, we obtain properties of $\Mbar$ using the $p$-adic regulator $\LT$ and Perrin-Riou's exponential map $\Omega_{T_W}$.  
\end{rem}

\begin{rem}
    Theorem \ref{thmB} can be used in the decomposition of any two power series satisfying certain growth conditions and interpolation properties. 
    In \cite{Deo1}, which is an upcoming work regarding $p$-adic Asai $L$-distributions of $p$-non-ordinary Bianchi modular forms, we can use this method to decompose the distributions into bounded measures.
\end{rem}

In Section 6, we develop the two-variable setup and recall definitions of ray class groups, Hecke characters, etc.

Bianchi modular forms and their $p$-adic $L$-functions are  
briefly recalled in Section 7.

In the last section, we prove the main theorem (Theorem \ref{thmA}) of this article.
We generalize and apply 
results of \cite{Lei3} in 
our setting.

\subsection*{Acknowledgments}
I would like to thank my PhD advisor, Antonio Lei, for suggesting this problem to me, for answering all my questions, and for his patience, guidance, encouragement, and support.
I would like to thank Luis Santiago Palacios for the useful discussions about \cite{LPS2} and clarifications regarding $C$-cuspidal Bianchi modular forms.
I would like to thank my brother Shaunak Deo for helpful mathematical and non-mathematical conversations. 
I would also like to thank Katharina M\"uller for her suggestions on the earlier draft of this article. 
Finally, I would like to thank anonymous referees for their valuable comments and suggestions, which greatly helped in improving the exposition.

\section{Setup and notations}
\label{setup}
Fix an odd prime $p$. 
Let $E$ be a finite extension of $\Qp$ with the ring of integers $\Oe$. 
Let $\alpha, \beta \in \Oe$ 
such that $v_{p}(\alpha+\beta)>0$, 
and there exists $v\in\Oe^\times$ and an integer $k\geq 2$ such that $\alpha\beta= v p^{k-1}$.
We assume that $v^{1/2} \in \Oe^{\times}$.
Denote $a=\alpha+\beta.$
We denote $\Gal(\overline{\Qp}/\Qp)$ by $G_{\Qp}$. 
\begin{assum}
     $v_{p}(a) > m = \left\lfloor \dfrac{k-2}{p-1} \right\rfloor$ and $\alpha \neq \beta$. 
\end{assum}    
We fix $a, \alpha,\beta,$ and $v$ for the rest of the article.

\subsubsection*{Iwasawa algebras} Let $\QQ_{p,n} = \QQ_{p}(\mu_{p^n})$, where $\mu_{p^n}$ is the set of all $p^{n}$-th roots of unity. 
Let $\QQ_{p,\infty} = \bigcup_{n \geq 1} \QQ_{p,n}$.
Then $\GG = \Gal(\QQ_{p,\infty}/\Qp) \cong \Delta \times \ZZ_p$, 
where $\Delta$ is the torsion group of $\GG$ of order $p-1$. Let $\GG_1$ be a subgroup of $\GG$ such that $\GG_1 \cong \GG/\Delta \cong \ZZ_p$.
In other words, $\GG_{1}$ is the Galois group of $\QQ_{p,\infty}$ over $\QQ_{p,1}$.
We denote the Iwasawa algebra $\Oe \otimes_{\Zp} \ZZ_{p}[[\GG]] \cong \Oe[[\GG]]$ over $\Oe$ by $\Lambda_{\Oe}(\GG)$. 
Fix a topological generator $\gamma_0$ of $\GG_1$. Then we can identify $\Oe[[\GG_1]]$ with $\Oe[[X]]$ via identification $\gamma_{0} \mapsto 1+X$. 
This can be extended to $\Lambda_{\Oe}(\GG) \cong \Oe[\Delta][[X]] $.
We further write $\Lambda_{E}(\GG_1) = E\otimes_{\OO_E} \Lambda_{\Oe}(\GG_1)$ and $\Lambda_{E}(\GG) = E\otimes_{\OO_E} \Lambda_{\Oe}(\GG)$.  
Fix a topological generator $u$ of $1+p\ZZ_p$ and let $\chi$ be the $p$-adic 
cyclotomic character on $\GG$ such that $\chi(\gamma_{0}) = u$. 

\subsubsection*{Power series rings} Given any power series $F \in E[[X]]$ and $0 < \rho <1$, we define the sup norm $\lv F \lv_{\rho} = \text{sup}_{\vert z \vert_p \leq \rho} \vert F(z) \vert_{p}$. 
For any real number $r \geq 0$, we define
$$\HH_{r} = \{ F \in E[[X]] : \supt (p^{-tr}\lv F \lv_{\rho_t}) < \infty\},$$
where $\rho_{t}= p^{-1/p^{t-1}(p-1)}$ and $t \geq 1$ is an integer. 
Equivalently, we have
\[\HH_{r}=\left\{ F(X)=\sum_{n\geq 0}c_{n}X^n \in E[[X]] : \sup_{n}\dfrac{|c_{n}|_{p}}{n^r} < \infty\right\}.\]
If $F(X)\in \HH_{r}$, then $F$ is $O(\log_{p}^{r})$, that is 
\[\lv F\lv_{\rho} = O\left(\lv \log_{p}^{r}(1+X))\lv_{\rho}\right)\]
as $\rho\to 1^{-}$.
We write $\HH_{E} = \bigcup_{r\geq 0}\HH_r$. 

We define $\HH_{E,r}(\GG)$ to 
be the set of power series 
$\sum_{n \geq 0,\sigma \in \Delta} c_{n,\sigma}\cdot\sigma\cdot (\gamma_{0}-1)^{n},$
such that $\sum_{n\geq 0}c_{n,\sigma}X^{n} \in \HH_r$ for all $\sigma \in \Delta$. 
In other words, the elements of $\HH_{E,r}(\GG)$ are the power series in $\gamma_{0}-1$ over $E[\Delta]$ with the growth rate $O(\log_{p}^r)$. 
Write $\HH_{E}(\GG) = \bigcup_{r\geq 0}\HH_{E,r}(\GG)$. 
We call $\HH_{E}(\GG)$ \emph{the space of distributions on} $\GG$. 
We can identify $\HH_{E}(\GG)$ with
$$\left\{ F(X)\green{=}\sum_{\substack{n\geq0,\\ \sigma\in\Delta}} c_{n,\sigma}\cdot\sigma\cdot X^n \in E[\Delta][[X]] : \sum_{n\geq 0}c_{n,\sigma}X^n\text{ converges for all }X\in\CC_p \text{ with } |X|_{p} < 1\right\}$$
where $X$ corresponds to $\gamma_{0}-1$. 
\begin{rem}
    In \cite{Kato} and \cite{PR1}, the spaces $\HH_r$ are defined using $\lim_{n} n^{-r}|c_{n}|_p = 0$ instead of $\sup_{n} (n^{-r}|c_{n}|_{p})<\infty$. 
    For example, if $r=0$, then by the notation in \cite{Kato}, $\HH_{0}=E\langle X\rangle$, where $E\langle X \rangle$ is the one-variable Tate algebra. 
    But, in our context, we have the following identification
    \[\HH_{0}= E\otimes_{\OO_E}\OO_{E}[[T]].\]
    We do not need the stronger notion of Tate algebras in this article.
    See also \cite[Lemmas 3.2, 5.2]{Pollack}.
\end{rem}

\subsubsection*{Fontaine's rings} Let $\pi$ be a variable, $\mathbb{A}_{\Qp}^{+} = \ZZ_{p}[[\pi]]$ and $\mathbb{B}^{+}_{\Qp}= \mathbb{A}_{\Qp}^{+}[1/p]$.
Let $\mathbb{A}_{\Qp}$  be the ring of Laurent series $\sum_{i=-\infty}^{+\infty} a_{i}\pi^{i}$ such that $a_{i}\in \Zp$ and $a_{i} \to 0$ as $i\to -\infty$. 
Write $\BR^{+}_{\mathrm{rig},\Qp}$ for the ring of power series $f(\pi) \in \Qp[[\pi]]$ such that $f(X)$ converges everywhere in the open unit $p$-adic disk. 
We equip $\BR^{+}_{\mathrm{rig},\Qp}$ with actions of a Frobenius operator $\vp$ and $\GG$ by $\vp: \pi \mapsto (1+\pi)^{p}-1$ and $\sigma: \pi \mapsto (1+\pi)^{\chi(\sigma)} -1$ for all $\sigma \in \GG$. 
We then write $\BrigE$ for the power series ring $E\otimes\Brig$.
We can define a left inverse $\psi$ of $\vp$ such that 
$$\vp\circ\psi(f(\pi)) = \dfrac{1}{p} \sum_{\zeta^{p} = 1}f(\zeta(1+\pi) - 1).$$
Inside $\BrigE$, we have subrings $\AAA^{+}_{E} = \Oe[[\pi]]$ and $\BR^{+}_{E} = E \otimes \AAA^{+}_{E}$. 
The actions of $\vp, \psi$, and $\GG$ preserve 
these subrings.
Write $t=\log(1+\pi) \in \BrigE$ and $q=\vp(\pi)/\pi \in \AAA^{+}_{E}$.
Note that $\vp(t) = pt$ and $\sigma(t) = \chi(\sigma)t$ for all $\sigma \in \GG$, since $\log(1+\pi)=\pi\displaystyle\prod_{n\geq 1}\dfrac{\vp^{n-1}(q)}{p}$.

\subsubsection*{Mellin transform} We have a $\Lambda_{E}(\GG)$-
module isomorphism between $\HH_{E}(\GG)$ and $(\BrigE)^{\psi=0}$ due to the action of $\GG$ on $\BrigE$, called the Mellin transform.
The isomorphism is given by 
\begin{align*}
    \mathfrak{M}: \HH_{E}(\GG) &\to (\BrigE)^{\psi=0}\\
                   f(\gamma_{0}-1) &\mapsto f(\gamma_{0}-1)\cdot(1+\pi).
\end{align*}
Moreover, $\Lambda_{\Oe}(\GG)$ corresponds to $(\AAA^{+}_{E})^{\psi=0}$ and $\intiwG$ corresponds to $(1+\pi)\vp(\AAA^{+}_{E})$ under $\mathfrak{M}$. 
Let $\HH_{E}(\GG_1) = \{ f(\gamma_{0}-1) : f \in \HH_{E}\}$, then $\HH_{E}(\GG_1)$ corresponds to $(1+\pi)\vp(\BrigE)$. 
See \cite[Section B.2.8]{PR2} for more details.

\section{Crystalline representations and Wach modules}
\label{sec3}
In this section, we recall definitions of crystalline representations and Wach modules. Furthermore, we recall the construction of families of Wach modules from \cite{BLZ}. The primary reference for this section is \cite[Sections 1, 2, and 3]{BLZ}.

\subsection{Crystalline representations}
Let $\mathbb{B}_{\mathrm{crys}}$ be Fontaine's crystalline period ring. 
Recall that we call a $\Qp$-linear $G_{\Qp}$-representation $V$ a crystalline representation if $V$ is $\mathbb{B}_{\mathrm{crys}}$-admissible.
In other words, $V$ is a crystalline representation if the dimension of the filtered $\vp$-module $\Dcris(V)=(\mathbb{B}_{\mathrm{crys}}\otimes V)^{G_{\Qp}}$ is $\text{dim}_{\Qp}V$.
For any integer $j$, we take $\Qp(j)=\Qp\cdot e_{j}$, where $G_{\Qp}$ acts on $e_{j}$ via $\chi^j$. We know that $\Qp(j)$ is a crystalline representation.
Then for any crystalline representation $V$, the representation $V(j)=V(\chi^{j})=V\otimes \Qp(j)$ is again a crystalline representation.
Moreover,we have $\Dcris(V(j)) = t^{-j}\Dcris(V)\otimes e_{j}$.   
We say a crystalline (or more generally a Hodge--Tate) representation $V$ is positive if its Hodge--Tate weights are negative.
Note that we are assuming the $p$-adic cyclotomic character has Hodge--Tate weight $+1$.

Let $E$ be a finite extension of $\Qp$.
We say that an $E$-linear $G_{\Qp}$-representation $V$ is crystalline if and only if the underlying $\Qp$-linear representation is crystalline.
In this case, $\Dcris(V)$ is an $E$-vector space with $E$-linear Frobenius and a filtration of $E$-vector spaces.
More precisely, $\Dcris(V)$ is an admissible $E$-linear filtered $\vp$-module and the functor $V \mapsto \Dcris(V)$ is an equivalence of categories from the category of crystalline $E$-linear representations to the category of admissible $E$-linear filtered $\vp$-modules (see \cite{CF} for more details).

\subsubsection{Crystalline representations as filtered $\vp$-modules}
Let $D_{k,v^{1/2}a}$ be a filtered $\vp$-module given by $D_{k,v^{1/2}a} = Ee_{1} \oplus Ee_{2}$ where:
\begin{align*}
\begin{matrix}
    \begin{cases} \vp(e_{1})= p^{k-1}e_{2} \\ \vp(e_{2})=-e_{1} + (v^{1/2}a) e_2 \end{cases} & \text{and } & 
    \mathrm{Fil}^{i}D_{k,v^{1/2}a}=\begin{cases} D_{k,v^{1/2}a} \text{ if } i \leq 0 \\ Ee_1 \text{ if }1\leq i \leq k-1 \\ 0 \text{ if } i \geq k.\end{cases}
\end{matrix}
\end{align*}
Take $e'_{1}= v^{1/2}e_{1}$ and $e'_{2}=e_2$. 
Thus, $e'_{1}, e'_{2}$ is another $E$-basis of $D_{k,v^{1/2}a}$.
The matrix of $\vp$ with respect to basis $e'_{1}, e'_{2}$ is
$$\tilde{A}_{\vp} = \begin{pmatrix} 0 & -v^{-1/2} \\[6pt] v^{1/2}p^{k-1} & v^{-1/2}a \end{pmatrix}.$$ 

\begin{thm}[Colmez-Fontaine{{\cite{CF}}}, Berger-Li-Zhu{{\cite{BLZ}}}]
\label{thm1.1}
 There exists a crystalline $E$-linear representation $V_{k,v^{1/2}a}$, such that $\Dcris(V^{*}_{k,v^{1/2}a}) = D_{k,v^{1/2}a}$, where $V^{*}_{k,v^{1/2}a} = \mathrm{Hom}(V_{k,v^{1/2}a}, E)$.
\end{thm}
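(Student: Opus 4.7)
The statement is essentially a special case of the Colmez--Fontaine equivalence of categories between crystalline $E$-linear representations of $G_{\Qp}$ and weakly admissible $E$-linear filtered $\vp$-modules. My plan is therefore to reduce the theorem to the verification that $D_{k,v^{1/2}a}$ is weakly admissible, and then to invoke that equivalence (together with duality) to produce the claimed representation.

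First I would compute the global Hodge and Newton invariants of $D := D_{k,v^{1/2}a}$. Reading off the matrix $\tilde{A}_{\vp}$, one finds $\det\vp = p^{k-1}$, hence $t_N(D) = v_{p}(\det \vp) = k-1$. On the filtration side, $\mathrm{Fil}^i D$ equals $D$ for $i\leq 0$, the line $Ee_1$ for $1\leq i\leq k-1$, and $0$ for $i\geq k$, so the Hodge--Tate weights are $0$ and $k-1$, giving $t_H(D) = k-1 = t_N(D)$.

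Next I would check the inequality $t_H(D')\leq t_N(D')$ for every $\vp$-stable $E$-subspace $D'$. The only non-trivial case is $\dim_E D' = 1$, where $D'$ is spanned by a $\vp$-eigenvector. Observe that $Ee_1$ is itself \emph{not} $\vp$-stable, since $\vp(e_1) = p^{k-1}e_2 \notin Ee_1$. Therefore any $\vp$-eigenline $D'$ is distinct from $Ee_1$, so $D'\cap\mathrm{Fil}^{1} D = D'\cap Ee_1 = 0$, forcing $t_H(D')=0$. The $\vp$-eigenvalues are the roots of $X^{2} - v^{1/2}a\,X + p^{k-1}$; since $v_{p}(v^{1/2}a) = v_{p}(a) > 0$ and $v_{p}(p^{k-1}) = k-1 \geq 1$, the Newton polygon shows both roots have strictly positive $p$-adic valuation, hence $t_N(D') > 0 = t_H(D')$.

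Having established weak admissibility, Colmez--Fontaine produces a crystalline $E$-linear representation $W$ with $\Dcris(W)=D_{k,v^{1/2}a}$, and I would then set $V_{k,v^{1/2}a}:=W^{*}$, so that $\Dcris(V_{k,v^{1/2}a}^{*}) = \Dcris(W) = D_{k,v^{1/2}a}$ as required. There is no serious obstacle in the argument: the only mild subtlety is that the characteristic polynomial of $\vp$ may or may not split over $E$, but either possibility is harmless---if it is irreducible over $E$ the subobject inequality is vacuous, while if it splits the Newton polygon bound handles both eigenlines uniformly.
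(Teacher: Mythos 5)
Your argument is correct and essentially self-contained. You reduce the claim to a weak-admissibility check: $t_N(D) = v_p(\det\vp) = v_p(p^{k-1}) = k-1$ equals $t_H(D)$, which is the sum of the filtration jumps $0 + (k-1)$; any proper $\vp$-stable subspace is a $\vp$-eigenline, and since $Ee_1$ is manifestly not $\vp$-stable ($\vp(e_1) = p^{k-1}e_2$, and $k\geq 2$), such a line misses $\Fil^1$ and so has $t_H = 0$, while the Newton polygon of $X^{2}-v^{1/2}aX + p^{k-1}$ (both $v_p(v^{1/2}a) = v_p(a) > 0$ and $v_p(p^{k-1}) = k-1\geq 1$) forces both $\vp$-eigenvalues to have strictly positive valuation, so $t_N > 0$. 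Your handling of the irreducible case is also fine: with no $\vp$-eigenline over $E$, the subobject inequality is vacuous. The dualization at the end correctly delivers $\Dcris(V_{k,v^{1/2}a}^*) = D_{k,v^{1/2}a}$.

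The paper's own proof is simply a citation to \cite[Section I and Proposition 3.2.4]{BLZ}. The BLZ source does perform the same Hodge/Newton admissibility computation you carry out (in its introductory section), so your argument matches the first half of the cited proof. What the BLZ reference additionally supplies---and what the present paper relies on in the sections that follow---is the \emph{explicit} realization of the representation via a Wach module $\NN_{k}(\delta)$, rather than the purely abstract existence obtained from Colmez--Fontaine. Your proof establishes exactly the stated theorem, but a reader should understand that the constructive Wach-module description (not produced by your route) is the reason the paper invokes BLZ rather than appealing to Colmez--Fontaine alone. As a minor remark, your characteristic polynomial $X^{2} - v^{1/2}aX + p^{k-1}$ is the one obtained from the matrix of $\vp$ in the basis $e_1,e_2$; the displayed matrix $\tilde{A}_{\vp}$ in the paper appears to contain a typo ($v^{-1/2}a$ in place of $v^{1/2}a$ in the bottom-right entry), but since $v^{1/2}$ is a unit this has no effect on the valuation argument.
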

\begin{proof}
See \cite[Section I and Proposition 3.2.4]{BLZ}
\end{proof}
From the above theorem, we get
\[\Dcris(V^{*}_{k,v^{1/2}a}) = Ee_{1} \oplus Ee_{2} = Ee'_{1} \oplus Ee'_{2}.\]
The Hodge--Tate weights of $V_{k,v^{1/2}a}$ are $0$ and $k-1$, and thus the Hodge--Tate weights of $V^{*}_{k,v^{1/2}a}$ are $0$ and $1-k$. 
Let $W=V^{*}_{k,v^{1/2}a}(\chi^{k-1}\otimes\eta)$, where 
$\eta:G_{\Qp} \to \overline{\Qp}^{\times}$ is an unramified character such that $\eta(\text{Frob}_p)=v^{1/2}$. 
Therefore $W$ is a crystalline representation with Hodge--Tate weights $0$ and $k-1$. 

Let $w_{i} = e'_{i}\otimes t^{-(k-1)}e_{k-1}\otimes e_{\eta}$, for $i =1,2$, where $e_{\eta}$ is a basis of $\Qp(\eta)$ and the action of $\vp$ on $e_\eta$ is given by $\vp(e_{\eta}) = \eta(\text{Frob}^{-1}_p)e_{\eta}$. 
Then $w_{1}, w_{2}$ is a basis of $\Dcris(W)$.
The action of $\vp$ on $w_i$ can be calculated as
\begin{align*}
\begin{cases}
\vp(w_{1})= w_2 \\ \vp(w_{2})=(-1/v p^{k-1})w_{1} + (a/v p^{k-1}) w_2 .
\end{cases}
\end{align*}
Thus, the matrix of $\vp$ with respect to basis $w_{1}, w_{2}$ is $$A_{\vp}=\begin{pmatrix} 0 & \dfrac{-1}{vp^{k-1}} \\[12pt]  1 & \dfrac{a}{vp^{k-1}}\end{pmatrix}.$$

\subsection{Wach modules}
An \'etale $(\vp, \GG)$-module over $\mathbb{A}_{\Qp}$ is a free $\mathbb{A}_{\Qp}$- module $M$ of finite rank, with semilinear action of $\vp$ and a continuous action of $\GG$ commuting with each other, such that $\vp(M)$ generates $M$ as an $\mathbb{A}_{\Qp}$-module.
In \cite[A.3.4]{Fon1}, Fontaine has constructed a functor $T \mapsto \mathbb{D}(T)$ which associates to every $\Zp$-linear representation an \'etale $(\vp,\GG)$-module over $\mathbb{A}_{\Qp}$.
The $(\vp,\GG)$-module $\mathbb{D}(T)$ is defined as $(\mathbb{A}\otimes T)^{H_{\Qp}}$, where $\mathbb{A}$ is the ring defined in \cite{Fon1} and $H_{\Qp}=\Gal(\overline{\Qp}/\QQ_{p,\infty})$.
He also shows that the functor $T \mapsto \mathbb{D}(T)$ is an equivalence of categories.
By inverting $p$, we also get an equivalence of categories between the category of $\Qp$-linear $G_{\Qp}$-representations and the category of \'etale $(\vp,\GG)$-modules over $\mathbb{B}_{\Qp}=\mathbb{A}_{\Qp}[p^{-1}]$. 

If $E$ is a finite extension of $\Qp$, we extend the Frobenius and the action of $\GG$ to $E\otimes\mathbb{B}_{\Qp}$ by $E$-linearity.
We then get an equivalence of categories between $\Oe$-modules (or $E$-linear $G_{\Qp}$-representations) and the category of $(\vp,\GG)$-modules over $\Oe\otimes\mathbb{A}_{\Qp}$ (or over $E\otimes\mathbb{B}_{\Qp}$), given by $T\mapsto \mathbb{D}(T)$. 

In \cite{BER1}, Berger shows that if $V$ is an $E$-linear $G_{\Qp}$-representation, then $V$ is crystalline with Hodge--Tate weights in $[a,b]$ if and only if there exists a unique $E\otimes \mathbb{B}^{+}_{\Qp}$-module $\NN(V) \subset \mathbb{D}(V)$ such that:
\begin{enumerate}
    \item $\NN(V)$ is free of rank $d=\dim_{E}(V)$ over $E\otimes\mathbb{B}^{+}_{\Qp}$;
    \item The action of $\GG$ preserves $\NN(V)$ and is trivial on $\NN(V)/\pi\NN(V)$;
    \item $\vp(\pi^{b}\NN(V)) \subset \pi^{b}\NN(V)$ and $\pi^{b}\NN(V)/\vp^{*}(\pi^{b}\NN(V))$ is killed by $q^{b-a},$ where $q=\frac{\vp(\pi)}{\pi}$ and $\vp^{*}(\pi^{b}(\NN(V)))$ is $\mathbb{B}^{+}_{E}$-submodule of $\NN(V)[\pi^{-1}]$ generated by $\vp(\pi^{b}\NN(V))$.
\end{enumerate}
Moreover, if $V$ is crystalline and positive, then we can take $b=0$. 
In this case, $\NN(V)/\pi\NN(V)$ is a filtered $E$-module and there exists an isomorphism $\NN(V)/\pi\NN(V) \cong \Dcris(V)$ as filtered $\vp$-modules.
See \cite[Section III.4]{BER1} 
for more details. 

Let $T$ be a $G_{\Qp}$-stable lattice in $V$. Then $\NN(T)\coloneqq\mathbb{D}(T) \cap \NN(V)$ is an $\OO_{E}\otimes \mathbb{A}^{+}_{\Qp}$-lattice in $\NN(V)$. 
By \cite{BER1}, the functor $T \mapsto \NN(T)$ gives a bijection between the $G_{\Qp}$-stable lattices $T$ in $V$ and the $\OO_{E}\otimes \mathbb{A}^{+}_{\Qp}$-lattices in $\NN(V)$ which satisfy
\begin{enumerate}
     \item $\NN(T)$ is free of rank $d=\dim_{E}(V)$ over $\OO_{E}\otimes\mathbb{A}^{+}_{\Qp}$;
    \item The action of $\GG$ preserves $\NN(T)$;
    \item $\vp(\pi^{b}\NN(T)) \subset \pi^{b}\NN(T)$ and $\pi^{b}\NN(T)/\vp^{*}(\pi^{b}\NN(T))$ is killed by $q^{b-a}$, where $\vp^{*}(\pi^{b}(\NN(T)))$ is $\mathbb{A}^{+}_{E}$-submodule of $\NN(T)[\pi^{-1}]$ generated by $\vp(\pi^{b}\NN(T))$. 
\end{enumerate}
The $E\otimes\mathbb{B}^{+}_{\Qp}$-module $\NN(V)\subset \mathbb{D}(V)$ as well as $\mathcal{O}_{E}\otimes \mathbb{A}^{+}_{\Qp}$-module $\NN(T)\subset \mathbb{D}(T)$ are called \emph{Wach modules}.

\subsubsection{Families of Wach modules}
\label{secfam}
In this section, we recall some results from \cite{BLZ}.

Recall $q=\vp(\pi)/\pi$. 
We define $\lambda_{+}$ and $\lambda_{-}$ as
$$\lambda_{+} = \prod_{n\geq 0} \dfrac{\vp^{2n+1}(q)}{p}$$
and
$$\lambda_{-} = \prod_{n\geq 0} \dfrac{\vp^{2n}(q)}{p}.$$

\begin{lem}
\label{lem1.2}
Write $p^{m}(\lambda_{-}/\lambda_{+})^{k-1} = \sum_{i\geq 0} z_{i}\pi^{i}$, where $m= \left\lfloor \dfrac{k-2}{p-1} \right\rfloor$ and define $z = z_{0} + z_{1}\pi + \cdots + z_{k-2}\pi^{k-2}$. 
Then $z\in \ZZ_{p}[[\pi]]$. 
\end{lem}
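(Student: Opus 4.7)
The plan is to reduce the lemma to a $p$-adic valuation bound for the coefficients of $f := \lambda_-/\lambda_+$, prove that bound by induction using a functional equation, and then transfer the bound to $f^{k-1}$ by a multinomial expansion.

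The starting point is the identity $f \cdot \vp(f) = q/p$, which follows from two elementary observations about the defining infinite products: the re-indexing relation $\lambda_+ = \vp(\lambda_-)$, and the extraction of the $n=0$ factor $\lambda_- = (q/p)\vp(\lambda_+)$. Writing $f = 1 + \sum_{J \geq 1} f_J \pi^J$ and expanding $\vp(f) = \sum_s f_s \pi^s q^s$ via $\vp(\pi) = \pi q$, a comparison of the coefficient of $\pi^J$ on each side of $f\vp(f) = q/p$ produces a recursion of the form
\[
(1 + p^J)\, f_J = c_J - S_J,
\]
where $c_J$ denotes the coefficient of $\pi^J$ in $q/p$, and $S_J$ is a $\Zp$-bilinear expression in the lower coefficients $f_0, \dots, f_{J-1}$, with weights coming from the coefficients of $q^s$ for $s \leq J-1$.

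The main claim is that $v_p(f_J) \geq -\lfloor J/(p-1) \rfloor$ for every $J \geq 0$, which we would establish by induction on $J$. From the explicit formula $q = p + \binom{p}{2}\pi + \cdots + \pi^{p-1}$, two estimates are immediate: first, $v_p(c_J) \geq -\lfloor J/(p-1) \rfloor$, since the only non-integral coefficient of $q/p$ is $1/p$ at degree $p-1$; and second, the coefficient of $\pi^l$ in $q^s$ satisfies $v_p \geq s - \lfloor l/(p-1) \rfloor$, since in the multinomial expansion of $q^s$ each chosen $\pi^{p-1}$-factor loses exactly one power of $p$. Combining these with the inductive hypothesis on $f_0, \dots, f_{J-1}$ and the floor-sum inequality $\lfloor a/(p-1) \rfloor + \lfloor b/(p-1) \rfloor \leq \lfloor (a+b)/(p-1) \rfloor$ yields $v_p(S_J) \geq 1 - \lfloor J/(p-1) \rfloor$, and since $1 + p^J$ is a $p$-adic unit, the claim follows.

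To conclude, a multinomial expansion combined with the same floor-sum inequality gives the coefficient of $\pi^J$ in $f^{k-1}$ having $v_p \geq -\lfloor J/(p-1) \rfloor$. For $J \leq k-2$, $\lfloor J/(p-1) \rfloor \leq m$, so the $J$-th coefficient of $p^m f^{k-1}$ lies in $\Zp$; hence $z = z_0 + z_1\pi + \cdots + z_{k-2}\pi^{k-2} \in \Zp[\pi] \subset \Zp[[\pi]]$, as required. The main technical obstacle is the combinatorial bookkeeping needed to isolate $f_J$ from the many bilinear terms in $S_J$ and verify the valuation bound uniformly; the exponent $m$ is sharp, matching the worst-case denominator of the degree-$(k-2)$ coefficient of $f^{k-1}$, and the argument breaks at degree $J = k-1$ precisely when $(p-1) \mid (k-1)$, which is consistent with the truncation in the lemma's statement.
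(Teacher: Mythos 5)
Your proposal is mathematically correct, and it supplies a full argument where the paper itself only cites \cite[Proposition 3.1.1]{BLZ}. The functional equation $f\,\vp(f) = q/p$ for $f = \lambda_-/\lambda_+$ is exactly right: $\vp(\lambda_-)=\lambda_+$ by shifting the index, and $\lambda_- = (q/p)\vp(\lambda_+)$ by peeling off the $n=0$ factor, and these combine as you describe. The comparison of $\pi^J$-coefficients does isolate $(1+p^J)f_J$, because the only two triples $(i,s,l)$ with $i+s+l=J$ and $\max(i,s)=J$ are $(J,0,0)$ and $(0,J,0)$, contributing $f_J$ and $p^J f_J$; every remaining term has $s\geq 1$ (since $s=0$ forces $l=0$ and $i=J$, which is excluded). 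The valuation estimates are sound: the coefficients of $q$ have $v_p=1$ except for the top one $\pi^{p-1}$ with $v_p=0$, which gives $v_p(Q_{s,l})\geq s-\lfloor l/(p-1)\rfloor$, and the coefficients of $q/p$ satisfy $v_p(c_J)\geq -\lfloor J/(p-1)\rfloor$ with equality only at $J=p-1$. Combined with the superadditivity $\lfloor a/(p-1)\rfloor + \lfloor b/(p-1)\rfloor \leq \lfloor (a+b)/(p-1)\rfloor$ and the inductive hypothesis, this gives $v_p(S_J)\geq 1-\lfloor J/(p-1)\rfloor$, and since $1+p^J$ is a unit, $v_p(f_J)\geq -\lfloor J/(p-1)\rfloor$. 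The passage to $f^{k-1}$ and the truncation at degree $k-2$ then follow as you state. In short, your route is a self-contained reconstruction of the Berger--Li--Zhu bound, replacing the paper's bare citation with the actual mechanism: a $\vp$-functional equation plus a Newton-polygon style induction on coefficient valuations. The only cosmetic point worth flagging is that the extra $+1$ in your bound on $v_p(S_J)$ is not needed (the weaker bound $-\lfloor J/(p-1)\rfloor$ already suffices once you observe $1+p^J$ is a unit), but it does no harm and your sharpness remark at degree $k-1$ is a reasonable sanity check on why the truncation stops where it does.
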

\begin{proof}
See \cite[Proposition 3.1.1]{BLZ}. 
\end{proof}

Let $Y$ be a variable.
Define a matrix
$$P(Y) = \begin{pmatrix} 0  & -v^{-1/2} \\[6pt] v^{1/2}q^{k-1} & Yv^{-1/2}z\end{pmatrix}.$$
Then by \cite[Proposition 3.1.3]{BLZ}, for $\gamma \in \GG$, there exists a matrix $G_{\gamma}(Y) \in I_{2} + \pi M_{2,2}(\ZZ_p[[\pi, Y]])$ such that 
$$P(Y)\vp(G_{\gamma}(Y)) = G_{\gamma}(Y)\gamma(P(Y)).$$
Note that $\vp$ and $\gamma\in\GG$ acts trivially on $Y$. 

\begin{lem}
\label{lem1.3}
For $\delta= \dfrac{a}{p^m}$ and $\gamma, \gamma' \in \GG$, we have $G_{\gamma\gamma'}(\delta) = G_{\gamma}(\delta)\gamma'(G_{\gamma'}(\delta))$ and $P(\delta)\vp(G_{\gamma}(\delta))=G_{\gamma}(\delta)\gamma(P(\delta))$. 
Therefore, one can use the matrices $P(\delta)$ and $G_{\gamma}(\delta)$ to define a Wach module $\mathbb{N}_{k}(\delta)$ over $\Oe[[\pi]]$.
\end{lem}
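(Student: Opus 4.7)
The plan is to deduce both identities from the universal relation $P(Y)\vp(G_{\gamma}(Y)) = G_{\gamma}(Y)\gamma(P(Y))$ of \cite[Proposition 3.1.3]{BLZ} by specializing $Y = \delta$, combined with a uniqueness argument to obtain the cocycle. The standing assumption $v_{p}(a) > m$ is crucial: it gives $v_{p}(\delta) = v_{p}(a) - m > 0$, so $\delta$ lies in the maximal ideal of $\Oe$, and consequently the substitution $Y \mapsto \delta$ sends every element of $\ZZ_{p}[[\pi, Y]]$ to a well-defined element of $\Oe[[\pi]]$. In particular $P(\delta), G_{\gamma}(\delta) \in M_{2,2}(\Oe[[\pi]])$, and $G_{\gamma}(\delta) - I_{2} \in \pi M_{2,2}(\Oe[[\pi]])$.

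The second identity $P(\delta)\vp(G_{\gamma}(\delta)) = G_{\gamma}(\delta)\gamma(P(\delta))$ is then immediate by specialization, since $\vp$ and $\gamma$ act trivially on the constant $\delta$ and thus commute with the map $Y \mapsto \delta$. For the cocycle identity, I would first establish the formal analogue
\begin{equation*}
G_{\gamma\gamma'}(Y) = G_{\gamma}(Y)\gamma(G_{\gamma'}(Y)) \quad \text{in } I_{2} + \pi M_{2,2}(\ZZ_{p}[[\pi, Y]])
\end{equation*}
and then specialize. Set $H(Y) := G_{\gamma}(Y)\gamma(G_{\gamma'}(Y))$. Since $\gamma(\pi) \in \pi\ZZ_{p}[[\pi]]$, both factors reduce to $I_{2}$ modulo $\pi$, so $H(Y) \in I_{2} + \pi M_{2,2}(\ZZ_{p}[[\pi, Y]])$. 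A direct calculation, inserting the BLZ relation for $\gamma$ and then for $\gamma'$ and using the commutation $\vp\gamma = \gamma\vp$, shows that $P(Y)\vp(H(Y)) = H(Y)(\gamma\gamma')(P(Y))$. Hence $H(Y)$ satisfies the same defining properties as $G_{\gamma\gamma'}(Y)$; uniqueness of such a matrix (which one extracts from \cite[Proposition 3.1.3]{BLZ}, or proves by comparing coefficients of $\pi^{n}$ on both sides recursively in $n$, noting that $q \equiv p$ modulo $\pi$ so the inductive step is solvable) then forces $H(Y) = G_{\gamma\gamma'}(Y)$.

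With both identities in hand, the Wach module $\NN_{k}(\delta)$ is defined as the free rank-two $\Oe[[\pi]]$-module on a basis $(n_{1}, n_{2})$, with $\vp$ acting via $P(\delta)$ and $\gamma \in \GG$ via $G_{\gamma}(\delta)$. The second identity ensures compatibility of $\vp$ and $\GG$, the cocycle identity makes $\gamma \mapsto G_{\gamma}(\delta)$ a genuine semilinear action, and the remaining Wach-module axioms---triviality of the $\GG$-action modulo $\pi$, together with $\vp(\pi^{k-1}\NN_{k}(\delta)) \subset \pi^{k-1}\NN_{k}(\delta)$ and cokernel killed by $q^{k-1}$---follow respectively from $G_{\gamma}(\delta) \equiv I_{2} \pmod{\pi}$ and from the shape of $P(\delta)$, whose $(2,1)$ entry carries the factor $q^{k-1}$. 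The main obstacle I anticipate is formalizing the uniqueness step needed for the cocycle: the BLZ statement recorded above asserts only existence of $G_{\gamma}(Y)$, so if uniqueness is not made explicit there, one would need to establish it via the recursive coefficient argument sketched above. Once that is settled, all remaining steps are routine matrix manipulations together with bookkeeping of the specialization $Y \mapsto \delta$.
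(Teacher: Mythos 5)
Your proposal is correct and, in spirit, is the same argument the paper has in mind: the paper's own proof is a one-line citation to \cite[Proposition 3.2.1]{BLZ}, and you are essentially unpacking what that citation covers. A few remarks on the details you raise.

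First, your observation that $v_{p}(\delta)=v_{p}(a)-m>0$ is exactly what makes the specialization $Y\mapsto\delta$ land in $\Oe[[\pi]]$; this is the point of Assumption 2.1 and you identify it correctly. Second, the cocycle computation for $H(Y)=G_{\gamma}(Y)\gamma(G_{\gamma'}(Y))$ is right: inserting the defining relation for $\gamma$, applying $\gamma$ to the relation for $\gamma'$, and using $\vp\gamma=\gamma\vp$ gives $P(Y)\vp(H(Y))=H(Y)(\gamma\gamma')(P(Y))$, and the congruence $H(Y)\equiv I_{2}\pmod\pi$ follows since $\gamma(\pi)\in\pi\ZZ_{p}[[\pi]]$.

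On the uniqueness step, which you flag as the potential obstacle: this is not actually a gap, because the uniqueness of $G_{\gamma}(Y)$ subject to $G_{\gamma}(Y)\in I_{2}+\pi M_{2,2}(\ZZ_{p}[[\pi,Y]])$ is part of the statement of \cite[Proposition 3.1.3]{BLZ}, not just its proof (the present paper records only the existence half, which is what caused your hesitation). Your recursive fallback argument is also sound in outline: writing $D=G-G'$, comparing lowest-order $\pi$-coefficients in $P\vp(D)=D\gamma(P)$ reduces to showing $p^{n}P(0)\,d_{n}=d_{n}P(0)$ forces $d_{n}=0$ for $n\geq1$, which holds because the eigenvalues $\lambda_{1},\lambda_{2}$ of $P(0)$ satisfy $\lambda_{1}\lambda_{2}=p^{k-1}\neq0$ and $p^{n}\lambda_{i}\neq\lambda_{j}$ for $n\geq1$. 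So either route closes the argument.

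The one genuine slip is in the final paragraph, in the statement of the Frobenius axiom you verify. Since $E\otimes\NN_{k}(\delta)=\NN(V_{k,v^{1/2}a}^{*})$ and $V_{k,v^{1/2}a}^{*}$ is positive with Hodge--Tate weights $\{1-k,0\}$, one takes $b=0$ in Berger's characterization: the conditions are $\vp(\NN_{k}(\delta))\subset\NN_{k}(\delta)$ and $\NN_{k}(\delta)/\vp^{*}(\NN_{k}(\delta))$ killed by $q^{k-1}$, not the version twisted by $\pi^{k-1}$ that you wrote. These hold immediately because $P(\delta)\in M_{2,2}(\Oe[[\pi]])$ and $\det P(\delta)=q^{k-1}$. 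The mechanism is what you describe, but the exponent of $\pi$ should be zero.
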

\begin{proof}
Define the free $\Oe[[\pi]]$-module of rank $2$ with basis $n_1, n_2$ as: $\NN_{k}(\delta) = \Oe[[\pi]]n_1 \oplus \Oe[[\pi]]n_2$. 
Endow it with Frobenius $\vp$ and an action of $\gamma \in \Gamma$ such that the matrix of $\vp$ with respect to the basis $n_{1}, n_{2}$ is $P(\delta) = \begin{pmatrix} 0  & -v^{-1/2} \\[6pt] v^{1/2}q^{k-1} & v^{-1/2}\delta\cdot z\end{pmatrix}$ and the matrix of $\gamma$ is $G_{\gamma}(\delta)$. 
See \cite[Proposition 3.2.1]{BLZ} 
for details. 
\end{proof}

The above lemma implies $E\otimes\NN_{k}(\delta)=\NN(V^{*}_{k,v^{1/2}a})$, where $\delta=a/p^m$, and $V_{k,v^{1/2}a}$ is the crystalline $E$-linear representation in Theorem \ref{thm1.1}.
Here $\NN(V^{*}_{k,v^{1/2}a})$ is the Wach module associated to the crystalline representation $V^{*}_{k,v^{1/2}a}$. More precisely:

\begin{thm}
\label{thm1.4}
    The filtered $\vp$-module $E\otimes_{\Oe}(\NN_{k}(\delta)/\pi\NN_{k}(\delta))$ is isomorphic to the $\vp$-module $D_{k,v^{1/2}a}$ defined in the Theorem \ref{thm1.1}. 
\end{thm}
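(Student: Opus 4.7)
The plan is to combine two ingredients: the identification $E\otimes_{\Oe}\NN_{k}(\delta) = \NN(V^{*}_{k,v^{1/2}a})$ noted immediately after Lemma \ref{lem1.3}, and Berger's comparison theorem recalled in Section 3.2, which for a positive crystalline $E$-linear representation $V$ produces a canonical isomorphism of filtered $\vp$-modules $\NN(V)/\pi\NN(V)\cong\Dcris(V)$. Since $V_{k,v^{1/2}a}$ has Hodge-Tate weights $0$ and $k-1$, its dual $V^{*}_{k,v^{1/2}a}$ is positive in the sense of the paper, so Berger's theorem applies to it and transports the filtration on $D_{k,v^{1/2}a}=\Dcris(V^{*}_{k,v^{1/2}a})$ to $\NN(V^{*}_{k,v^{1/2}a})/\pi\NN(V^{*}_{k,v^{1/2}a})$.

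To unwind this at the level of the explicit bases $n_{1},n_{2}$ and $e'_{1},e'_{2}$, I would first check directly that the matrix $P(\delta)$ reduces modulo $\pi$ to the Frobenius matrix $\tilde{A}_{\vp}$ of $D_{k,v^{1/2}a}$ in the basis $e'_{1},e'_{2}$. There are two easy computations. First, $q=\vp(\pi)/\pi=\sum_{j=1}^{p}\binom{p}{j}\pi^{j-1}$ satisfies $q\equiv p\pmod{\pi}$, so that $v^{1/2}q^{k-1}\equiv v^{1/2}p^{k-1}\pmod{\pi}$. Second, because $\vp$ sends $\pi$ into $\pi\Zp[[\pi]]$, one has $\vp^{n}(q)\equiv p\pmod{\pi}$ for every $n\geq 0$; this forces $\lambda_{+}\equiv\lambda_{-}\equiv 1\pmod{\pi}$, whence $p^{m}(\lambda_{-}/\lambda_{+})^{k-1}\equiv p^{m}\pmod{\pi}$, so the element $z$ of Lemma \ref{lem1.2} satisfies $z\equiv p^{m}\pmod{\pi}$. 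Substituting $\delta=a/p^{m}$ gives $v^{-1/2}\delta z\equiv v^{-1/2}a\pmod{\pi}$, which is exactly the $(2,2)$ entry of $\tilde{A}_{\vp}$.

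Putting these together, the $\Oe$-linear map $\NN_{k}(\delta)/\pi\NN_{k}(\delta)\to D_{k,v^{1/2}a}$ sending $n_{i}\mod\pi\mapsto e'_{i}$ is a $\vp$-equivariant isomorphism after extending scalars to $E$. By Lemma \ref{lem1.3} (and the remark following it), this map coincides with Berger's comparison isomorphism $\NN(V^{*}_{k,v^{1/2}a})/\pi\NN(V^{*}_{k,v^{1/2}a})\cong\Dcris(V^{*}_{k,v^{1/2}a})$; the latter respects filtrations, so the two filtered $\vp$-module structures on both sides agree.

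The only real point requiring some attention is the reduction $z\equiv p^{m}\pmod{\pi}$, which rests on Lemma \ref{lem1.2} and the observation that $\vp^{n}(q)$ has constant term $p$ for all $n$. Once that is in hand, Theorem \ref{thm1.4} is essentially a packaging of Berger's theorem with the identification from Lemma \ref{lem1.3}, so no additional input is needed.
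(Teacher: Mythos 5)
The paper's own proof is a bare citation of \cite[Proposition 3.2.4]{BLZ}, so your unpacking is more informative than what appears in print, and your explicit reduction $P(\delta)\equiv\tilde{A}_{\vp}\pmod{\pi}$ (via $q\equiv p$, $\vp^{n}(q)\equiv p$, $\lambda_{\pm}\equiv 1$, hence $z\equiv p^{m}$, hence $v^{-1/2}\delta z\equiv v^{-1/2}a$) is correct and is precisely the $\vp$-level verification underlying that proposition. That part of your proposal is exactly right.

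There is, however, a circularity in the way you structure the argument. You take as input the identification $E\otimes_{\Oe}\NN_{k}(\delta)=\NN(V^{*}_{k,v^{1/2}a})$ ``noted after Lemma~\ref{lem1.3}'' and then apply Berger's comparison to $V^{*}_{k,v^{1/2}a}$ to get the filtered isomorphism. But in the source \cite{BLZ}, that identification is not independently available: one first constructs the Wach module $\NN_{k}(\delta)$, which by Berger's equivalence corresponds to \emph{some} crystalline $V$; one then computes $\NN(V)/\pi\NN(V)$ explicitly and shows it equals $D_{k,v^{1/2}a}$ as a filtered $\vp$-module; only \emph{then}, invoking the Colmez--Fontaine equivalence, does one conclude $V=V^{*}_{k,v^{1/2}a}$. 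In other words, the content of Theorem~\ref{thm1.4} is logically prior to the identification you want to use as input, and the paper's ordering merely obscures this. Your explicit matrix computation is the correct non-circular route, but it only treats the Frobenius; you also need to compute the filtration on $E\otimes(\NN_{k}(\delta)/\pi\NN_{k}(\delta))$ directly (via the $q$-adic filtration on the Wach module, as in \cite[Prop.~3.2.4]{BLZ}) and match it with $\mathrm{Fil}^{i}D_{k,v^{1/2}a}$, rather than deferring the filtration to Berger's theorem applied to $V^{*}_{k,v^{1/2}a}$ --- since that deferral is exactly where the circular step hides.
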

\begin{proof}
This can be proved using \cite[Proposition 3.2.4]{BLZ}. 
\end{proof}

We adapt the above machinery in our setting.
Recall that $W=V^{*}_{k,v^{1/2}a}(\chi^{k-1}\otimes \eta)$. 
Denote by $T_W$ the $\Oe$-lattice $T(\chi^{k-1}\otimes \eta)$ in $W$, 
where $T$ is an $\Oe$-lattice in $V_{k,v^{1/2}a}$ such that $\NN_{k}(\delta)= \NN(T^{*})$. 
By an abuse of notation, we write $\NN_{k}(\delta) = \NN(T_W) = \Oe[[\pi]]n'_{1} \oplus \Oe[[\pi]]n'_{2}$, where $n'_{1}, n'_{2}$ is a basis after twisting the basis $n_{1},n_{2}$ of $\NN(T)$ with $\chi^{k-1}\otimes \eta$.  
Then the matrix of $\vp$ with respect to $\{n'_{1}, n'_{2}\}$ is 
$$P= \begin{pmatrix} 0 & -1/vq^{k-1} \\[6pt] 1 & \delta\cdot z/vq^{k-1} \end{pmatrix}.$$
Note that $P \equiv A_{\vp} \modd \pi,$
since $q \equiv p \modd \pi$ and $\delta\cdot z \equiv a \modd \pi$.
We fix this $\Oe[[\pi]]$-basis $n'_{1},n'_{2}$ for $\NN(T_{W})$ for the rest of the article.


\section{Perrin-Riou's Exponential map, Coleman maps, and the $p$-adic regulator}
\label{sec4}
We recall definitions of Perrin-Rious's exponential map, the $p$-adic regulator, and Coleman maps.
We also explicitly study these maps and the relationship between them after fixing some basis. The primary reference for this section is \cite{LLZ2}.

\subsection{Iwasawa cohomology and Wach modules}
Let $V$ be any crystalline $E$-linear representation of $G_{\Qp}$ and let $T$ be an $\Oe$-lattice inside $V$. 
The Iwasawa cohomology group $H^{1}_{\mathrm{Iw}}(\Qp, T)$ is defined by
$$H^{1}_{\mathrm{Iw}}(\Qp,T) = \varprojlim H^{1}(\QQ_{p,n},T),$$
where the inverse limit is taken with respect to the corestriction maps.
Then, due to Fontaine (see \cite[Section II.1]{CC}), there exists a canonical $\Lambda_{\Oe}(\GG)$-module isomorphism
\begin{equation*}
    h^{1}_{\mathrm{Iw}}\colon \mathbb{D}(T)^{\psi=1} \to H^{1}_{\mathrm{Iw}}(\Qp,T), \label{1}
\end{equation*}
where $\mathbb{D}(T)$ is the $(\vp,\Gamma)$-module associated to $T$. 

From now on, we consider the $p$-adic representation $W=V^{*}_{k, v^{1/2}a}(\chi^{k-1}\otimes\eta)$ and the $\OO_E$-lattice $T_W$ in $W$ studied in Section \ref{secfam} unless mentioned otherwise. 
Moreover, let $\Dcris(T_W)$ be the image of $\NN(T_W)/\pi\NN(T_W)$ in $\Dcris(W).$ 
Then  \begin{enumerate}
    \item $\Dcris(T_W)$ is filtered $\vp$-module over $\Oe$, 
    \item $\Dcris(T_W)= \Oe\cdot w_1 \oplus \Oe\cdot w_2$,
    \item the matrix of $\vp$ with respect to the basis $w_1, w_2$ is $A_{\vp}$. 
\end{enumerate}

For the representation $W$, the eigenvalues of the $\vp$ are $\alpha^{-1}, \beta^{-1}$. 
From now on we assume that $\alpha^{-1}$ and $\beta^{-1}$ are not integral powers of the prime $p$. 
Since the Hodge--Tate weights of $W$ are $0$ and $k-1$, we have the following theorem due to Berger: 

\begin{thm}[Berger, {{\cite[Theorem A.3]{BER2}}}]
\label{thm1.5}
    For the $G_{\Qp}$-stable $\Oe$-lattice $T_W$ in $W$, there exists a $\Lambda_{\Oe}(\GG)$-module isomorphism 
     \[
     h^{1}_{\mathrm{Iw},T_W}\colon \NN(T_W)^{\psi=1} \to H^{1}_{\mathrm{Iw}}(\Qp,T_{W}).
     \]   
    Moreover, we can extend this isomorphism from $\Lambda_{\Oe}(\GG)$-modules to $\Lambda_{E}(\GG)$-modules
     \[
      h^{1}_{\mathrm{Iw},W}\colon \NN(W)^{\psi=1} \to \Hiw,
     \]    
    where $\NN(W)=E\otimes \NN(T_W)$ and $\Hiw = E \otimes H^{1}_{\mathrm{Iw}}(\Qp,T)$.
\end{thm}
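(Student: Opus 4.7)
The statement is Berger's theorem, so my plan is to outline his strategy in \cite[Theorem A.3]{BER2} rather than rederive it from scratch. The assertion refines Fontaine's canonical isomorphism to the Wach module, and the argument factors through it.

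Start from Fontaine's canonical $\Lambda_{\Oe}(\GG)$-module isomorphism
\[
h^{1}_{\mathrm{Iw}}\colon \mathbb{D}(T_W)^{\psi=1} \xrightarrow{\sim} H^{1}_{\mathrm{Iw}}(\Qp, T_W),
\]
recalled in the previous subsection. Since $\NN(T_W) \subset \mathbb{D}(T_W)$ is stable under $\psi$ (a consequence of the defining conditions of a Wach module and the relation $q\cdot\vp(\NN)\supseteq \NN$ in the positive case), there is a natural inclusion $\NN(T_W)^{\psi=1} \subset \mathbb{D}(T_W)^{\psi=1}$. The core of the proof is to show this inclusion is an equality; one then obtains $h^{1}_{\mathrm{Iw},T_W}$ by restriction of $h^{1}_{\mathrm{Iw}}$.

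To establish the equality, consider the short exact sequence of $(\vp,\GG)$-modules
\[
0 \to \NN(T_W) \to \mathbb{D}(T_W) \to Q \to 0,
\]
where $Q := \mathbb{D}(T_W)/\NN(T_W)$, and take the long exact sequence of $(\psi-1)$-cohomology. It suffices to prove that $\psi-1$ acts bijectively on $Q$. The quotient $Q$ carries a natural $\pi$-adic filtration whose graded pieces are finite-length modules over $\Oe[[\pi]]$; the positivity of the Hodge-Tate weights of $W$, which lie in $[0,k-1]$, guarantees that only finitely many of these graded pieces are non-zero. On each graded piece, the action of $\psi$ can be computed from the matrix $P$ of $\vp$ in the fixed basis $n'_{1},n'_{2}$, and its eigenvalues are essentially the inverses of the $\vp$-eigenvalues of $\Dcris(W)$, namely $\alpha^{-1},\beta^{-1}$. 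Our standing assumption that $\alpha^{-1}$ and $\beta^{-1}$ are not integral powers of $p$ prevents any eigenvalue of $\psi$ on $Q$ from being equal to $1$, so $\psi-1$ is invertible on every graded piece. A straightforward devissage argument then yields the bijectivity of $\psi-1$ on $Q$.

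Extending scalars from $\Oe$ to $E$ is formal: both $\NN(W)^{\psi=1}$ and $\Hiw$ are obtained from their integral counterparts by tensoring with $E$ over $\Oe$, and the $\Lambda_{\Oe}(\GG)$-linearity of $h^{1}_{\mathrm{Iw},T_W}$ upgrades it to a $\Lambda_{E}(\GG)$-linear map $h^{1}_{\mathrm{Iw},W}$ after inverting $p$. The principal obstacle is the bijectivity of $\psi-1$ on $Q$: this is where the hypothesis that $\alpha^{-1},\beta^{-1}$ are not $p$-power units is essential, and where the positivity of the Hodge-Tate weights of $W$ is crucial to reduce the question to a finite-length module where the spectral argument applies.
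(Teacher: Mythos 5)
The paper gives no proof of this statement---it is quoted directly from Berger---so the only question is whether your outline is a faithful reconstruction of Berger's argument. The high-level strategy is right: restrict Fontaine's isomorphism $h^{1}_{\mathrm{Iw}}\colon\mathbb{D}(T_W)^{\psi=1}\to H^{1}_{\mathrm{Iw}}(\Qp,T_W)$ to the Wach module and prove $\NN(T_W)^{\psi=1}=\mathbb{D}(T_W)^{\psi=1}$, using the location of the Hodge--Tate weights together with the hypothesis that $\alpha^{-1},\beta^{-1}$ are not integral powers of $p$. The gap is in the pivotal step. You assert that $Q:=\mathbb{D}(T_W)/\NN(T_W)$ carries a $\pi$-adic filtration with finite-length graded pieces, of which only finitely many are nonzero because the Hodge--Tate weights lie in $[0,k-1]$. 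This is not true: since $\mathbb{D}(T_W)$ is (up to $p$-adic completion) $\NN(T_W)[\pi^{-1}]$, the quotient $Q$ is roughly $\NN(T_W)\otimes_{\mathbb{A}^{+}_{E}}(\mathbb{A}_E/\mathbb{A}^{+}_{E})$, which is $\pi$-divisible---so its $\pi$-adic filtration is trivial---and is not $\pi$-power torsion; the range of the Hodge--Tate weights has no bearing on its size. The d\'evissage you describe therefore never reduces to a finite problem, and as written it does not prove that $\psi-1$ is bijective on $Q$.

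What your outline is missing is a separate \emph{boundedness} step, which is the actual content of Berger's theorem. One first shows that any $x\in\mathbb{D}(T_W)^{\psi=1}$ has bounded pole order, i.e.\ $\mathbb{D}(T_W)^{\psi=1}\subset\pi^{-N}\NN(T_W)$ for a suitable $N$ depending on the Hodge--Tate weights; this is an analytic statement exploiting the contracting behaviour of $\psi$ on negative powers of $\pi$. Only after this reduction does one land in $\pi^{-N}\NN(T_W)/\NN(T_W)$, which genuinely has a finite $\pi$-adic filtration with finite-length graded pieces. On the $j$-th such piece, $\varphi$ acts, modulo higher-order terms, by $p^{-j}$ times $A_{\vp}$, so the relevant eigenvalues are $p^{-j}\alpha^{-1}$ and $p^{-j}\beta^{-1}$ for $1\leq j\leq N$; the hypothesis that $\alpha^{-1},\beta^{-1}$ are not integral powers of $p$ is precisely what makes $1-\vp$ (equivalently $\psi-1$) invertible on every graded piece. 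Your description of the eigenvalues as ``essentially $\alpha^{-1},\beta^{-1}$'' suppresses these $p^{-j}$-twists, which is exactly the point at which the eigenvalue hypothesis is used; if the eigenvalues were really just $\alpha^{-1},\beta^{-1}$, one would only need them to differ from $1$, not from all $p^{j}$.
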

\begin{rem}For a given $p$-adic $E$-linear $G_{\Qp}$-representation $V$, the torsion part of $\mathrm{H}^{1}_{\mathrm{Iw}}(\Qp, V)$ is given by $V^{\Gal(\overline{\Qp}/\Qp(\mu_{p^\infty}))}$.
But, for our representation $W$, by \cite[Theorem 4.1.1]{BLZ}, $W^{\Gal(\overline{\Qp}/\Qp(\mu_{p^\infty}))}=0$, and hence the Iwasawa cohomology $\Hiw$ is a free $\Lambda_{E}(\GG)$-module of rank $2$.\end{rem}

\subsection{Coleman maps}
For the $p$-adic representation $W$ and the $\Oe$-lattice $T_W$ in $W$, we deduce $\NN(T_W) \subset \vp^{*}(\NN(T_W))$, since the Hodge--Tate weights of $W$ are non-negative.
Here $\vp^{*}(\NN(T_W))$ is $\mathbb{A}^{+}_{E}$-submodule of $\NN(T_W)[\pi^{-1}]$ generated by $\vp(\NN(T_W))$ (See \cite[Lemma 1.7]{LLZ2}). 
Hence there exists a well-defined map $1-\vp\colon \NN(T_W) \to \vp^{*}(\NN(T_W))$ which maps $\NN(T_W)^{\psi=1}$ to $(\vp^{*}\NN(T_W))^{\psi=0}$. 

\begin{thm}[Lei--Loeffler--Zerbes, Berger]
\label{thm1.6}
    The $\Lambda_{\Oe}(\GG)$-module $(\vp^{*}\NN(T_W))^{\psi=0}$ is free of rank $2$. Moreover, for any basis $v_1, v_2$ of $\Dcris(T_W)$, there exists an $\Oe\otimes \mathbb{A}^{+}_{\Qp}$-basis $n_{1},n_{2}$ of $\NN(T_W)$ such that $n_{i} \equiv v_{i} \modd \pi $ and $(1+\pi)\vp(n_{1}),(1+\pi)\vp(n_{2})$ form a $\Lambda_{\Oe}(\GG)$-basis of $(\vp^{*}\NN(T_W))^{\psi=0}$.
\end{thm}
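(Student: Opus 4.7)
The plan is to build the required basis in three stages: lift a basis of $\Dcris(T_W)$ to $\NN(T_W)$, identify the $\vp$-images as a free $\mathbb{A}^{+}_{E}$-basis of $\vp^{*}(\NN(T_W))$, and then pin down the kernel of $\psi$ via the Mellin transform recalled in Section 2.

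First I would lift the basis. Since $\NN(T_W)$ is free of rank $2$ over $\Oe\otimes\mathbb{A}^{+}_{\Qp}$ and its reduction modulo $\pi$ is identified with $\Dcris(T_W)$ (the distinguished basis $n'_{1},n'_{2}$ from Section 3 reducing to $w_{1},w_{2}$ because $P\equiv A_{\vp}\modd\pi$), any other basis $v_{1},v_{2}$ of $\Dcris(T_W)$ differs from $w_{1},w_{2}$ by a matrix $M\in\mathrm{GL}_{2}(\Oe)$. Lifting $M$ entrywise by constants to $\mathrm{GL}_{2}(\Oe[[\pi]])$ and applying it to $n'_{1},n'_{2}$ produces a basis $n_{1},n_{2}$ of $\NN(T_W)$ with $n_{i}\equiv v_{i}\modd\pi$. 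Next, by definition $\vp^{*}(\NN(T_W))$ is the $\mathbb{A}^{+}_{E}$-submodule of $\NN(T_W)[\pi^{-1}]$ generated by $\vp(n_{1}),\vp(n_{2})$. Because $n_{1},n_{2}$ are $\mathbb{A}^{+}_{E}$-linearly independent and $\vp\colon\mathbb{A}^{+}_{E}\to\mathbb{A}^{+}_{E}$ is injective, the images $\vp(n_{1}),\vp(n_{2})$ remain independent and thus form a free $\mathbb{A}^{+}_{E}$-basis of $\vp^{*}(\NN(T_W))$.

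For the final stage, take any $y\in\vp^{*}(\NN(T_W))$ and write it uniquely as $y=f_{1}\vp(n_{1})+f_{2}\vp(n_{2})$ with $f_{i}\in\mathbb{A}^{+}_{E}$. The projection formula $\psi(a\vp(x))=\psi(a)\cdot x$, which is the Wach-module analogue of the standard identity on $\mathbb{A}^{+}_{E}$ and follows from the $\vp$-semilinearity of $\psi$, gives $\psi(y)=\psi(f_{1})n_{1}+\psi(f_{2})n_{2}$. By independence of $n_{1},n_{2}$ the condition $\psi(y)=0$ is equivalent to $f_{1},f_{2}\in(\mathbb{A}^{+}_{E})^{\psi=0}$, and therefore
\begin{equation*}
(\vp^{*}(\NN(T_W)))^{\psi=0}=(\mathbb{A}^{+}_{E})^{\psi=0}\vp(n_{1})\oplus(\mathbb{A}^{+}_{E})^{\psi=0}\vp(n_{2}).
\end{equation*}
The Mellin transform identifies $\Lambda_{\Oe}(\GG)$ with $(\mathbb{A}^{+}_{E})^{\psi=0}$ via $1\mapsto(1+\pi)$, so each summand above becomes a free rank-one $\Lambda_{\Oe}(\GG)$-module generated by $(1+\pi)\vp(n_{i})$. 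This delivers both the freeness of rank $2$ and the advertised basis.

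The main obstacle I foresee is the last stage: one has to carefully verify the projection formula for $\psi$ on the Wach module (rather than on $\mathbb{A}^{+}_{E}$ alone) and check that the Mellin isomorphism $\Lambda_{\Oe}(\GG)\cong(\mathbb{A}^{+}_{E})^{\psi=0}$ is compatible with the $\Lambda_{\Oe}(\GG)$-action inherited from the $\GG$-action on $(1+\pi)$. Both are essentially formal once the $(\vp,\GG)$-formalism is in place, but the bookkeeping requires attention and is where the work of Berger and Lei-Loeffler-Zerbes is invoked.
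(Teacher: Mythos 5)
The paper gives no proof of this statement; it simply cites \cite[Lemma 3.15]{LLZ1}, so your sketch is an attempt at reconstructing that reference rather than something to compare against an in-paper argument.

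Your first two stages are fine: lifting a constant change-of-basis matrix $M\in\mathrm{GL}_{2}(\Oe)$ to get $n_{i}\equiv v_{i}\bmod\pi$ is legitimate, and $\vp(n_{1}),\vp(n_{2})$ are indeed an $\mathbb{A}^{+}_{E}$-basis of $\vp^{*}\NN(T_W)$, with the $\mathbb{A}^{+}_{E}$-module (or set-theoretic) decomposition
\[
(\vp^{*}\NN(T_W))^{\psi=0}=(\mathbb{A}^{+}_{E})^{\psi=0}\vp(n_{1})\oplus(\mathbb{A}^{+}_{E})^{\psi=0}\vp(n_{2})
\]
following from the projection formula $\psi(f\vp(x))=\psi(f)x$. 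The problem is the final passage to a $\Lambda_{\Oe}(\GG)$-module statement. You identify each summand $(\mathbb{A}^{+}_{E})^{\psi=0}\vp(n_{i})$ with $\Lambda_{\Oe}(\GG)$ via the Mellin transform, but this uses the action of $\GG$ on the $\mathbb{A}^{+}_{E}$-coefficients only, whereas the $\Lambda_{\Oe}(\GG)$-module structure on $(\vp^{*}\NN(T_W))^{\psi=0}$ comes from the diagonal $\GG$-action. Since $\gamma(n_{i})\ne n_{i}$ in general (the Wach condition only gives $\gamma(n_{i})\equiv n_{i}\bmod\pi\NN(T_W)$; the matrices $G_{\gamma}$ of Section 3 are nontrivial), the summands $(\mathbb{A}^{+}_{E})^{\psi=0}\vp(n_{i})$ are \emph{not} individually $\GG$-stable, so the decomposition you write down is not a decomposition of $\Lambda_{\Oe}(\GG)$-modules, and freeness of rank $2$ with basis $(1+\pi)\vp(n_{i})$ does not follow by inspecting summands.

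You do flag a concern at the end, but you misdiagnose it: you worry about the projection formula and the $\GG$-compatibility of the Mellin transform, and say both are \emph{essentially formal}. Those two points are indeed formal. What is not formal, and is the substantive content of \cite[Lemma 3.15]{LLZ1}, is controlling the mixing of coordinates coming from the nontrivial $G_{\gamma}$-matrices, in order to show that the $\Lambda_{\Oe}(\GG)$-span of $(1+\pi)\vp(n_{1}),(1+\pi)\vp(n_{2})$ is all of $(\vp^{*}\NN(T_W))^{\psi=0}$ and that they are linearly independent. This uses that $G_{\gamma}\equiv I_{2}\bmod\pi$ in a unipotent/approximation argument (morally a Nakayama-type step), and it is precisely where the cited Berger and Lei-Loeffler-Zerbes results are needed. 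Your sketch as written would make the statement nearly trivial, which it is not.
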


\begin{proof}
    See\cite[Lemma 3.15]{LLZ1} 
    for the proof for any crystalline representation of dimension $d$.
\end{proof}
The above theorem gives an isomorphism of $\Lambda_{\Oe}(\GG)$-modules
 \[
   \mathfrak{J}\colon (\vp^{*}\NN(T_W))^{\psi=0} \to \Lambda_{\Oe}(\GG)^{\oplus 2}. 
 \]   

\begin{defi}[The Coleman map]
We define the Coleman map
\begin{equation*}
    \Col=(\Col_{i})_{i=1}^{2}\colon \NN(T_W)^{\psi=1} \to \Lambda_{\Oe}(\GG)^{\oplus 2}
\end{equation*}
as the composition $\mathfrak{J}\circ (1-\vp)$. 
\end{defi}
This Coleman map $\Col$ can be extended to a map from $\NN(W)$ to get a $\Lambda_{E}(\GG)$-module homomorphism
\begin{equation*}
    \Col\colon \NN(W)^{\psi=1} \to \Lambda_{E}(\GG)^{\oplus 2}.
\end{equation*}

From the above discussion, for the fixed basis $n'_1, n'_2$ for $\NN(T_W)$ and basis $w_{1},w_{2}$ for $\Dcris(T_W)$, we get a matrix $\Mbar$ as follows: The elements $(1+\pi)(\vp(n'_{1})),(1+\pi)(\vp(n'_{2}))$ form a $\Lambda_{\Oe}(\GG)$-basis of $(\vp^{*}\NN(T_{W}))^{\psi=0}$. Furthermore the elements $(1+\pi)\otimes w_{1},(1+\pi)\otimes w_{2}$ form a basis of $(\BrigE)^{\psi=0} \otimes \Dcris(T_W)$ as a $\HH_{E}(\GG)$-module. Since $n'_{i} \equiv w_{i} \mod \pi$ for $i=1,2$, there exists a unique $2\times2$ matrix $\Mbar \in M_{2,2}(\HH_{E}(\GG))$ such that
\begin{equation}
    \begin{bmatrix}(1+\pi)\vp(n'_{1}) & (1+\pi)\vp(n'_{2}) \end{bmatrix} = \begin{bmatrix}(1+\pi)\otimes w_{1} & (1+\pi)\otimes w_{2} \end{bmatrix}\Mbar. 
\end{equation}
That is, $\Mbar$ is a change of basis matrix for the following homomorphism of $\HH_{E}(\GG)$-modules:
\[(\vp^{*}\NN(T_W))^{\psi=0} \hookrightarrow (\BrigE)^{\psi=0}\otimes \Dcris(T_{W}).\]

\begin{rem}
More precisely, $\Mbar \in M_{2,2}(\HH(\GG_1))$, since $n'_{i}$ lie in $(1+\pi)\vp(\NN(T_W)) \subset (1+\pi)\vp(\BrigE)\otimes \Dcris(T_W)$.    
\end{rem}

This matrix $\Mbar$ will play a crucial role in the upcoming sections. 
More precisely, we will show that $\Mbar$ is a logarithmic matrix (in the sense of Sprung and Lei--Loeffler--Zerbes) that can be used in the decomposition of power series with unbounded denominators.

\subsection{Perrin-Riou's Exponental map, the $p$-adic regulator and the relation between them}

Recall that the eigenvalues of $\vp$ are not integral powers of $p$. Under this assumption, we can construct the \emph{Perrin-Riou's exponentia map} as (see \cite[Section 3.2.3]{PR1}, \cite[Section II.5]{BER2} for the details)
\begin{equation*}
    \Omega_{T_W,k-1}\colon \OO_{E}\otimes(\Brig)^{\psi=0}\otimes\Dcris(T_W) \to \HH_{E}(\Gamma)\otimes H^{1}_{\mathrm{Iw}}(\Qp,T_W).
\end{equation*}
Note that this map is a $\Lambda_{\Oe}(\GG)$-module homomorphism.
We can extend this to a $\Lambda_{E}(\GG)$-module homomorphism
\begin{equation*}
    \Omega_{W,k-1}\colon E\otimes(\Brig)^{\psi=0}\otimes\Dcris(W) \to \HH_{E}(\Gamma)\otimes\Hiw.
\end{equation*}
This map interpolates the Bloch--Kato exponential map 
$$\mathrm{exp}_{n,j} \colon \QQ_{p,n} \otimes \Dcris(W(j)) \to H^{1}(\QQ_{p,n}, W(j)),$$
where $j$ is any integer.

Recall we have fixed a topological generator $u$ of $1+p\Zp$ in Section \ref{setup}.
For $i\in \ZZ$, define $\ell_{i} = \dfrac{\log_{p}(1+X)}{\log_{p}(u)} - i$. 
It is easy to see that $\ell_{i} \in \HH_{E}(\GG)$ for all integers $i$. 
Berger gave a description of $\Omega_{W,k-1}$ in the terms of $\ell_i$'s (see \cite[Section II.1, Theorem II.13]{BER2} for more details) as follows:
\begin{equation}
    \Omega_{W,k-1}(z) = (\ell_{k-2}\circ\cdots\circ\ell_{0})(1-\vp)^{-1}(\bar{z}),
\end{equation}
where $z\in \HH_{E}(\GG)\otimes\Dcris(W)$ and $\bar{z}=(\mathfrak{M}\otimes 1)(z) $.

\begin{defi}[The $p$-adic regulator]
The Perrin-Riou $p$-adic regulator map $\LT$ for the $G_{\Qp}$-stable $\Oe$-lattice $T_W$ in $W$ is a $\Lambda_{\Oe}(\GG)$-homomorphism defined as
    \begin{equation*}
        \LT\coloneqq(\mathfrak{M}^{-1}\otimes1)\circ(1-\vp)\circ(h^{1}_{\mathrm{Iw}, T_W})^{-1} \colon H^{1}_{\mathrm{Iw}}(\Qp, T_W) \to \HH_{E}(\GG)\otimes \Dcris(T_W).
    \end{equation*}
\end{defi}

The $p$-adic regulator $\LT$ can be extended to a $\Lambda_{E}(\GG)$-homomorphism as
\begin{equation*}
    \LW\colon \Hiw \to \HH_{E}(\GG)\otimes \Dcris(W).
\end{equation*}

The $p$-adic regulator map $\LW$ and the exponential map $\Omega_{W,k-1}$ are related by the following lemma:
\begin{lem}
\label{lemma1}
    As maps on $\Hiw$, we have
    \begin{equation*}
        \LW = (\mathfrak{M}^{-1}\otimes 1)(\prod_{i=0}^{k-2}\ell_{i})(\Omega_{W,k-1})^{-1}.
    \end{equation*}

    In other words,
    \begin{equation}
        \Omega_{W,k-1}(\LW(z)) = (\prod_{i=0}^{k-2}\ell_{i})(z),
    \end{equation}
    for all $z\in \Hiw$. 
\end{lem}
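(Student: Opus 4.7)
The proof traces $z \in \Hiw$ through the definitions of $\LW$ and $\Omega_{W,k-1}$ and exploits the fact that the intermediate operators cancel in pairs. Setting $y := (h^{1}_{\mathrm{Iw},W})^{-1}(z) \in \NN(W)^{\psi=1}$, the definition of the $p$-adic regulator immediately gives
$$\LW(z) = (\mathfrak{M}^{-1}\otimes 1)\circ(1-\vp)(y).$$

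Next I plug this into Berger's explicit formula for $\Omega_{W,k-1}$ (the displayed equation above). The Mellin transform of $\LW(z)$ is
$$\overline{\LW(z)} = (\mathfrak{M}\otimes 1)(\LW(z)) = (\mathfrak{M}\otimes 1)\circ(\mathfrak{M}^{-1}\otimes 1)\circ(1-\vp)(y) = (1-\vp)(y),$$
so the Mellin transform cancels its inverse. This element lies in $(\vp^{*}\NN(W))^{\psi=0}$, which is precisely the submodule on which Berger's $(1-\vp)^{-1}$ is defined as a left inverse, and it returns $y$. Substituting into Berger's formula yields
$$\Omega_{W,k-1}(\LW(z)) = (\ell_{k-2}\circ\cdots\circ\ell_{0})(y).$$

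To conclude, I use that $\ell_0,\ldots,\ell_{k-2}$ are elements of the commutative ring $\HH_E(\GG)$, so the composition equals multiplication by the product $\prod_{i=0}^{k-2}\ell_{i}$. The $\Lambda_E(\GG)$-linear isomorphism $h^{1}_{\mathrm{Iw},W}$ extends to an $\HH_E(\GG)$-equivariant identification after tensoring, which allows the scalar $\prod \ell_i$ to be moved through:
$$(\ell_{k-2}\circ\cdots\circ\ell_{0})(y) = \left(\prod_{i=0}^{k-2}\ell_{i}\right)\cdot h^{1}_{\mathrm{Iw},W}(y) = \left(\prod_{i=0}^{k-2}\ell_{i}\right)(z)$$
inside $\HH_E(\GG)\otimes \Hiw$, establishing the claim.

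There is no genuine obstacle here; the lemma is a formal identity expressing that $\Omega_{W,k-1}$ and $\LW$ are inverse to one another up to the factor $\prod \ell_i$. The only care needed is the bookkeeping check that Berger's formula actually applies to $\LW(z)$, i.e.\ that its image under $\mathfrak{M}\otimes 1$ lies in the image of $1-\vp$ acting on $\NN(W)^{\psi=1}$. This is automatic from the construction of $\LW$, because $(1-\vp)(y)$ lives in $(\vp^{*}\NN(W))^{\psi=0}$ by the discussion preceding Theorem~\ref{thm1.6}, and it is precisely on this module that the inversion in Berger's formula is taken.
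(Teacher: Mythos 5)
Your argument is correct. The paper does not write out a proof of this lemma but simply defers to \cite[Theorem 4.6]{LLZ2}, and your computation---unwinding the definition $\LW=(\mathfrak{M}^{-1}\otimes1)\circ(1-\vp)\circ(h^1_{\mathrm{Iw},W})^{-1}$, cancelling $\mathfrak{M}\otimes1$ against its inverse, invoking Berger's explicit formula so that $(1-\vp)^{-1}\circ(1-\vp)$ collapses, and passing $\prod\ell_i$ through the $\HH_E(\GG)$-equivariant identification of $\NN(W)^{\psi=1}$ with $\Hiw$---is exactly the argument that proves it. One cosmetic imprecision: Berger's $(1-\vp)^{-1}$ is defined on all of $(\BrigE)^{\psi=0}\otimes\Dcris(W)$, not merely on $(\vp^{*}\NN(W))^{\psi=0}$; what you actually need, and do have, is injectivity of $1-\vp$ on $(\BrigE)^{\psi=1}\otimes\Dcris(W)$, which follows from the standing assumption that $\alpha^{-1},\beta^{-1}$ are not powers of $p$ (in particular not $1$), so that $\Dcris(W)^{\vp=1}=0$.
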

\begin{proof}
    See \cite[Theorem 4.6]{LLZ2}.
\end{proof}

The following lemma gives a relationship between $\LT$ and the Coleman maps.
\begin{lem}[Lei--Loeffler--Zerbes {{\cite[Lemma 3.3]{LLZ2}}}]
\label{lem1.8}
    For $z\in \NN(T_W)^{\psi=\green{1}}$, we have
    \begin{equation*}
        (1-\vp)(z) =\begin{bmatrix} (1+\pi)\otimes w_{1} & (1+\pi)\otimes w_{2} \end{bmatrix}\Mbar\Col(z). 
    \end{equation*}
    Thus, we can rewrite $\LT$ in terms of $(1+\pi)\otimes w_{1}, (1+\pi)\otimes w_{2}$ as
\[\LT(z) = \begin{bmatrix} (1+\pi)\otimes w_{1} & (1+\pi)\otimes w_{2}\end{bmatrix}\Mbar\left(\Col\circ(h^{1}_{\mathrm{Iw},T_W})^{-1}\right)(z),\]
where $z \in H^{1}_{\mathrm{Iw}}(\Qp,T_W)$.
\end{lem}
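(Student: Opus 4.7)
The plan is a direct unwinding of the three definitions already in hand: the Coleman map $\Col = \mathfrak{J}\circ(1-\vp)$, the matrix $\Mbar$ defined via the change-of-basis equation in the preceding display, and the $p$-adic regulator $\LT$. That defining relation for $\Mbar$ is precisely the change-of-basis between the two natural $\Lambda_{\Oe}(\GG)$-bases of $(\vp^{*}\NN(T_W))^{\psi=0}$, so the lemma should drop out essentially by substitution. I will work with $z \in \NN(T_W)^{\psi=1}$, the natural domain of $\Col$; under this convention $(1-\vp)(z)$ lies in $(\vp^{*}\NN(T_W))^{\psi=0}$.

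For the first identity, Theorem~\ref{thm1.6} provides a $\Lambda_{\Oe}(\GG)$-basis $\{(1+\pi)\vp(n'_{1}),(1+\pi)\vp(n'_{2})\}$ of $(\vp^{*}\NN(T_W))^{\psi=0}$, and $\mathfrak{J}$ is by construction the corresponding coordinate isomorphism. Therefore
\[
(1-\vp)(z) \;=\; \begin{bmatrix}(1+\pi)\vp(n'_{1}) & (1+\pi)\vp(n'_{2})\end{bmatrix}\Col(z),
\]
and substituting the defining relation of $\Mbar$ into the row vector on the right produces the asserted formula. For the second identity, given $z \in \Hiw$, set $y = (h^{1}_{\mathrm{Iw},T_{W}})^{-1}(z)\in\NN(W)^{\psi=1}$ via Theorem~\ref{thm1.5}. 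Apply the first identity to $y$, then apply $\mathfrak{M}^{-1}\otimes 1$ to both sides; by the definition of $\LT$ the left-hand side becomes $\LT(z)$, while the right-hand side is interpreted via the Mellin identification between $\HH_{E}(\GG)\otimes\Dcris(T_W)$ and $(\BrigE)^{\psi=0}\otimes\Dcris(T_W)$.

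There is no serious mathematical obstacle here; the entire argument is a chase through the definitions and is essentially the content of \cite[Theorem 3.5]{LLZ2}. The one point requiring care is bookkeeping with the Mellin identification, and in particular the fact that $\Mbar \in M_{2,2}(\HH_{E}(\GG_{1}))$ has to be regarded consistently on both sides of the final formula: on the Iwasawa-cohomology side as acting through $\HH_{E}(\GG)$, and on the Wach-module side as acting on $(\BrigE)^{\psi=0}$ via $\mathfrak{M}$. Once this identification is in place the two formulas are immediate.
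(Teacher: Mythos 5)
Your argument is correct and is exactly the argument the paper has in mind: the paper's proof consists of the single display $(1-\vp)(z) = \begin{bmatrix}(1+\pi)\vp(n'_{1}) & (1+\pi)\vp(n'_{2})\end{bmatrix}\Col(z)$ together with the words ``and the definitions of $\Mbar$ and $\LT$,'' which is precisely the unwinding you carry out. You have also correctly spotted and repaired the typo in the statement: $\Col$ is defined on $\NN(T_W)^{\psi=1}$, not $\NN(T_W)^{\psi=0}$, and it is $(1-\vp)(z)$ that lands in $(\vp^{*}\NN(T_W))^{\psi=0}$. Your closing remark about the Mellin identification is the right thing to flag; the displayed formula for $\LT(z)$ implicitly equates $\HH_{E}(\GG)\otimes\Dcris(T_W)$ with $(\BrigE)^{\psi=0}\otimes\Dcris(T_W)$ via $\mathfrak{M}\otimes 1$, under which the row vector acts by $f\cdot\bigl((1+\pi)\otimes w_i\bigr)=\mathfrak{M}(f)\otimes w_i$, and this is consistent with the paper's conventions elsewhere. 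No gap.
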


\begin{proof}
    This can be proved using.
    $$(1-\vp)(z)= \Matwach\cdot\Col(z),$$
    and the definitions of $\Mbar$ and $\LT$.
\end{proof}

Now for $z\in \HiwT$, $\LT(z)$ is an element of $\HH_{E}(\GG)\otimes \Dcris(T_W)$. Hence, we can apply any character of $\GG$ to $\LT(z)$ to get an element in $E\otimes\Dcris(T_W)$. 
\begin{prop}
\label{prop1}
Let $z \in \HiwT$. Then for any integer $0 \leq i \leq k-2$, and for any Dirichlet character $\omega$ of conductor $p^n > 1$, we have
\begin{align}
    (1-\vp)^{-1}(1-p^{-1}\vp^{-1}) \chi^{i}(\LT(z)\otimes t^{i}e_{-i}) &\in \mathrm{Fil}^{0}(\Dcris(T_{W}(-i))),\\
    \vp^{-n}(\chi^{i}\omega(\LT(z)\otimes t^{i}e_{-i})) &\in \QQ_{p,n}\otimes \mathrm{Fil}^{0}\Dcris(T_{W}(-i)) \label{16},
\end{align}
where $\chi$ is the $p$-adic cyclotomic character and $e_{-i}$ is the basis of $\Dcris(\ZZ_{p}(-i))$. 
\end{prop}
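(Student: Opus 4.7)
My plan is to reduce both assertions to the standard interpolation properties of Perrin-Riou's big logarithm $\Omega_{T_W, k-1}$, which by Lemma \ref{lemma1} is essentially inverse to $\LT$ up to the scalar $\prod_{i=0}^{k-2}\ell_i$. The underlying structural reason both statements hold is that the Bloch-Kato dual exponential takes values in $\mathrm{Fil}^0\Dcris$; the work lies in turning the evaluation of $\LT(z)$ at a finite-order character into a dual exponential.

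For statement $(1)$, I would observe that for $0\leq i \leq k-2$, evaluating $\prod_{j=0}^{k-2}\ell_j$ at the character $\chi^i$ kills the $j=i$ factor, since $\chi^i(\log(1+X)/\log_p u)=i$ forces $\ell_i(\chi^i)=0$, while the other factors contribute explicit nonzero constants. Plugging this into the identity $\Omega_{W,k-1}(\LW(z)) = \prod_j \ell_j \cdot z$ from Lemma \ref{lemma1}, and combining with Berger's explicit formula $\Omega_{W,k-1}(z) = (\ell_{k-2}\circ\cdots\circ\ell_{0})(1-\vp)^{-1}(\bar z)$, I would extract a clean identification of $\chi^i(\LT(z)\otimes t^i e_{-i})$ with the image of the localization of $z$ in $H^1(\Qp, T_W(-i))$ under the Bloch-Kato dual exponential, up to the Euler factor $(1-\vp)(1-p^{-1}\vp^{-1})^{-1}$. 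Since $\exp^*_{\Qp, T_W(-i)}$ lands in $\mathrm{Fil}^0\Dcris(T_W(-i))$ by construction, rearranging gives $(1)$.

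For statement $(2)$, the analogous argument at a character $\chi^i\omega$ of conductor $p^n>1$ produces a Frobenius twist factor $\vp^n$ --- arising from the level-$n$ cyclotomic twist carried by $\omega$, the same mechanism that produces $\vp^n$ in the interpolation of the Perrin-Riou exponential at non-trivial finite-order characters. This identifies $\vp^{-n}\bigl(\chi^i\omega(\LT(z)\otimes t^i e_{-i})\bigr)$ with the level-$n$ Bloch-Kato dual exponential $\exp^*_{\QQ_{p,n}, T_W(-i)}$ applied to the appropriate localization of $z$ in $H^1(\QQ_{p,n}, T_W(-i))$, which lies in $\QQ_{p,n}\otimes \mathrm{Fil}^0\Dcris(T_W(-i))$.

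The main obstacle is the bookkeeping of Euler factors, Gauss-sum-type terms, and the normalization of the twist $t^i e_{-i}$; the argument is structurally the same for both parts, but matching constants to the conventions fixed in Section 2 requires a careful dictionary between Lemma \ref{lemma1}, Berger's explicit formula for $\Omega_{W,k-1}$, and the classical Perrin-Riou reciprocity law as worked out in e.g.\ \cite[Appendix B]{PR2} or \cite[Section 4]{LLZ2}.
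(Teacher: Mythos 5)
The paper's own proof is a one-line citation: replace $V$ by $T_W$ in \cite[Proposition~4.8]{LLZ2}. Your proposal unpacks what lies behind that citation, and the structural outline you give — identify $\chi^i\omega(\LT(z)\otimes t^i e_{-i})$ with a Bloch--Kato dual exponential of the specialization of $z$ in $H^1(\QQ_{p,n},T_W(-i))$ via Perrin-Riou reciprocity, then use that $\exp^*$ lands in $\mathrm{Fil}^0\Dcris$, with the Euler factor appearing at conductor $p^0$ and the $\vp^n$ twist at conductor $p^n>1$ — is exactly the mechanism LLZ2 use. So this is the same route, just made visible rather than cited.

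Two caveats worth flagging. First, the observation that $\ell_i(\chi^i)=0$ and hence $\prod_j\ell_j$ vanishes at $\chi^i$ is a true fact but is not itself the engine of the argument; as stated, substituting it into the identity $\Omega_{W,k-1}(\LW(z))=\prod_j\ell_j\cdot z$ just gives $0=0$ after specialization, which does not directly pin down $\chi^i(\LT(z)\otimes t^ie_{-i})$. What actually does the work is the interpolation formula for $\Omega_{W,k-1}$ at each layer $n$ (the fact that its specializations recover $\exp_{\QQ_{p,n},W(-i)}$, and by duality that $\LT$ interpolates $\exp^*$). Your sketch gestures at this ("combining with Berger's explicit formula...") but never derives the identification, and you explicitly acknowledge the remaining gap in matching constants and Gauss-sum factors. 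Second, note that the statement is about the integral lattice $T_W$, so strictly one must check that the LLZ2 argument for $V$ carries over to $T_W$ — which it does because all the maps involved ($1-\vp$, $h^1_{\mathrm{Iw},T_W}$, and the Wach-module structure $\NN(T_W)$) are defined integrally, and this is precisely the content of the paper's one-line observation. In short: right approach, correct underlying mechanism, but an honest sketch rather than a complete proof; the paper sidesteps all of this by citing the result directly.
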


\begin{proof}
    We replace $V$ with $T_W$ in \cite[Proposition 4.8]{LLZ2} and the result follows. 
\end{proof}
 
\begin{lem}
\label{lemma2}
We have
$$\det(\LW) = \prod_{i=0}^{k-2} \ell_{i}.$$    
\end{lem}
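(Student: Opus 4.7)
The plan is to invoke Lemma \ref{lemma1} and pass to determinants. First, I would extend scalars and identify $\HH_E(\GG)\otimes_{\Lambda_E(\GG)}\Hiw$ with $\HH_E(\GG)\otimes_{E}\Dcris(W)$ (via the Mellin isomorphism together with $h^{1}_{\mathrm{Iw},W}$), so that both $\LW$ and $\Omega_{W,k-1}$ become $\HH_E(\GG)$-linear maps between free rank-$2$ $\HH_E(\GG)$-modules. Fixing bases of $\Hiw$ over $\Lambda_E(\GG)$ and of $\Dcris(W)$ over $E$, let $[\LW], [\Omega_{W,k-1}]\in M_{2,2}(\HH_E(\GG))$ denote their matrix representations. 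Lemma \ref{lemma1} then becomes the matrix identity
\begin{equation*}
[\Omega_{W,k-1}]\cdot [\LW]=\left(\prod_{i=0}^{k-2}\ell_i\right)I_2,
\end{equation*}
and taking determinants yields
\begin{equation*}
\det(\Omega_{W,k-1})\cdot\det(\LW)=\left(\prod_{i=0}^{k-2}\ell_i\right)^{2}.
\end{equation*}

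To conclude I would compute $\det(\Omega_{W,k-1})=\prod_{i=0}^{k-2}\ell_i$ (up to a unit of $\HH_E(\GG)$). This is the classical determinant formula for Perrin-Riou's big logarithm attached to a crystalline representation with Hodge-Tate weights $\{0,k-1\}$ whose $\vp$-eigenvalues are not integral powers of $p$ (both conditions hold for $W$ by Section 3). It is extracted from the explicit description $\Omega_{W,k-1}(z)=(\ell_{k-2}\circ\cdots\circ\ell_0)(1-\vp)^{-1}(\bar z)$ combined with the invertibility of $(1-\vp)$ on $\Dcris(W)$, and is essentially \cite[Theorem 4.7]{LLZ2} in the framework we have set up. Dividing the displayed identity by $\det(\Omega_{W,k-1})$ gives $\det(\LW)=\prod_{i=0}^{k-2}\ell_i$.

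The main obstacle is the determinant formula for $\Omega_{W,k-1}$, which requires careful bookkeeping of the $\HH_E(\GG)$-module structure relating the cohomology side and the $\Dcris$ side through the Mellin transform. An alternative route avoiding this step is to use the factorization $\LT(z)=[(1+\pi)\otimes w_1,(1+\pi)\otimes w_2]\,\Mbar\,(\Col\circ(h^{1}_{\mathrm{Iw},T_W})^{-1})(z)$ from Lemma \ref{lem1.8}: since $\Col\circ(h^{1}_{\mathrm{Iw},T_W})^{-1}$ is an isomorphism onto $\Lambda_E(\GG)^{\oplus 2}$ with unit determinant, this reduces the computation of $\det(\LW)$ to that of the logarithmic matrix $\Mbar$, for which one shows $\det(\Mbar)=\prod_{i=0}^{k-2}\ell_i$ up to a unit (justifying the terminology ``logarithmic matrix'' used throughout the sequel). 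Either way, the argument hinges on translating the statement into a matrix identity and exploiting the scalar identity of Lemma \ref{lemma1}.
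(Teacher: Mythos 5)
The paper's proof is a one-line citation: \cite[Corollary 4.7]{LLZ2} expresses the determinant of the regulator of a $d$-dimensional crystalline representation as $\prod_i \ell_i^{\,d-n_i}$, where the $n_i$ are the dimensions of the steps of the Hodge filtration on $\Dcris$, and substituting $d=2$ and $n_i=1$ for $0\le i\le k-2$ (the Hodge--Tate weights of $W$ being $\{0,k-1\}$) gives the claim immediately. Your primary route --- taking determinants in the matrix form of Lemma~\ref{lemma1} and dividing by $\det(\Omega_{W,k-1})=\prod\ell_i$ --- is in spirit the argument that underlies that corollary, so it is a legitimate alternative. However, your stated justification for $\det(\Omega_{W,k-1})=\prod\ell_i$ is circular: the operator $z\mapsto(1-\vp)^{-1}(\bar z)$ in equation~(4.2) is \emph{not} the inverse of Frobenius on $\Dcris(W)$ (which would have constant, hence unit, determinant) but is, up to the Mellin identification, $\LW^{-1}$ itself. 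Taking determinants of (4.2) therefore only reproduces $\det(\Omega_{W,k-1})\det(\LW)=(\prod\ell_i)^2$, which is the relation you already had, with no independent information about either factor. The formula for $\det(\Omega_{W,k-1})$ is a genuine input (Perrin--Riou's $\delta(V)$ computation, proved by Colmez, Benois, Berger), and must be imported from the literature --- or one simply cites the formula for $\det(\LW)$ directly, as the paper does.

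Your alternative route via $\Mbar$ is incorrect. The composite $\Col\circ(h^1_{\mathrm{Iw},T_W})^{-1}$ does not have unit determinant, and $\det(\Mbar)$ is not $\prod_{i=0}^{k-2}\ell_i$ up to a unit. By Lemma~\ref{lem1.16}, $\det(\Mbar)=\log_{p,k-1}(\gamma_0)/\delta_{k-1}(\gamma_0-1)$ up to a unit of $\Lambda_E(\GG_1)$, whereas $\ell_i=\log_p\bigl(u^{-i}(1+X)\bigr)/\log_p(u)$ gives $\prod_{i=0}^{k-2}\ell_i=(\log_p u)^{-(k-1)}\log_{p,k-1}(\gamma_0)$. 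The two quantities therefore differ by the non-unit polynomial $\delta_{k-1}(\gamma_0-1)$, which vanishes at the trivial character and is precisely, up to a unit, the determinant of the Coleman matrix that you asserted to be a unit. Dropping that factor would produce the wrong answer for $\det(\LW)$.
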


\begin{proof}
    Since the Hodge--Tate weights of $W$ are $0$ and $k-1$, we have
    $$\mathrm{dim}_{E}(\mathrm{Fil}^{i}\Dcris(W)) = 1,$$
    for all $-(k-2)\leq i \leq 0$. 
    Thus, replacing $V$ with $W$, putting $d=2$ and $n_{i}=1$ for all integers $0\leq i\leq(k-2)$ in \cite[Corollary 4.7]{LLZ2}, we get
    $$\det(\LW) = \prod_{i=0}^{k-2} (\ell_{i})^{2-1}.$$
\end{proof}


\subsection{The matrices of $\Omega_{W,k-2}, \LW$ and their connection with $\Mbar$}
For the rest of the article, fix an eigenbasis $\Beigen= \{w_{\alpha}, w_{\beta}\}$ of $\vp$ for $\Dcris(T_W)$, that is, $\Beigen$ is a basis for $\Dcris(T_W)$ and $\vp(w_{\alpha}) = (\alpha)^{-1}w_{\alpha}$ and $\vp(w_{\beta}) = (\beta)^{-1}w_{\beta}$. 
Thus, $(1+\pi)\otimes w_{\alpha}, (1+\pi)\otimes w_{\beta}$ is a basis for $(\BrigE)^{\psi=0}\otimes\Dcris(T_W)$.
We denote the basis $\{w_{1}, w_{2}\}$ for $\Dcris(T_W)$ by $\Bstd$, which we have defined in subsection 3.1.

Recall that the matrix of $\vp$ with respect to $\Bstd$ is
$$A_{\vp}=\MatA, $$
and thus we get
$$\begin{pmatrix} \alpha^{-1} & 0 \\[6pt] 0 & \beta^{-1} \end{pmatrix} = Q^{-1}A_{\vp}Q,$$
where $Q=\MatQ$.

We know that the $\Lambda_{\Oe}(\GG)$-rank of $\HiwT$ is 2, since $\dim_{E}\Dcris(W)=2$ and $T_W$ is an $\Oe$-lattice in $W$. See \cite[Section 3.2]{PR1} and \cite[Proposition 2.7]{BER2} for precise details. 
Thus, we may fix a $\Lambda_{\Oe}(\GG)$-basis $\{z_{1}, z_{2}\}$ for $\HiwT$.   

\subsubsection{The matrix of $\Omega_{T_W,k-1}$}

Using the expoential map $\Omega_{T_{W},k-1}$,
we obtain the following equations
\begin{align*}
    \Omega_{W,k-1}((1+\pi)\otimes w_{\alpha}) &= a\cdot z_{1} + c\cdot z_{2},\\
    \Omega_{W,k-1}((1+\pi)\otimes w_{\beta}) &= b\cdot z_{1} + d\cdot z_{2},
\end{align*}
where $a,b,c,d$ are elements in the distribution ring $\HH_{E}(\GG)$.
In other words, we can write these equations as
$$\begin{bmatrix} \Omega_{W,k-1}((1+\pi)\otimes w_{\alpha}) & \Omega_{W,k-1}((1+\pi)\otimes w_{\beta})\end{bmatrix} = \begin{bmatrix} z_{1} & z_{2}\end{bmatrix} \begin{pmatrix} a & b \\[6pt] c & d\end{pmatrix}.$$
Therefore, with respect to the basis $\{z_{1}, z_{2}\}$ for $\HiwT$ and the basis $\Beigen$ for $\Dcris(T_W)$ we can describe the matrix $M_{\Omega}$ of $\Omega_{T_W,k-1}$ as, 
\begin{equation}
    M_{\Omega} = \begin{pmatrix} a & b\\[6pt] c& d \end{pmatrix}.
\end{equation}

Recall that, if $F\in \HH_{E,r}(\GG)$, we say $F$ is $O(\log_{p}^r)$.

\begin{lem}
\label{lemma3}
    The elements $a,c$ are $O(\log_{p}^{v_{p}(\beta)})$, whereas $b,d$ are $O(\log_{p}^{v_{p}(\alpha)})$.  
\end{lem}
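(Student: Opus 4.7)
The plan is to identify $M_\Omega$ with the adjugate of the matrix of the $p$-adic regulator $\LW$ in the same bases, and then to bound the entries of $M_{\LW}$ using Proposition \ref{prop1} together with Perrin-Riou's characterization of admissible distributions.

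Let $M_{\LW} = \begin{pmatrix} A & B \\ C & D \end{pmatrix}$ denote the matrix of $\LW$ with respect to the basis $\{z_1,z_2\}$ of $\HiwT$ and the basis $\Beigen$ of $\Dcris(T_W)$, so that $\LW(z_1) = A\, w_\alpha + C\, w_\beta$ and $\LW(z_2) = B\, w_\alpha + D\, w_\beta$. Composing Lemma \ref{lemma1} with Lemma \ref{lemma2} gives
\[
M_\Omega \cdot M_{\LW} \;=\; \Bigl(\prod_{i=0}^{k-2}\ell_i\Bigr)\, I \;=\; \det(M_{\LW})\, I,
\]
whence $M_\Omega = \adj(M_{\LW})$; explicitly $a = D$, $b = -B$, $c = -C$ and $d = A$. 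Therefore the lemma reduces to showing that the $w_\alpha$-components $A, B$ of $\LW$ lie in $\HH_{E,v_p(\alpha)}(\GG)$ and the $w_\beta$-components $C, D$ lie in $\HH_{E,v_p(\beta)}(\GG)$.

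To establish these bounds I would combine Proposition \ref{prop1} with Perrin-Riou's admissibility theorem. Writing $\LT(z) = P_\alpha(z)\, w_\alpha + P_\beta(z)\, w_\beta$ and using $\vp(w_\alpha) = \alpha^{-1} w_\alpha$, $\vp(w_\beta) = \beta^{-1} w_\beta$ together with $\vp(t^j e_{-j}) = p^j\, t^j e_{-j}$, one computes
\[
\vp^{-n}\bigl(\chi^j\omega(\LT(z) \otimes t^j e_{-j})\bigr) \;=\; p^{-nj}\bigl[\alpha^n\, \chi^j\omega(P_\alpha)\, w_\alpha \;+\; \beta^n\, \chi^j\omega(P_\beta)\, w_\beta\bigr] \otimes t^j e_{-j}.
\]
The constraint from Proposition \ref{prop1} that this element lie in $\QQ_{p,n} \otimes \Fil^0\Dcris(T_W(-j))$ — a one-dimensional subspace for $1 \le j \le k-2$ — combined with the lattice-integrality of $\LT$ on the $\Oe$-structure, pins down the $p$-adic size of $\chi^j\omega(P_\alpha)$ and $\chi^j\omega(P_\beta)$ as $O(\alpha^{-n})$ and $O(\beta^{-n})$ respectively, for all $0 \le j \le k-2$ and all $\omega$ of conductor $p^n$. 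Perrin-Riou's admissibility criterion (see e.g. \cite[Section 4]{LLZ2}), which characterizes $\HH_{E,r}(\GG)$ through the growth of specializations at such characters, then forces $P_\alpha \in \HH_{E,v_p(\alpha)}(\GG)$ and $P_\beta \in \HH_{E,v_p(\beta)}(\GG)$. Applying this to $z = z_1, z_2$ yields the required bounds on $A, B, C, D$, and the adjugate identity converts them into the stated growth rates on $a, b, c, d$.

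The main technical point is the passage from the pointwise bounds at the interpolation characters $\chi^j\omega$ to a global membership in $\HH_{E,r}(\GG)$; this step relies on the a priori containment of $\LW(z)$ in some $\HH_{E,r_0}(\GG)$ together with the converse direction of Perrin-Riou's admissibility theorem (see \cite{PR1} and \cite[Section 4]{LLZ2}), which identifies the exact order of growth as the slope of the corresponding Frobenius eigenvalue.
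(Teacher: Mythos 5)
Your approach is genuinely different from the paper's, but it contains a gap that I do not think closes. Let me contrast the two.

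The paper proves the lemma directly from Perrin-Riou's growth theorem for the big logarithm: by \cite[Section 3.2.4]{PR1}, if $f\in(\BrigE)^{\psi=0}\otimes\mathbb{D}_{\nu}$ where $\mathbb{D}_{\nu}$ is a $\vp$-eigenspace of slope $\nu$, then $\Omega_{T,h}(f)$ lies in $\HH_{h+\nu}(\GG)\otimes H^1_{\mathrm{Iw}}$. Taking $h=k-1$ and $\nu=-v_p(\alpha)$ (the slope on $E\,w_\alpha$) gives $\Omega_{T_W,k-1}((1+\pi)\otimes w_\alpha)\in\HH_{E,v_p(\beta)}(\GG)\otimes\HiwT$, and since $\{z_1,z_2\}$ is a $\Lambda_{\Oe}(\GG)$-basis, the coefficients $a,c$ inherit this growth; similarly for $b,d$. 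No adjugate is needed. Indeed, in the paper this lemma is the \emph{input} to Lemma \ref{lem1.12}, which then deduces the growth of $[\LT]_{\Beigen}$ via the adjugate relation. You propose to reverse that flow, so you cannot invoke those bounds and must supply an independent proof, which is where the problem lies.

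The gap is in your use of Proposition \ref{prop1}. That proposition gives a \emph{membership} condition, not magnitude bounds. Concretely, the condition
\[
\vp^{-n}\bigl(\chi^j\omega(\LT(z)\otimes t^je_{-j})\bigr)\in\QQ_{p,n}\otimes\Fil^0\Dcris(T_W(-j))
\]
says that the vector $\alpha^n\chi^j\omega(P_\alpha)\,w_\alpha + \beta^n\chi^j\omega(P_\beta)\,w_\beta$ lies on the one-dimensional line spanned by $w_1\otimes t^je_{-j}$; equivalently, its $w_2$-component in $\Bstd$ vanishes. That is a single linear equation relating the two specialized values. It fixes their \emph{ratio}, not their size, so it does not yield $\chi^j\omega(P_\alpha)=O(\alpha^{-n})$ and $\chi^j\omega(P_\beta)=O(\beta^{-n})$. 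The appeal to ``lattice-integrality of $\LT$'' does not repair this: the regulator lands in $\HH_E(\GG)\otimes\Dcris(T_W)$, not in $\Lambda_{\Oe}(\GG)\otimes\Dcris(T_W)$, precisely because it is unbounded in the non-ordinary case. Consequently the Amice--Velu/Vishik/Perrin-Riou admissibility criterion cannot be invoked in the way you describe, since it requires magnitude estimates at the interpolation characters as input, and those are exactly what is missing. What Proposition \ref{prop1} \emph{does} give is divisibility of the relevant entries by $\Phi_{n-1,k-1}(\gamma_0)$, which is how the paper uses it in Proposition \ref{prop2} — a different conclusion. The direct citation of Perrin-Riou's eigenspace growth theorem is the simpler and correct route.
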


\begin{proof}
From \cite[Section 3.2.4]{PR1}, we note that for any $\Oe$-lattice $T$ in a crystalline representation $V$, if $f$ is an element of $(\BrigE)^{\psi=0}\otimes \mathbb{D}_{\nu}(T(j))$, where $\mathbb{D}_{\nu}(T(j))$ is a subspace of $\Dcris(T(j))$ in which $\vp$ has slope $\nu$, then $\Omega_{T(j), h+j}(f)$ is $O(\log_{p}^{h+\nu})$ and $h\in\ZZ_{>0}$ such that $\mathrm{Fil}^{-h}(\Dcris(T))=\Dcris(T)$. 
In other words, $\Omega_{T(j),h+j}(f)$ lies in $\HH_{h+\nu}(\GG)\otimes H^{1}_{\mathrm{Iw}}(\Qp, T(j))$.

For the crystalline representation $W$ and the lattice $T_W$ in $W$, we know that $\vp(w_{\alpha})=\alpha^{-1}w_{\alpha}$ and $\vp(w_\beta)=\beta^{-1}w_{\beta}$. Thus, $\Omega_{T_{W}, k-1}((1+\pi)\otimes w_{\alpha})$ is $O(\log^{(k-1)-v_{p}(\alpha)}_{p}) = O(\log^{v_{p}(\beta)}_{p})$, since $v_{p}(\alpha) + v_{p}(\beta)=k-1$. 
Similarly, $\Omega_{T_{W}, k-1}((1+\pi)\otimes w_{\beta})$ is $O(\log^{v_{p}(\alpha)}_{p})$.

But $\Omega_{T_{W}, k-1}((1+\pi)\otimes w_{\alpha}) = a\cdot z_{1} + c\cdot z_2 \in \HH_{E,v_{p}(\beta)}(\GG)\otimes \HiwT$. 
Therefore we conclude that $a$ and $c$ have growth $O(\log_{p}^{v_{p}(\beta)})$.
In the same manner, $b$ and $d$ have growth $O(\log_{p}^{v_{p}(\alpha)})$.
\end{proof}

\subsubsection{The matrix of the $p$-adic regulator $\LT$}

After applying the $p$-adic regulator $\LT$ on the $\Lambda_{\Oe}(\GG)$-basis $z_{1}, z_{2}$ of $\HiwT$, we get 
\begin{align*}
    \LT(z_{1}) &= x_{1}\cdot w_{\alpha} + x_{3}\cdot w_{\beta},\\
    \LT(z_{1}) &= x_{2}\cdot w_{\alpha} + x_{4}\cdot w_{\beta}.
\end{align*}
We can rewrite these equations as
\begin{equation}
    \begin{bmatrix} \LT(z_{1}) & \LT(z_{2})\end{bmatrix} = \begin{bmatrix} w_{\alpha} & w_{\beta}\end{bmatrix} \begin{pmatrix} x_1 & x_2 \\[6pt] x_3 & x_4\end{pmatrix}.
\end{equation}

Hence, using the basis $\{z_{1}, z_{2}\}$ for $\HiwT$ and the basis $\Beigen$ for $\Dcris(T_W)$, we get a matrix $[\LT]_{\Beigen} \in M_{2,2}(\HH_{E}(\GG))$ of $\LT$ as
\begin{equation}
    [\LT]_{\Beigen} = \begin{pmatrix} x_1 & x_2 \\[6pt] x_3 & x_4\end{pmatrix} .\label{19}
\end{equation}

\begin{lem}
\label{lem1.12}
We have the equation
\begin{equation}
        [\LT]_{\Beigen} = \adj M_{\Omega},
    \end{equation}
    where $\adj M_{\Omega}$ is the adjugate matrix of $M_\Omega$.
    In particular, $x_{1},x_{2}$ are $O(\log_{p}^{v_{p}(\alpha)})$ and $x_{3},x_{4}$ are $O(\log_{p}^{v_{p}(\beta)})$.
\end{lem}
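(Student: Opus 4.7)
The plan is to extract $[\LT]_{\Beigen} = \adj M_{\Omega}$ directly from the composition identity between $\Omega_{W,k-1}$ and $\LW$ proved in Lemma~\ref{lemma1}, together with the determinant computation of Lemma~\ref{lemma2}, and then to read off the growth bounds on $x_{1},\ldots,x_{4}$ from the entries of $M_{\Omega}$ controlled by Lemma~\ref{lemma3}.

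The first step is to translate Lemma~\ref{lemma1} into a matrix identity. With respect to the bases $\{(1+\pi)\otimes w_{\alpha},\,(1+\pi)\otimes w_{\beta}\}$ of $(\BrigE)^{\psi=0}\otimes\Dcris(T_{W})$, $\{z_{1},z_{2}\}$ of $\HiwT$, and $\Beigen$ of $\Dcris(T_{W})$, the maps $\Omega_{W,k-1}$ and $\LW$ are represented by $M_{\Omega}$ and $[\LT]_{\Beigen}$ respectively; identifying $\HH_{E}(\GG)\otimes\Dcris(W)$ with $(\BrigE)^{\psi=0}\otimes\Dcris(W)$ via the Mellin transform so that the composition makes sense, Lemma~\ref{lemma1} becomes the matrix equation
\begin{equation*}
M_{\Omega}\cdot[\LT]_{\Beigen} \;=\; \Big(\prod_{i=0}^{k-2}\ell_{i}\Big)\cdot I_{2}.
\end{equation*}
Taking determinants and using Lemma~\ref{lemma2}, which supplies $\det([\LT]_{\Beigen})=\prod_{i=0}^{k-2}\ell_{i}$, forces $\det(M_{\Omega})=\prod_{i=0}^{k-2}\ell_{i}$ as well (the product is a nonzero element of $\HH_{E}(\GG)$, so cancellation is permitted). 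The classical identity $M_{\Omega}^{-1}=(\det M_{\Omega})^{-1}\adj M_{\Omega}$ then yields $[\LT]_{\Beigen}=\adj M_{\Omega}$, which is the first assertion.

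For the growth claim, Lemma~\ref{lemma3} records that the first column $(a,c)^{T}$ of $M_{\Omega}$ is $O(\log_{p}^{v_{p}(\beta)})$ while the second column $(b,d)^{T}$ is $O(\log_{p}^{v_{p}(\alpha)})$. Since $\adj M_{\Omega}=\begin{pmatrix}d & -b\\ -c & a\end{pmatrix}$, the top row $(x_{1},x_{2})=(d,-b)$ inherits growth $O(\log_{p}^{v_{p}(\alpha)})$ and the bottom row $(x_{3},x_{4})=(-c,a)$ inherits growth $O(\log_{p}^{v_{p}(\beta)})$, which is exactly what is claimed. The main obstacle I anticipate is the bookkeeping around the Mellin identification: one must verify that composing the $\Lambda_{E}(\GG)$-linear map $\LW$ (landing in $\HH_{E}(\GG)\otimes\Dcris(W)$) with $\Omega_{W,k-1}$ (whose natural domain is $(\BrigE)^{\psi=0}\otimes\Dcris(W)$) really is captured by matrix multiplication in the chosen bases. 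Once this compatibility is pinned down, the remainder of the argument is purely linear-algebraic.
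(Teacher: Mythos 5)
Your proposal is correct and follows essentially the same route as the paper: translate Lemma~\ref{lemma1} into the matrix identity $M_{\Omega}[\LT]_{\Beigen}=(\prod_{i=0}^{k-2}\ell_{i})I_{2}$, combine it with the determinant formula of Lemma~\ref{lemma2} to identify $[\LT]_{\Beigen}$ with $\adj M_{\Omega}$, and then read off the growth estimates from Lemma~\ref{lemma3}. The only cosmetic difference is that you make the step $\det M_{\Omega}=\prod_{i}\ell_{i}$ explicit before invoking the adjugate formula, whereas the paper passes directly from $M_{\Omega}[\LT]_{\Beigen}=\det([\LT]_{\Beigen})I_{2}$ to the conclusion.
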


\begin{proof}

From Lemma \ref{lemma1}, we know that
$$\OW\circ\LW = \left(\prod_{i=0}^{k-2} \ell_{i}\right).$$
By restricting to the $\Oe$-lattice $T_W$ in $W$, we have, for any $z \in \HiwT$,
\begin{align*}
    \OT(\LT(z)) = \left(\prod_{i=0}^{k-2} \ell_{i}\right)(z).
\end{align*}
Thus, by using the $\Lambda_{\Oe}(\GG)$-basis $\{z_1, z_{2}\}$ for $\HiwT$, we get
  \begin{align}
       \OT(\LT(z_{1})) &= \left(\prod_{i=0}^{k-2} \ell_{i}\right)(z_{1}),\label{eq1}\\[6pt]
        \OT(\LT(z_{2})) &= \left(\prod_{i=0}^{k-2} \ell_{i}\right)(z_{2}).\label{eq2}
  \end{align}
In matrix form, we can rewrite the equations \eqref{eq1} and \eqref{eq2}
  \begin{equation*}
      M_{\Omega}[\LT]_{\Beigen}= \left(\prod_{i=0}^{k-2} \ell_{i}\right) I_{2}. 
  \end{equation*}
  From Lemma \ref{lemma2}, we have $\det(\LT) = \prod_{i=0}^{k-2}\ell_{i}$, hence we have
  $\det([\LT]_{\Beigen})= \prod_{i=0}^{k-2}\ell_{i}.$ 
  Thus
  $$M_{\Omega}[\LT]_{\Beigen} = \det(\LT) I_{2}.$$

  Hence, 
  \begin{equation}
      \begin{pmatrix} x_1 & x_2 \\[6pt] x_3 & x_4\end{pmatrix}=[\LT]_{\Beigen} = \adj M_{\Omega} = \begin{pmatrix} d & -b \\[6pt] -c & a \end{pmatrix}, 
  \end{equation}
  where $\adj M_{\Omega}$ is the adjugate matrix of the matrix $M_{\Omega}$.
  Thus, from Lemma \ref{lemma3}, we get $x_{1}, x_{2}$ have growth $O(\log_{p}^{v_{p}(\alpha)})$ and $x_{3},x_{4}$ have growth $O(\log_{p}^{v_{p}(\beta)})$. 
\end{proof}

We use the basis $\Bstd$ of $\Dcris(T_W)$ and the basis $\{z_{1}, z_{2}\}$ for $\HiwT$ to get another matrix $[\LT]_{\Bstd}$ such that
$$\begin{bmatrix} \LT(z_{1}) & \LT(z_{2}) \end{bmatrix} = \begin{bmatrix} w_{1} & w_{2}\end{bmatrix} [\LT]_{\Bstd}.$$
Since $\begin{bmatrix} w_{1} & w_{2}\end{bmatrix} = \begin{bmatrix} w_{\alpha} & w_{\beta}\end{bmatrix}Q^{-1}$, we have
\begin{equation}
    [\LT]_{\Beigen} = Q^{-1}[\LT]_{\Bstd}. 
\end{equation}

For any non-negative integer $n$, we write $\omega_{n}(1+X) = (1+X)^{p^n}-1$. 
Let $\Phi_{n}(1+X)=\omega_{n}(1+X)/\omega_{n-1}(1+X)$ be the $p^{n}$-th cyclotomic polynomial for integers $n>1$.
Recall from Section 2, topological generators $\gamma_{0}$ of $\GG_{1}$ and $u$ of $1+p\Zp$ such that $\chi(\gamma_{0})=u$, where $\chi$ is the $p$-adic cyclotomic character. 
For any integer $m\geq 1$, we define
\begin{align*}
    \Phi_{n,m}(1+X) &= \prod_{j=0}^{m-1}\Phi_{n}(u^{-j}(1+X)),\\
    \omega_{n,m}(1+X) &= \prod_{j=0}^{m-1} \omega_{n}(u^{-j}(1+X)),\\
    \delta_{m}(X) &= \prod_{j=0}^{m-1}(u^{-j}(1+X)-1),\\
    \log_{p,m}(1+X) &= \prod_{j=0}^{m-1}\log_{p}(u^{-j}(1+X)). 
 \end{align*}
    
\begin{rem}
    \label{logproplem}
    Note that the zeros of  $\log_{p,m}(1+X)$, all having simple order, are of the form $u^{j}\zeta - 1$, where $\zeta$ is a $p^{n}$-th root of unity, for all $n\geq 1$ and for all $0\leq j\leq m-1$.
\end{rem}

Recall from Proposition \ref{prop1}, for any Dirichlet character $\omega$ of conductor $p^{n}, n>1$, we have
$$\vp^{-n}(\chi^{i}\omega(\LT(z) \otimes t^{i}e_{-i})) \in \QQ_{p,n} \otimes \mathrm{Fil}^{0}\Dcris(T_{W}(-i)),$$
for any $z\in \HiwT$ and $0 \leq i \leq (k-2)$.

\begin{prop}
    \label{prop2}
    The second row of the matrix $[\vp^{-n}\LT]_{\Bstd}$ is divisible by $\Phi_{n-1,k-1}(\gamma_{0})$ over $\HH_{E}(\Gamma)$, for all integers $n>1$.
\end{prop}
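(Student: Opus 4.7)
The plan is to deduce the divisibility from the explicit description of $\mathrm{Fil}^{0}\Dcris(W(-i))$ together with the vanishing statements in Proposition \ref{prop1}. First, I compute the filtration on the twists. Since $W = V^{*}_{k,v^{1/2}a}(\chi^{k-1}\otimes\eta)$ has Hodge--Tate weights $0$ and $k-1$, the description of the filtration on $D_{k,v^{1/2}a}$ from Section 3.1, together with the shift under the twist $W(-i) = W\otimes\chi^{-i}$, yields
\begin{equation*}
\mathrm{Fil}^{0}\Dcris(W(-i)) \;=\; \mathrm{Fil}^{-i}\Dcris(W)\otimes t^{i}e_{-i} \;=\; E\cdot w_{1}\otimes t^{i}e_{-i},
\end{equation*}
a one-dimensional subspace for every $0\le i\le k-2$. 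In particular the component of any element of $\mathrm{Fil}^{0}\Dcris(W(-i))$ along $w_{2}\otimes t^{i}e_{-i}$ vanishes.

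Next, I translate this into a statement about the matrix $[\vp^{-n}\LT]_{\Bstd}$. Writing $[\LT]_{\Bstd}=(y_{ij})$ and expanding $\LT(z_{j})\otimes t^{i}e_{-i}$ in the basis $\{w_{1}\otimes t^{i}e_{-i},\,w_{2}\otimes t^{i}e_{-i}\}$ of $\Dcris(T_{W}(-i))$, the evaluation at $\chi^{i}\omega$ produces a scalar vector, and a direct calculation (using that $\vp$ acts on this basis by the matrix $p^{i}A_{\vp}$, since $\vp(t^{i}e_{-i})=p^{i}t^{i}e_{-i}$) shows that the $(w_{2}\otimes t^{i}e_{-i})$-coefficient of $\vp^{-n}\bigl(\chi^{i}\omega(\LT(z_{j})\otimes t^{i}e_{-i})\bigr)$ is exactly $p^{-ni}$ times the value at $\chi^{i}\omega$ of the $(2,j)$-entry of $A_{\vp}^{-n}[\LT]_{\Bstd}=[\vp^{-n}\LT]_{\Bstd}$. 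By Proposition \ref{prop1} this coefficient must vanish, so the two entries of the second row of $[\vp^{-n}\LT]_{\Bstd}$ are killed by every character of the form $\chi^{i}\omega$ with $0\le i\le k-2$ and $\omega$ a Dirichlet character of conductor $p^{n}$.

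Finally, I convert this pointwise vanishing into divisibility in $\HH_{E}(\GG)$. As $\omega$ ranges over characters of conductor exactly $p^{n}$, the values $\omega(\gamma_{0})$ range over the primitive $p^{n-1}$-th roots of unity, so the values $\chi^{i}\omega(\gamma_{0})=u^{i}\omega(\gamma_{0})$ are precisely the roots of $\Phi_{n-1}(u^{-i}(1+X))$ under the identification $\gamma_{0}\leftrightarrow 1+X$. Taking the product over $0\le i\le k-2$ gives $\Phi_{n-1,k-1}(1+X)$, and the distinctness of all these roots (the $u^{i}\zeta$ are pairwise distinct for different $(i,\zeta)$ because $u$ is a topological generator of $1+p\Zp$ and the relevant $\zeta$ are $p^{n-1}$-th roots of unity) means that each simple root of $\Phi_{n-1,k-1}(\gamma_{0})$ is a zero of the second-row entries. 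Since these entries lie in $\HH_{E}(\GG)$, and $\Phi_{n-1,k-1}(\gamma_{0})$ is a product of linear factors in $\HH_{E}(\GG)$ with distinct simple zeros in the open unit disk, the Weierstrass division theorem in $\HH_{E}(\GG)$ yields the claimed divisibility.

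The main obstacle is the bookkeeping in the second step: one must carefully track how $\vp^{-n}$ interacts with the twist by $\chi^{-i}$ to identify the $w_{2}\otimes t^{i}e_{-i}$-component with the correct matrix entry; once this identification is in hand, the remaining argument is essentially a standard conversion of interpolation zeros to a divisibility statement.
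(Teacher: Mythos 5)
Your proof is correct and follows essentially the same route as the paper's: use the one-dimensionality of $\mathrm{Fil}^{-i}\Dcris(W)=E\cdot w_{1}$ to identify $\mathrm{Fil}^{0}\Dcris(W(-i))$, then combine the relation $[\vp^{-n}\LT]_{\Bstd}=A_{\vp}^{-n}[\LT]_{\Bstd}$ with the vanishing from Proposition \ref{prop1} to conclude that the second-row entries kill all characters $\chi^{i}\omega$, and finally convert interpolation zeros into divisibility by $\Phi_{n-1,k-1}(\gamma_{0})$. The one genuine deviation is in this last step: the paper outsources it to \cite[Theorem 5.4]{LLZ1}, whereas you reprove it directly via the distinctness of the roots $u^{i}\zeta-1$ and Weierstrass division. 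That direct argument is fine in spirit, but two small points deserve care. First, $\Phi_{n-1,k-1}(1+X)$ need not split into linear factors over $E$ (it does over $\overline{\Qp}$); the correct statement is that it is a monic polynomial in $\OO_{E}[X]$ with distinct roots in the open unit disk, and divisibility over $\overline{\Qp}$ descends to $E[[X]]$. Second, an element of $\HH_{E}(\GG)$ is a power series over $E[\Delta]$, so the pointwise vanishing must be organized by $\Delta$-isotypic component: for each $\Delta$-character $\theta$ and each $0\le i\le k-2$, there are characters $\omega$ of conductor exactly $p^{n}$ with $(\chi^{i}\omega)|_{\Delta}=\theta$ and $\omega(\gamma_{0})$ ranging over all primitive $p^{n-1}$-th roots of unity, which is what forces divisibility of each isotypic component; the paper sidesteps this bookkeeping via the citation. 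With those small clarifications your argument matches the paper's.
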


\begin{proof}
    The Hodge--Tate weights of the crystalline representation $W$ are $0$ and $k-1$. Thus, for $0 \leq i \leq k-2$, we have
    $$\dim_{E}(\mathrm{Fil}^{0}\Dcris(W(-i))) = \dim_{E}(\mathrm{Fil}^{-i}\Dcris(W))=1.$$
    We know that $\mathrm{Fil}^{-i}\Dcris(T_W)$ is a rank one free $\Oe$-submodule of $\Dcris(T_W)$ generated by $w_{1}$ for all $0\leq i \leq k-2$. 
    Thus, $\mathrm{Fil}^{0}\Dcris(T_{W}(-i))$ is generated by $w_{1}\otimes t^{i}e_{-i}$. 

    Write
    \begin{equation}
        \vp^{-n}(\LT(z)\otimes t^{i}e_{-i}) = F_{1,z}\cdot (w_{1}\otimes p^{-ni}t^{i}e_{-i}) + F_{2,z}\cdot (w_{2}\otimes p^{-ni}t^{i}e_{-i}),
    \end{equation}
    where $F_{1,z}, F_{2,z}\in \HH_{E}(\GG)$ and $z\in \HiwT$.

    Thus, \eqref{16} implies 
    \begin{equation}
        F_{2,z}(\chi^{i}\omega)=0, \label{25}
    \end{equation}
    for all $0 \leq i \leq k-2$ and for all Dirichlet character $\omega$ of conductor $p^n$, where $n>1$. Then \cite[Theorem 5.4]{LLZ1} implies $\Phi_{n-1,k-1}(\gamma_{0})$ divides $F_{2,z}$. 

     Using the basis $z_{1}, z_{2}$ for $\HiwT$ and the basis $\Bstd$ for $\Dcris(T_W)$, we get 
    \begin{equation*}
        \begin{bmatrix} \vp^{-n}(\LT(z_{1}) \otimes t^{i}e_{-i}) & \vp^{-n}(\LT(z_{2}) \otimes t^{i}e_{-i})\end{bmatrix} = \begin{bmatrix} w_{1}\otimes t^{i}e_{-i} & w_{2}\otimes t^{i}e_{-i}\end{bmatrix} \begin{pmatrix} F_{1,z_{1}} & F_{1,z_{2}} \\[6pt] F_{2,z_{1}} & F_{2,z_{2}} \end{pmatrix}.
    \end{equation*}
    Then the matrix of $\vp^{-n}\LT$ with respect to basis $\Bstd$ is
    $$[\vp^{-n}\LT]_{\Bstd} = \begin{pmatrix} F_{1,z_{1}} & F_{1,z_{2}} \\[6pt] F_{2,z_{1}} & F_{2,z_{2}} \end{pmatrix}.$$
    Note that $[\vp^{-n}\LT]_{\Bstd} \in M_{2,2}(\HH(\GG))$. 
    Hence, using \eqref{25}, we deduce that $\Phi_{n-1,k-1}(\gamma_{0})$ divides both $F_{2,z_1}$ and $F_{2,z_2}$
\end{proof}

For $n>1$, we can write
\begin{align*}
    \begin{bmatrix} \vp^{-n}(\LT(z_{1})) & \vp^{-n}(\LT(z_{2}))\end{bmatrix} &= \begin{bmatrix} \vp^{-n}(w_{\alpha}) & \vp^{-n}(w_{\beta}) \end{bmatrix}[\LT]_{\Beigen},\\
    &= \begin{bmatrix} w_{\alpha} & w_{\beta}\end{bmatrix}\begin{pmatrix} \alpha^{n} & 0 \\[6pt] 0 & \beta^{n}\end{pmatrix}[\LT]_{\Beigen}.
\end{align*}
Thus, we get a matrix for $\vp^{-n}\LT$ with respect to the eigenbasis $\Beigen$ for $\Dcris(T_W)$ and using \eqref{19}, we get
\begin{equation}
    [\vp^{-n}\LT]_{\Beigen} = \begin{pmatrix} \alpha^{n} & 0 \\[6pt] 0 & \beta^{n}\end{pmatrix}[\LT]_{\Beigen} = \begin{pmatrix} \alpha^{n} x_{1} & \alpha^{n} x_{2} \\[6pt] \beta^{n} x_{3} & \beta^{n} x_{4}\end{pmatrix}. \label{29}
\end{equation}

\section{Logarithmic matrix $\Mbar$ and the factorization of power series in one variable}
\label{sec5}
In this section, we will first explore some properties of $\Mbar$ which imply that $\Mbar$ is a logarithmic matrix in the sense of Sprung and Lei--Loeffler--Zerbes. 
Next, we will use $\Mbar$ to decompose power series with certain growth conditions into power series with bounded coefficients.  

\subsection{Properties of $\Mbar$}
For any $z\in \HiwT$, we have
$$\LT(z) = \begin{bmatrix} (1+\pi)\otimes w_{1} & (1+\pi)\otimes w_{2} \end{bmatrix} \Mbar \begin{pmatrix} \Col_{1}(z) \\[6pt] \Col_{2}(z)\end{pmatrix}.$$

In matrix form, we write
$$\begin{bmatrix}\LT(z_1) & \LT(z_2)\end{bmatrix} = \begin{bmatrix} (1+\pi)\otimes w_{1} & (1+\pi)\otimes w_{2} \end{bmatrix} \Mbar \begin{pmatrix} \Col_{1}(z_{1}) & \Col_{1}(z_{2}) \\[6pt] \Col_{2}(z_{1}) & \Col_{2}(z_{2})\end{pmatrix}.$$
Thus, 
\begin{equation}
    [\LT]_{\Bstd} = \Mbar\begin{pmatrix} \Col_{1}(z_{1}) & \Col_{1}(z_{2}) \\[6pt] \Col_{2}(z_{1}) & \Col_{2}(z_{2})\end{pmatrix}. 
\end{equation}
Similarly, we have
\begin{equation}
    [\LT]_{\Beigen} = Q^{-1}\Mbar\begin{pmatrix} \Col_{1}(z_{1}) & \Col_{1}(z_{2}) \\[6pt] \Col_{2}(z_{1}) & \Col_{2}(z_{2})\end{pmatrix}, \label{ltbeigen}
\end{equation}
since $[\LT]_{\Beigen} = Q^{-1}[\LT]_{\Bstd}.$

\begin{prop}
    \label{prop4}
    The elements in the first row of $Q^{-1}\Mbar$ are inside $\HH_{E,v_{p}(\alpha)}(\GG_{1})$, while the elements in the second row are in the $\HH_{E,v_{p}(\beta)}(\GG_{1})$. 
\end{prop}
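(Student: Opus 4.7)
The plan is to exploit equation \eqref{ltbeigen} to identify the entries of $Q^{-1}\Mbar$ with coordinates of values of $\LT$ expressed in the eigenbasis $\Beigen$, whose growth is already controlled by Lemma \ref{lem1.12}. Crucially, $Q^{-1}\Mbar$ is intrinsic and does not depend on the choice of basis $\{z_{1},z_{2}\}$ for $\HiwT$, so I have the freedom to pick whatever $\Lambda_{E}(\GG)$-basis of $\Hiw$ is most convenient for controlling its rows.

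Writing $Q^{-1}\Mbar = (m_{ij})_{i,j=1,2}$, equation \eqref{ltbeigen} gives coordinatewise
\begin{equation*}
\LT(z) = \bigl(m_{11}\Col_{1}(z) + m_{12}\Col_{2}(z)\bigr)\,w_{\alpha} + \bigl(m_{21}\Col_{1}(z) + m_{22}\Col_{2}(z)\bigr)\,w_{\beta}
\end{equation*}
for every $z \in \HiwT$, and this extends to $\Hiw$ by $\Lambda_{E}(\GG)$-linearity. Applying Lemma \ref{lemma3} to the basis $\{z_1,z_2\}$ and propagating $\Lambda_{E}(\GG)$-linearly, the $w_{\alpha}$-coordinate of $\LT(z)$ has growth $O(\log_{p}^{v_{p}(\alpha)})$ and the $w_{\beta}$-coordinate has growth $O(\log_{p}^{v_{p}(\beta)})$ for all $z \in \Hiw$. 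I would then invoke surjectivity of the Coleman map $\Col \colon \Hiw \to \Lambda_{E}(\GG)^{\oplus 2}$ to find $y_{1}, y_{2} \in \Hiw$ with $\Col(y_{j}) = e_{j}$, the standard basis vectors. Since $\mathfrak{J}$ is already an isomorphism by Theorem \ref{thm1.6}, this reduces to surjectivity of $1-\vp$ on $\NN(W)^{\psi=1}$; injectivity is immediate (there are no $\vp$-fixed vectors in $\NN(W)\otimes E$, because the eigenvalues $\alpha^{-1}, \beta^{-1}$ of $\vp$ both differ from $1$ under $v_{p}(a)>0$), and surjectivity follows from the standard Wach-module exact sequence for $\HiwT$ combined with rank-$2$ freeness over $\Lambda_{\Oe}(\GG)$. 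Substituting $z = y_{j}$ into the coordinate identity gives
\begin{equation*}
\LT(y_{1}) = m_{11}\,w_{\alpha} + m_{21}\,w_{\beta}, \qquad \LT(y_{2}) = m_{12}\,w_{\alpha} + m_{22}\,w_{\beta},
\end{equation*}
so the entries of the first row of $Q^{-1}\Mbar$ are precisely the $w_{\alpha}$-coordinates of $\LT(y_{j})$ and the entries of the second row are the corresponding $w_{\beta}$-coordinates. The growth estimate yields $m_{1j} \in \HH_{E,v_{p}(\alpha)}(\GG)$ and $m_{2j} \in \HH_{E,v_{p}(\beta)}(\GG)$, and the sharpening from $\GG$ to $\GG_{1}$ is automatic because $Q^{-1}\Mbar$ already lies in $M_{2,2}(\HH_{E}(\GG_{1}))$ by the remark following the definition of $\Mbar$.

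I expect the main obstacle to be surjectivity of the Coleman map in the form required to extract the entries of $Q^{-1}\Mbar$ as $\LT$-coordinates. If integral surjectivity over $\Lambda_{\Oe}(\GG)$ proves subtle, the argument can still be pushed through by taking $y_{j} \in \Hiw$ with $\Col(y_{j}) = \xi\, e_{j}$ for some nonzero $\xi \in \Lambda_{E}(\GG)$, deducing first that $\xi\, m_{ij}$ lies in the claimed growth class, and then dividing by $\xi$ using that $m_{ij} \in \HH_{E}(\GG_{1})$ is already known; equivalently, one can work rationally throughout, where the Coleman map is an isomorphism onto $\Lambda_{E}(\GG)^{\oplus 2}$ by a rank comparison once injectivity has been established.
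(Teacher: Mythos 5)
Your approach is essentially the paper's: both combine the growth bounds on $[\LT]_{\Beigen}$ from Lemma \ref{lem1.12} with the matrix identity \eqref{ltbeigen}, and both end by dividing out a bounded element. The paper right-multiplies \eqref{ltbeigen} by $\adj[\mathrm{Col}^{\Delta}]$ to land on $Q^{-1}\Mbar\cdot\det([\mathrm{Col}^{\Delta}])$ and then divides by the bounded $\det([\mathrm{Col}^{\Delta}])$; you instead look for explicit preimages $y_j$ of the standard basis vectors under $\Col$. These are the same calculation in different clothing --- your $y_j$, written in the basis $\{z_1,z_2\}$, would have for coordinates the columns of $\adj[\mathrm{Col}]$ up to the factor $\det[\mathrm{Col}]$ --- so the mathematical content is identical; the adjugate phrasing simply avoids the surjectivity discussion altogether.

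Two points need tightening before this can stand. First, you cannot conclude that $\Col$ is surjective, and the assertion that ``the Coleman map is an isomorphism onto $\Lambda_{E}(\GG)^{\oplus 2}$ by a rank comparison once injectivity has been established'' is false: an injection between free modules of equal rank over a ring like $\Lambda_{E}(\GG)$ need not be onto, it only forces the cokernel to be torsion. Your fallback --- choosing $y_j$ with $\Col(y_j)=\xi e_j$ and dividing by $\xi$ at the end --- is the correct version of the argument and should be the argument, not a contingency; it is precisely what the adjugate manipulation achieves with $\xi=\det[\mathrm{Col}]$. Second, $\Lambda_{E}(\GG)\cong E[\Delta][[\GG_{1}]]$ is not an integral domain, so ``nonzero $\xi$'' must be strengthened to ``non-zero-divisor $\xi$'', or one should decompose into $\Delta$-isotypic components and work in each $\Lambda_{E}(\GG_{1})$-factor separately, which is exactly why the paper's proof passes to the trivial isotypic component: this both produces a domain in which the needed $\xi$ exists and justifies the final division (a non-zero-divisor bounded element $\xi$ can, by Weierstrass preparation, be divided out without increasing the growth class of $\xi m_{ij}$, given that $m_{ij}\in\HH_{E}(\GG_{1})$ is already known).
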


\begin{proof}
    Recall from Lemma \ref{lem1.12},  
    $$[\LT]_{\Beigen}=\adj M_{\Omega} = \begin{pmatrix} x_{1} & x_{2} \\[6pt] x_{3} & x_{4} \end{pmatrix}.$$
    Therefore, \eqref{ltbeigen} implies
    \begin{equation*}
        \begin{pmatrix} x_{1} & x_{2} \\[6pt] x_{3} & x_{4} \end{pmatrix} = Q^{-1}\Mbar\begin{pmatrix} \Col_{1}(z_{1}) & \Col_{1}(z_{2}) \\[6pt] \Col_{2}(z_{1}) & \Col_{2}(Z_{2})\end{pmatrix}.\label{mbar1}
    \end{equation*}
    But $\Col_{i}(z_{j})$ are $O(1)$ for $i,j\in\{1,2\}$, since they lie in the Iwasawa algebra $\Lambda_{E}(\GG)$. 
    Therefore, after writing $Q^{-1}\Mbar = \begin{pmatrix} P_{1} & P_{2} \\[6pt] P_{3} & P_{4}\end{pmatrix}$, we get
    \begin{equation}
        \begin{pmatrix} x_{1} & x_{2} \\[6pt] x_{3} & x_{4} \end{pmatrix} = \begin{pmatrix} P_{1} & P_{2} \\[6pt] P_{3} & P_{4}\end{pmatrix} \begin{pmatrix} \Col_{1}(z_{1}) & \Col_{1}(z_{2}) \\[6pt] \Col_{2}(z_{1}) & \Col_{2}(Z_{2})\end{pmatrix}.\label{mbar1}
    \end{equation}
   
    We take isotypic components on both sides with respect to the trivial character of $\Delta$.
    After writing $[\mathrm{Col}^{\Delta}]$ for $\begin{pmatrix} \Col^{\Delta}_{1}(z_{1}) & \Col^{\Delta}_{1}(z_{2}) \\[6pt] \Col^{\Delta}_{2}(z_{1}) & \Col^{\Delta}_{2}(Z_{2})\end{pmatrix}$, equation \eqref{mbar1} becomes 
    \begin{equation}
        \begin{pmatrix} x_{1}^{\Delta} & x_{2}^{\Delta} \\[6pt] x_{3}^{\Delta} & x_{4}^{\Delta} \end{pmatrix} \adj [\mathrm{Col}^{\Delta}] = \begin{pmatrix} P_{1} & P_{2} \\[6pt] P_{3} & P_{4}\end{pmatrix} \det([\mathrm{Col}^{\Delta}]).
    \end{equation}
    The result follows from the Lemma \ref{lem1.12}, since $\det([\mathrm{Col}^{\Delta}])$ is again $O(1)$ and $x_{1}^{\Delta}, x_{2}^{\Delta} \in \HH_{E,v_{p}(\alpha)}(\GG_{1})$ and $x_{3}^{\Delta}, x_{4}^{\Delta} \in \HH_{E,v_{p}(\beta)}(\GG_{1})$.    
\end{proof}

\begin{lem}
\label{lem5}
    The second row of $A_{\vp}^{-n}\Mbar$ is divisible by the cyclotomic polynomial $\Phi_{n-1,k-1}(\gamma_{0})$ over $\HH_{E}(\GG_{1})$. 
\end{lem}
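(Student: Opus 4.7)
The plan is to derive the divisibility from Proposition \ref{prop2} by transferring it across the factorisation $[\vp^{-n}\LT]_{\Bstd} = A_\vp^{-n}\Mbar\cdot[\Col]$, where $[\Col]$ denotes the matrix whose $(i,j)$-entry is $\Col_i(z_j)$ on the fixed basis $z_1, z_2$ of $\HiwT$. Writing $(B_3, B_4)$ for the second row of $A_\vp^{-n}\Mbar$, the product $(B_3, B_4)\cdot[\Col]$ equals the second row of $[\vp^{-n}\LT]_{\Bstd}$, which by Proposition \ref{prop2} has both entries divisible by $\Phi_{n-1,k-1}(\gamma_0)$ in $\HH_{E}(\GG)$.

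The next step is to cancel the right factor $[\Col]$. Multiplying by $\adj[\Col]$ yields
\[
(B_3,\, B_4)\,\det[\Col] \;\equiv\; (0,\,0) \pmod{\Phi_{n-1,k-1}(\gamma_0)}.
\]
Since $\Mbar$ has entries in $\HH_{E}(\GG_1)$ (by the remark following its definition) and $A_\vp^{-n}$ is scalar, $B_3$ and $B_4$ already lie in $\HH_{E}(\GG_1)$; passing to the trivial $\Delta$-isotypic component keeps the computation inside the Bezout domain $\HH_{E}(\GG_1)$, with $\det[\Col]^{\Delta}\in\Lambda_{\Oe}(\GG_1)$. Divisibility of $(B_3, B_4)$ by $\Phi_{n-1,k-1}(\gamma_0)$ then follows from coprimality of $\det[\Col]^{\Delta}$ and $\Phi_{n-1,k-1}(\gamma_0)$ in $\HH_{E}(\GG_1)$, equivalently from the non-vanishing of $\det[\Col]$ at every character $\chi^i\omega$ with $0\leq i\leq k-2$ and $\omega$ of conductor $p^n$.

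This coprimality step is the main obstacle. The tempting identity $\det\Mbar\cdot\det[\Col] = \prod_{j=0}^{k-2}\ell_j$, which follows from $[\LT]_{\Bstd} = \Mbar\cdot[\Col]$ and Lemma \ref{lemma2}, does not settle the matter on its own: at each $\chi^i\omega$ the factor $\ell_i$ vanishes (because $\log\omega(\gamma_0) = 0$ for a $p$-power root of unity), so the simple zero of the product could a priori sit on either determinant. To resolve this I would either appeal to the interpolation of $\Col$ at $\chi^i\omega$ in terms of Bloch-Kato exponentials and dual exponentials, which together pair non-degenerately with the two basis vectors of $\Dcris(T_W)$ and thus force $[\Col](\chi^i\omega)\in\mathrm{GL}_2(E)$; or argue by a growth dichotomy, using that $\det[\Col] = O(1)$ while $\det\Mbar = O(\log_p^{k-1})$ matches the growth of $\prod_{j=0}^{k-2}\ell_j$ (via Proposition \ref{prop4}), so an order-of-vanishing count at each $\chi^i\omega$ should force all zeros of the product into $\det\Mbar$, leaving $\det[\Col]$ non-vanishing at those characters and completing the argument.
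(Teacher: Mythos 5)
Your argument follows exactly the same skeleton as the paper's: write $[\vp^{-n}\LT]_{\Bstd} = A_{\vp}^{-n}\Mbar\,[\Col]$, multiply on the right by $\adj[\Col]$, invoke Proposition~\ref{prop2} for divisibility of the left-hand side's second row, and conclude once you know $\Phi_{n-1,k-1}(\gamma_0)\nmid\det[\Col]$. You have correctly isolated that this last non-divisibility is the whole content of the step. The paper simply cites \cite[Proposition~4.11, Theorem~4.12, Corollary~4.15]{LLZ2} for it and stops; you instead flag it as ``the main obstacle'' and sketch two candidate routes without executing either, which as written is a genuine gap.

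Of your two suggestions, both are salvageable, but neither is closed. Route~(1), interpolating $\Col$ against Bloch--Kato (dual) exponentials to see $[\Col](\chi^{i}\omega)\in\mathrm{GL}_2$, is essentially the content of the cited results in \cite{LLZ2}, so it works but is exactly the step you would be importing. Route~(2) can in fact be completed without a fresh appeal to the exponential maps once you bring in Lemma~\ref{lem1.16}: combining $\det[\LT]_{\Bstd} = \det(\Mbar)\det[\Col]$ with Lemma~\ref{lemma2} and Lemma~\ref{lem1.16} gives $\det[\Col]\doteq\delta_{k-1}(\gamma_0-1) = \prod_{j=0}^{k-2}(u^{-j}\gamma_0-1)$ up to a unit in $\Lambda_E(\GG_1)$, a polynomial which evaluated at $\chi^{i}\omega$ is $\prod_{j}(u^{i-j}\zeta-1)$ with $\zeta\neq 1$ a $p$-power root of unity, hence nonzero since $u$ has infinite order. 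That closes the coprimality and makes the growth-dichotomy intuition precise. Your hedge that ``an order-of-vanishing count \ldots should force all zeros'' is not itself a proof, because $\det(\Mbar)$ and $\det[\Col]$ could a priori trade zeros while keeping their product fixed; the explicit identity from Lemma~\ref{lem1.16} is what rules this out.
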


\begin{proof}
We know that
$$\begin{bmatrix}\LT(z_1) & \LT(z_2)\end{bmatrix} = \begin{bmatrix} (1+\pi)\otimes w_{1} & (1+\pi)\otimes w_{2} \end{bmatrix} \Mbar \begin{pmatrix} \Col_{1}(z_{1}) & \Col_{1}(z_{2}) \\[6pt] \Col_{2}(z_{1}) & \Col_{2}(z_{2})\end{pmatrix}.$$
Let us denote $\begin{pmatrix} \Col_{1}(z_{1}) & \Col_{1}(z_{2}) \\[6pt] \Col_{2}(z_{1}) & \Col_{2}(z_{2})\end{pmatrix}$ by $[\mathrm{Col}]$.
By an abuse of notation, we write $\vp$ for $1\otimes \vp$. Here $1\otimes\vp$ mean $\vp$ does not act on $(1+\pi)$ and acts as $\vp$ on $w_i$. 

After applying $\vp^{-n}$ on both sides of the above equation, we obtain
\begin{align*}
    \vp^{-n}\left(\begin{bmatrix} \LT(z_1) & \LT(z_2)\end{bmatrix}\right) &=\begin{bmatrix}\vp^{-n}(\LT(z_1)) & \vp^{-n}(\LT(z_2))\end{bmatrix},\\
    &= \begin{bmatrix} (1+\pi)\otimes \vp^{-n}(w_{1}) & (1+\pi)\otimes \vp^{-n}(w_{2}) \end{bmatrix} \Mbar[\mathrm{Col}],\\ 
     &= \begin{bmatrix} (1+\pi)\otimes w_{1} & (1+\pi)\otimes w_{2} \end{bmatrix} A_{\vp}^{-n}\Mbar[\mathrm{Col}].
\end{align*}
Therefore, we get
\begin{equation*}
    [\vp^{-n}\LT]_{\Bstd}=A^{-n}_{\vp}\Mbar[\mathrm{Col}].
\end{equation*}
After rearranging the above equation, we get
\begin{equation}
    [\vp^{-n}\LT]_{\Bstd} \adj [\mathrm{Col}] = A^{-n}_{\vp}\Mbar\det([\mathrm{Col}]),
\end{equation}
From \cite[Proposition 4.11, Theorem 4.12, Corollary 4.15]{LLZ2}, we can conclude that $\Phi_{n-1,k-1}(\gamma_{0})$ does not divide $\det([\mathrm{Col}])$.
Thus, the result follows from Proposition \ref{prop2}.
\end{proof}

\begin{lem}
\label{lem1.16}
    The determinant of matrix $\Mbar$ is $\dfrac{\log_{p,k-1}(\gamma_{0})}{\delta_{k-1}(\gamma_{0}-1)}$ 
     upto a unit in $\Lambda_{E}(\GG_{1})$.
\end{lem}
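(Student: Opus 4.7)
The plan is to extract $\det(\Mbar)$ from the matrix identity $[\LT]_{\Bstd} = \Mbar \cdot [\mathrm{Col}]$ established just above Proposition \ref{prop4}. Taking $\det$ of both sides reduces the problem to
\begin{equation*}
  \det(\Mbar) \;=\; \frac{\det([\LT]_{\Bstd})}{\det([\mathrm{Col}])},
\end{equation*}
so I only need to identify the numerator and denominator modulo units in $\Lambda_E(\GG_1)$.

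For the numerator, I will combine the change-of-basis relation $[\LT]_{\Bstd} = Q\,[\LT]_{\Beigen}$ with Lemmas \ref{lemma2} and \ref{lem1.12} to get
$\det([\LT]_{\Bstd}) = \det(Q) \cdot \prod_{i=0}^{k-2} \ell_i = (\alpha - \beta) p^{k-1}\prod_{i=0}^{k-2}\ell_i$,
which is a nonzero constant in $E$ times $\prod \ell_i$ by Assumption 2.1 ($\alpha \neq \beta$). The elementary identity $\log_p(u^{-j}(1+X)) = \log_p(u)\cdot\ell_j$, expanded out, then upgrades this to a unit multiple of $\log_{p,k-1}(\gamma_0)$, since $\log_{p,k-1}(\gamma_0) = \log_p(u)^{k-1}\prod_{j=0}^{k-2}\ell_j$.

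For the denominator, the Coleman maps take values in $\Lambda_{\Oe}(\GG_1)$, so $\det([\mathrm{Col}])$ already lies in $\Lambda_E(\GG_1)$ and in particular is bounded. Invoking \cite[Proposition 4.11, Theorem 4.12, Corollary 4.15]{LLZ2} (the same package used in the proof of Lemma \ref{lem5}) in our rank-two crystalline setting with Hodge--Tate weights $0$ and $k-1$, I will identify $\det([\mathrm{Col}])$ with a unit multiple of $\delta_{k-1}(\gamma_0 - 1)$. Heuristically, this is consistent with the interpolation picture: the zeros of $\det([\mathrm{Col}])$ should occur precisely at the characters $\chi^j$ for $0 \leq j \leq k-2$, which are exactly the zeros of $\delta_{k-1}(\gamma_0 - 1)$, while the non-divisibility of $\det([\mathrm{Col}])$ by $\Phi_{n-1,k-1}(\gamma_0)$ (used already in Lemma \ref{lem5}) rules out any extra cyclotomic factors.

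Combining the two halves yields $\det(\Mbar) = \text{unit}\cdot \log_{p,k-1}(\gamma_0)/\delta_{k-1}(\gamma_0 - 1)$ inside $\HH_E(\GG_1)$, as claimed. The main obstacle is the denominator step: one must carefully verify that the determinant formula of LLZ2, which is phrased in their generality, descends to exactly $\delta_{k-1}(\gamma_0 - 1)$ up to a unit in $\Lambda_E(\GG_1)$ with respect to the normalization of the Wach-module basis $\{n'_1, n'_2\}$ fixed at the end of Section 3. Everything else is essentially a bookkeeping exercise in taking determinants and tracking the explicit change of basis matrix $Q$.
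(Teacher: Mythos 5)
Your approach is genuinely different from the paper's. The paper's proof of this lemma is a one-line citation to \cite[Corollary 3.2]{LLZ2} (with \cite[Lemma 2.7]{BL1} as a second reference), which computes $\det(\Mbar)$ directly from the structure of the Wach module (essentially by iterating the relation $P\vp(G_\gamma) = G_\gamma\gamma(P)$ and using $\det P = q^{k-1}$ up to a unit). You instead propose to extract $\det(\Mbar)$ from the identity $\det([\LT]_{\Bstd}) = \det(\Mbar)\det([\mathrm{Col}])$, which inverts the logical order the source literature uses: there, the formula for $\det(\Mbar)$ comes first, and the determinant of the Coleman map is a downstream consequence.

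Your numerator step is correct: $[\LT]_{\Bstd} = Q\,[\LT]_{\Beigen}$ and $\det([\LT]_{\Beigen}) = \prod_{i=0}^{k-2}\ell_i$ by Lemmas~\ref{lem1.12} and~\ref{lemma2}, $\det Q = (\alpha-\beta)p^{k-1}$ is a nonzero scalar (hence a unit) by Assumption~2.1, and $\prod_{j=0}^{k-2}\ell_j = \log_p(u)^{-(k-1)}\log_{p,k-1}(\gamma_0)$.

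The gap is in the denominator step. You assert that $\det([\mathrm{Col}])$ equals $\delta_{k-1}(\gamma_0-1)$ up to a unit in $\Lambda_E(\GG_1)$, invoking \cite[Prop.~4.11, Thm.~4.12, Cor.~4.15]{LLZ2}. But in this paper those references are used only to establish the much weaker statement that $\Phi_{n-1,k-1}(\gamma_0) \nmid \det([\mathrm{Col}])$ for all $n > 1$ (see the proof of Lemma~\ref{lem5}); they are not quoted as giving the characteristic ideal of the cokernel of the Coleman map. Your heuristic argument does not close this gap. Non-divisibility by the \emph{product} $\Phi_{n-1,k-1}(\gamma_0) = \prod_{j=0}^{k-2}\Phi_{n-1}(u^{-j}\gamma_0)$ does not preclude divisibility by an individual factor $\Phi_{n-1}(u^{-j_0}\gamma_0)$ for a fixed $j_0$, so the ``extra cyclotomic factors'' are not actually ruled out; and $\delta_{k-1}(\gamma_0-1)$ is not characterized merely by having zeros at the characters $\chi^j$, $0\le j\le k-2$. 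To make your route rigorous you would effectively have to re-derive the content of \cite[Cor.~3.2]{LLZ2} in the guise of a formula for $\det([\mathrm{Col}])$ (for instance, via the elementary divisors of the Wach-module matrix $P$), at which point the direct computation of $\det(\Mbar)$ is no longer any extra work. So the proposal is not circular in principle, but as written it has a genuine missing step that is the whole substance of the lemma.
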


\begin{proof}
    This is \cite[Corollary 3.2]{LLZ2}. See also \cite[Lemma 2.7]{BL1}.
\end{proof}

Thus, Proposition \ref{prop4}, Lemma \ref{lem5}, and Lemma \ref{lem1.16} imply that the matrix $\Mbar$ is a logarithmic matrix in the sense of Sprung and Lei--Loeffler--Zerbes.

\subsection{Factorization using $\Mbar$}
Let $F,G$ be power series in $\HH_{E}(\GG)$. 
We write $F\sim G$, if $F$ is $O(G)$ and $G$ is $O(F)$.
Recall that, for power series $F$and $G$, we say $F$ is $O(G)$ if $\lv F\lv_{\rho} = O(\lv G \lv_{\rho})$ as $\rho \to 1^{-}$.

\begin{lem}
\label{5.4}
    We have $\det(\Mbar) \sim \log_{p}^{k-1}$.
\end{lem}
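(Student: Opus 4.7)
The plan is to start from Lemma \ref{lem1.16}, which already identifies $\det(\Mbar)$ up to a unit in $\Lambda_E(\GG_1)$ with $\log_{p,k-1}(\gamma_0)/\delta_{k-1}(\gamma_0-1)$. Since multiplication by a unit in $\Lambda_E(\GG_1)$ preserves the growth class of any element of $\HH_E(\GG_1)$, it suffices to show that this explicit ratio is $\sim \log_p^{k-1}$.

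Unpacking the definitions from Section 2, the ratio factors as
\[
\frac{\log_{p,k-1}(\gamma_0)}{\delta_{k-1}(\gamma_0-1)} \;=\; \prod_{j=0}^{k-2}\frac{\log_p\!\big(u^{-j}(1+X)\big)}{u^{-j}(1+X)-1}.
\]
I would show each of the $k-1$ factors has growth rate exactly $1$, and then conclude that the product has growth rate exactly $k-1$. For $j\geq 1$, the denominator $u^{-j}(1+X)-1$ is a polynomial of degree $1$ in $X$ with nonzero constant term $u^{-j}-1$ (using that $u$ topologically generates $1+p\Zp$, so $u^j\neq 1$), hence a unit in $\Lambda_E(\GG_1)$; dividing by a unit in $\Lambda_E(\GG_1)$ preserves growth. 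The numerator equals $-j\log_p u + \log_p(1+X)$, a constant shift of $\log_p(1+X)$, so it has the same growth rate as $\log_p(1+X)$, namely $1$.

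The delicate case is $j=0$: the factor is $\log_p(1+X)/X$. Here the $j=0$ contribution of the denominator $\delta_{k-1}$ supplies exactly the simple pole that cancels the simple zero of $\log_p(1+X)$ at $X=0$. I would argue that after this cancellation the resulting element of $\HH_E(\GG_1)$ still has growth rate $1$, not $0$: explicitly, $\|\log_p(1+X)/X\|_{\rho_t}$ behaves like $p^t$ as $t\to\infty$ because the coefficient $1/(n+1)$ of $X^n$ attains $p$-adic size $p^s$ at $n+1=p^s$, and the maximum over $s$ of $\rho_t^{p^s-1}p^s$ grows like $p^t$. Equivalently, $\log_p(1+X)$ has growth rate exactly $1$ with a simple zero at $X=0$, and dividing a function of finite growth rate by $X$ (the order of its vanishing) preserves the growth rate, as can be seen via the Gauss norm bound $\|F/X^m\|_\rho\le\|F\|_\rho/\rho^m$ with $\rho^m\to 1$.

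Combining, the ratio is a product of $k-1$ factors each of growth rate $1$. The upper bound $O(\log_p^{k-1})$ is immediate from multiplicativity. For the lower bound, one uses that the zero sets of the various $\log_p(u^{-j}(1+X))$ are pairwise disjoint collections of $p$-power roots of unity, so the factors cannot conspire to reduce the growth of the product below $k-1$; this yields $\det(\Mbar)\sim\log_p^{k-1}$. The main obstacle in the plan is the growth-rate bookkeeping for $\log_p(1+X)/X$: one must verify that cancelling a simple zero against $X$ does not cost a unit of growth, which is the one point where the definition of $\HH_{E,r}(\GG_1)$ must be invoked directly rather than symbolically.
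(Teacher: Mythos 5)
Your approach is essentially the same as the paper's: both start from Lemma~\ref{lem1.16} and then analyze the growth of $\log_{p,k-1}(\gamma_0)/\delta_{k-1}(\gamma_0-1)$; you simply unpack the product into $k-1$ individual factors and do the growth bookkeeping more explicitly than the paper's terse remark that $\delta_{k-1}$ is a polynomial, hence $O(1)$.

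However, there is a genuine error in your treatment of the $j\geq 1$ factors. You assert that $u^{-j}(1+X)-1$ has nonzero constant term, ``hence a unit in $\Lambda_E(\GG_1)$.'' This implication fails. Writing $u^{-j}(1+X)-1 = u^{-j}X + (u^{-j}-1)$, the inverse power series is $(u^{-j}-1)^{-1}\sum_{n\geq 0}\bigl(-u^{-j}/(u^{-j}-1)\bigr)^n X^n$, and since $|u^{-j}-1|_p < 1$ while $|u^{-j}|_p = 1$, the coefficients grow $p$-adically unbounded. Thus $u^{-j}(1+X)-1$ is a unit in $E[[X]]$ (it does converge and is nonvanishing in a small neighborhood of $0$) but it is emphatically \emph{not} a unit in $\Lambda_E(\GG_1)$; indeed it vanishes at $X = u^j-1$, a point inside the open unit disk. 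So the statement ``dividing by a unit in $\Lambda_E(\GG_1)$ preserves growth'' is inapplicable here. The conclusion that each $j\geq 1$ factor has growth rate exactly $1$ is nevertheless correct, but the correct justification is the same mechanism you invoke for $j=0$: the numerator $\log_p(u^{-j}(1+X))$ vanishes to order one at $X=u^j-1$, so dividing by the degree-one polynomial is an exact division in $\HH_E(\GG_1)$; and exact division by a polynomial $P$ preserves the growth class because $\|P\|_{\rho_t}$ is bounded above and bounded away from $0$ as $t\to\infty$. Equivalently, one can reduce the $j\geq 1$ case to $j=0$ by the affine change of variable $Y = u^{-j}(1+X)-1$, which preserves $\|\cdot\|_{\rho_t}$ asymptotically since $|u^{-j}|_p=1$ and $|u^{-j}-1|_p<1$. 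With this repair your argument is sound and yields the same conclusion as the paper.
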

\begin{proof}
    From Lemma \ref{lem1.16}, we get $\det(\Mbar)=*\dfrac{\log_{p,k-1}(\gamma_0)}{\delta_{k-1}(\gamma_{0}-1)}$, where $*$ is a unit in $\Lambda_{E}(\GG)$. 
    Hence the result follows from the definition of $\log^{k-1}_{p}(\gamma_0)$ and the fact that $\delta_{k-1}(\gamma_{0}-1)$ is polynomial and hence $O(1)$.
\end{proof}

\begin{thm}
    \label{thm1}
   For $\lambda\in\{\alpha, \beta\}$, let $F_{\lambda} \in \HH_{E,v_{p}(\lambda)}(\GG)$,  
    such that for any integer $0\leq j \leq k-2$ and for any Dirichlet character $\omega$ of conductor $p^n$ we have $F_{\lambda}(\chi^{j}\omega)=\lambda^{-n}C_{\omega,j}$, where $C_{\omega,j}\in \overline{\Qp}$ that is independent of $\lambda$. 
    Then, there exist $F_{\flat}, F_{\sharp} \in \Lambda_{E}(\GG)$ such that
    \begin{equation}
        \begin{pmatrix} F_{\alpha} \\[6pt] F_{\beta} \end{pmatrix} = Q^{-1}\Mbar \begin{pmatrix} F_{\sharp} \\[6pt] F_{\flat} \end{pmatrix}.
    \end{equation}
    
\end{thm}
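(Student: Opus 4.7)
The plan is to define $F_\sharp, F_\flat$ by formally inverting the matrix identity and then to verify the resulting distributions lie in $\Lambda_E(\GG)$. Concretely, set
\begin{equation*}
\begin{pmatrix}F_\sharp \\ F_\flat\end{pmatrix} := (Q^{-1}\Mbar)^{-1}\begin{pmatrix}F_\alpha \\ F_\beta\end{pmatrix} = \frac{1}{\det\Mbar}\,\adj(\Mbar)\,Q\begin{pmatrix}F_\alpha \\ F_\beta\end{pmatrix},
\end{equation*}
a priori only an element of the fraction field of $\HH_E(\GG)$. The argument has two parts: first show $F_\sharp, F_\flat \in \HH_E(\GG)$ (holomorphic on the open disk), and then show $F_\sharp, F_\flat \in \Lambda_E(\GG) = \HH_{E,0}(\GG)$.

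For the pole cancellation, by Lemma \ref{lem1.16} the zeros of $\det\Mbar$ on the open unit disk are the finite-order characters $\chi^j\omega$ with $0 \le j \le k-2$ and $\omega$ of conductor $p^n$, $n \ge 1$ (the zeros of $\log_{p,k-1}(\gamma_0)$ at trivial $\omega$ are cancelled by the $\delta_{k-1}(\gamma_0-1)$ factor). At such a character, Lemma \ref{lem5} gives that the second row of $A_\vp^{-n}\Mbar$ vanishes, whence the image of $\Mbar(\chi^j\omega)$ is contained in the line $A_\vp^n\cdot\mathrm{span}(e_1)$. On the other hand, the diagonalization $A_\vp = Q\cdot\mathrm{diag}(\alpha^{-1},\beta^{-1})\cdot Q^{-1}$ together with the interpolation hypothesis $F_\lambda(\chi^j\omega)=\lambda^{-n}C_{\omega,j}$ gives
\begin{equation*}
Q\begin{pmatrix}F_\alpha\\F_\beta\end{pmatrix}(\chi^j\omega) = C_{\omega,j}(\alpha-\beta)\cdot A_\vp^n e_1,
\end{equation*}
which lies precisely in this line. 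Since $\adj(\Mbar)\cdot\Mbar=(\det\Mbar)I$, the adjugate annihilates the image of $\Mbar$ at any zero of $\det\Mbar$, so $\adj(\Mbar)\,Q(F_\alpha,F_\beta)^T$ vanishes at each such $\chi^j\omega$ and cancels the simple zero of $\det\Mbar$ there; hence $F_\sharp, F_\flat\in\HH_E(\GG)$.

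For boundedness, the most natural approach is via the $p$-adic regulator $\LT$: the growth and interpolation conditions imposed on $F_\alpha, F_\beta$ match exactly what is produced by $\LT$ in the eigenbasis $\Beigen$ (compare equation \eqref{29}, Proposition \ref{prop1}, and Lemma \ref{lem1.12}), so one constructs $z\in\HiwT$ with $[\LT(z)]_\Beigen=(F_\alpha,F_\beta)^T$. Then \eqref{ltbeigen} identifies $(F_\sharp,F_\flat)^T = \Col(z)$, which lies in $\Lambda_\Oe(\GG)^{\oplus 2}$ since the Coleman maps are $\Lambda_\Oe(\GG)$-valued. A more hands-on alternative uses Amice's theorem: once $F_\sharp, F_\flat \in \HH_E(\GG)$, it suffices to check $F_\sharp(\chi^i\eta),F_\flat(\chi^i\eta)$ are uniformly bounded over all finite-order characters $\chi^i\eta$, which at points away from the zero set of $\det\Mbar$ follows by direct evaluation using Proposition \ref{prop4}, and at points in the zero set follows from a Taylor-expansion refinement of the residue computation above. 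The main obstacle is this boundedness step: the naive growth estimate obtained by combining Proposition \ref{prop4} and Lemma \ref{5.4} alone only places $F_\sharp, F_\flat$ in some $\HH_{E,r}(\GG)$ with $r \ge 0$ not obviously zero, so to reach $\HH_{E,0}(\GG) = \Lambda_E(\GG)$ one must exploit the interpolation data in full, most cleanly through the regulator interpretation.
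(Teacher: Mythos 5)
Your holomorphicity argument is correct and, in fact, more conceptual than the paper's: the paper reaches the same conclusion by writing out $\adj(Q^{-1}\Mbar)$ explicitly via the factorization $Q^{-1}\Mbar = (Q^{-1}A_\vp^n Q)(Q^{-1}A_\vp^{-n}\Mbar)$ and extracting the $\alpha^{-n}$, $\beta^{-n}$ factors column by column, whereas you package the same cancellation cleanly through the rank-one image of $\Mbar(\chi^j\omega)$ and the identity $\adj(\Mbar)\Mbar = (\det\Mbar)I$. Both routes establish that $\det(Q^{-1}\Mbar)$ divides $P_4F_\alpha - P_2F_\beta$ and $-P_3F_\alpha + P_1F_\beta$ in $\HH_E(\GG)$.

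The boundedness step, however, has a genuine gap, and it comes from dismissing the argument that actually works. You write that the naive growth estimate ``only places $F_\sharp, F_\flat$ in some $\HH_{E,r}(\GG)$ with $r\ge 0$ not obviously zero,'' but this is not so. Once exact divisibility in $\HH_E(\GG)$ is established, one has numerators $P_4F_\alpha-P_2F_\beta$ and $-P_3F_\alpha+P_1F_\beta$ that are $O(\log_p^{k-1})$ by Proposition~\ref{prop4} together with the hypotheses on $F_\alpha,F_\beta$, and a denominator $\det(Q^{-1}\Mbar)\sim\log_p^{k-1}$ by Lemma~\ref{5.4} (i.e.\ two-sided growth bounds). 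The Gauss norms $\lv\cdot\lv_\rho$ are multiplicative, so when $H = G\cdot K$ in $\HH_E(\GG)$ with $H$ of growth $\le k-1$ and $G$ of growth exactly $k-1$ (in both directions), the factor $K$ has growth $\le 0$, i.e.\ $K$ is $O(1)$ and lies in $\Lambda_E(\GG)$. This is precisely the Amice--Velu/Vishik-style estimate and it is exactly how the paper finishes the proof; there is no leftover $r>0$.

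The alternative you propose in its place does not work. You cannot construct $z\in\HiwT$ with $[\LT(z)]_\Beigen = (F_\alpha, F_\beta)^T$ merely because $F_\alpha, F_\beta$ satisfy the same growth and interpolation conditions that $\LT$ outputs. The theorem is quantified over \emph{arbitrary} such pairs, while the image of $\LT$ is a rank-two $\Lambda_{\Oe}(\GG)$-module; a typical pair satisfying the hypotheses will not be in that image, and nothing in Proposition~\ref{prop1}, Lemma~\ref{lem1.12}, or equation~\eqref{29} gives the required surjectivity. The ``Amice's theorem'' alternative is only gestured at (uniform boundedness of values at finite-order characters is asserted, not shown; the behaviour at the zeros of $\det\Mbar$ is declared to follow from a Taylor expansion that is not carried out), so it cannot be accepted as a substitute either. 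The fix is simply to reinstate the growth argument you discarded.
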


\begin{proof}
    From Lemma \ref{lem5}, we know that the second row of $A_{\vp}^{-n}\Mbar$ is divisible by $\Phi_{n-1,k-1}(\gamma_{0})$. 
Hence we can write 
$$A_{\vp}^{-n}\Mbar = \begin{pmatrix} a & b \\[6pt] c\cdot\Phi_{n-1,k-1}(\gamma_{0}) & d\cdot\Phi_{n-1,k-1}(\gamma_{0})  \end{pmatrix},$$
where $a,b,c,d$ are power series.

Recall,
$Q^{-1}= \frac{1}{\det Q}\begin{pmatrix} vp^{k-1} & \beta \\[6pt] vp^{k-1} & \alpha\end{pmatrix},$ and
$\adj Q^{-1}\Mbar = \begin{pmatrix} P_4 & -P_2 \\[6pt] -P_3 & P_1\end{pmatrix}$. 
Note that $\det Q\neq 0$ since $\alpha\neq\beta$.

Thus for every positive integer $n$, we have
\begin{align*}
    Q^{-1}\Mbar &= Q^{-1}A^{n}_{\vp}QQ^{-1}A_{\vp}^{-n}\Mbar,\\ 
                &=\begin{pmatrix}  \frac{1}{\alpha^n} & 0 \\[12pt] 0 & \frac{1}{\beta^n} \end{pmatrix}  \dfrac{1}{\det Q}\begin{pmatrix} vp^{k-1} & \beta \\[12pt] vp^{k-1} & \alpha\end{pmatrix} \begin{pmatrix} a & b \\[12pt] c\cdot\Phi_{n-1,k-1}(\gamma_{0}) & d\cdot\Phi_{n-1,k-1}(\gamma_{0})  \end{pmatrix}  , \\[6pt]
                &= \frac{1}{\det Q}\begin{pmatrix}\frac{1}{\alpha^n} & 0 \\[12pt] 0 & \frac{1}{\beta^n}\end{pmatrix} \begin{pmatrix}a\cdot vp^{k-1}+\beta\cdot c\cdot\Phi_{n-1,k-1}(\gamma_{0}) & b\cdot vp^{k-1}+\beta\cdot d\cdot\Phi_{n-1,k-1}(\gamma_{0}) \\[12pt] a\cdot vp^{k-1}+\alpha\cdot c\cdot\Phi_{n-1,k-1}(\gamma_{0}) & b\cdot vp^{k-1}+\alpha\cdot d\cdot\Phi_{n-1,k-1}(\gamma_{0})\end{pmatrix},\\[6pt]
                &= \frac{1}{\det Q}\begin{pmatrix}\frac{1}{\alpha^{n}}(a\cdot vp^{k-1}+\beta\cdot c\cdot\Phi_{n-1,k-1}(\gamma_{0})) & \frac{1}{\alpha^{n}}(b\cdot vp^{k-1}+\beta\cdot d\cdot\Phi_{n-1,k-1}(\gamma_{0})) \\[12pt] \frac{1}{\beta^{n}}(a\cdot vp^{k-1}+\alpha\cdot c\cdot\Phi_{n-1,k-1}(\gamma_{0})) & \frac{1}{\beta^{n}}(b\cdot vp^{k-1}+\alpha\cdot d\cdot\Phi_{n-1,k-1}(\gamma_{0}))\end{pmatrix}.
\end{align*}

Hence
\begin{equation}
    \adj Q^{-1}\Mbar = \frac{1} {\det Q} \begin{pmatrix} \dfrac{1}{\beta^{n}}(b\cdot vp^{k-1}+\alpha\cdot d\cdot\Phi_{n-1,k-1}(\gamma_{0})) & \dfrac{-1}{\alpha^{n}}(b\cdot vp^{k-1}+\beta\cdot d\cdot\Phi_{n-1,k-1}(\gamma_{0})) \\[12pt] \dfrac{-1}{\beta^{n}}(a\cdot vp^{k-1}+\alpha\cdot c\cdot\Phi_{n-1,k-1}(\gamma_{0})) & \dfrac{1}{\alpha^{n}}(a\cdot vp^{k-1}+\beta\cdot c\cdot\Phi_{n-1,k-1}(\gamma_{0})) \end{pmatrix}. \label{35}
\end{equation}

For integer $0 \leq j \leq (k-2)$, and Dirichlet character $\omega$ of conductor $p^n$, equation \eqref{35} implies
\begin{equation*}
    (\adj Q^{-1}\Mbar)(\chi^{j}\omega)=\begin{pmatrix} \dfrac{1}{\beta^n}* & \dfrac{-1}{\alpha^n}* \\[12pt] \dfrac{-1}{\beta^n}*' & \dfrac{1}{\alpha^n}*'  \end{pmatrix},
\end{equation*}
where $*,*'$ are in $\overline{\Qp}$.

Thus, if we write $F_{1}= P_{4}F_{\alpha} - P_{2}F_{\beta},$ 
then, from \eqref{35}, we get
\begin{align*}
    F_{1}(\chi^{j}\omega) &= P_{4}(\chi^{j}\omega)F_{\alpha}(\chi^{j}\omega) - 
                            P_{2}(\chi^{j}\omega)F_{\beta}(\chi^{j}\omega),\\[6pt]
                          &= \frac{1}{\beta^n}\frac{* C_{\omega,j}}{\alpha^n} - \frac{1}{\alpha^n}\frac{* C_{\omega,j}}{\beta^n},\\[6pt]
                          &=0.
\end{align*}

Similarly, if we write $F_{2}= -P_{3}F_{\alpha} + P_{1}F_{\beta}$, then $F_{2}(\chi^{j}\omega)=0$.

Hence, for every positive integer $n$, the zeros of $\Phi_{n-1,k-1}$ are also zeros of $F_{1}$ and $F_{2}$. 
In other words, the roots of $\det ( Q^{-1}\Mbar)= (\text{some constant})\dfrac{\log_{p,k-1}(\gamma_0)}{\delta_{k-1}(\gamma_{0}-1)}$ are also the roots of $F_{1}$, and $F_{2}$.  
Therefore, $\det Q^{-1}\Mbar$ divides both $F_{1},F_{2}$ in $\HH_{E}(\GG)$.

Note that $P_{4}F_{\alpha}$ is $O(\log_{p}^{k-1})$, since $P_{4}$ is $O(\log_{p}^{v_{p}(\beta)})$ and $F_{\alpha}$ is $O(\log_{p}^{v_{p}(\alpha)})$. Similarly, $P_{2}F_{\beta}, P_{3}F_{\alpha},$ and $P_{1}F_{\beta} $ are $O(\log_{p}^{k-1})$. 

Let 
\begin{equation}
\begin{matrix}
   F_{\sharp} =\dfrac{P_{4}F_{\alpha} - P_{2}F_{\beta}}{\det Q^{-1}\Mbar} & \text{and} & F_{\flat} = \dfrac{-P_{3}F_{\alpha} + P_{1}F_{\beta}}{\det Q^{-1}\Mbar}.
   \end{matrix}\label{37}
\end{equation}
Then $F_{\sharp}$ and $F_{\flat}$ have bounded coefficients since the numerators of both of them are $O(\log_{p}^{k-1})$, denominators of both of them are $\det(Q^{-1}\Mbar)$ and by Lemma \ref{5.4} $\det(Q^{-1}\Mbar)\sim \log_{p}^{k-1}$. Hence $F_{\sharp}$ and $F_{\flat}$ are $O(1)$ (i.e. bounded). Therefore, we can conclude that $F_{\sharp}$ and $F_{\flat}$ are in $\Lambda_{E}(\GG)$. This completes the proof.
\end{proof} 

\section{Preliminaries about ray class groups, Hecke characters, and the two variable distribution algebras}
For the rest of the article, we fix a quadratic imaginary field $K$ and a prime $p\geq 3$ which splits in $K$ as $p\mathcal{O}_{K}=\PP\PPP$.
Let $h_{K}\geq 1$ be the class number of $K$.

\begin{assum}
    For the rest of the article, we assume that $p\not\mid h_{K}$.
\end{assum} 

\subsection{Ray class groups and ray class fields}
Let $K_{\infty}$ be the unique $\mathbb{Z}^{2}_{p}$ extension 
of $K$.
If $\mathcal{I}$ is an ideal of $K$, we write $G_{\mathcal{I}}$ for the ray class group $K$ modulo $\mathcal{I}$.
We define
\begin{equation*}
    \text{Cl}(K,p^{\infty})=G_{p^{\infty}} = \varprojlim_{n} G_{(p)^n}, \hspace{0.05in} G_{\mathfrak{p}^{\infty}} = \varprojlim_{n} G_{\mathfrak{p}^n}, \hspace{0.05in} G_{\overline{\mathfrak{p}}^{\infty}} = \varprojlim_{n} G_{\overline{\mathfrak{p}}^n}.
\end{equation*}
These are the Galois groups of the ray class fields $K(p^{\infty}), K(\mathfrak{p}^{\infty})$ and $K(\overline{\mathfrak{p}}^\infty)$ respectively.
For $\qqq\in\pset$, let $H(\qqq^{\infty})$ be the subfield of $G_{\qqq^{\infty}}$ such that $\Gal(H(\qqq^{\infty})/K)\cong \Zp$. 
Note $H(\qq^{\infty})\subset K_{\infty}$.

\begin{rem}
    We have an isomorphism $G_{p^{\infty}}\cong \Delta_{K} \times \Zp\times\Zp \cong \Delta \times \overline{\langle \gamma_{\PP}\rangle}\times \overline{\langle \gamma_{\PPP}\rangle}$, where $\Delta_{K}$ is a finite abelian group, $\gamma_{\PP}$ and $\gamma_{\PPP}$ topologically generate $\Zp$ parts of $G_{\PP^{\infty}}$ and $G_{\PPP^{\infty}}$ respectively. 
    Here by $\overline{\langle x \rangle}$ we mean the topological closure of the cyclic group generated by $x$.
\end{rem}

\begin{rem}
\label{rem5.1}
 By the assumption $p\not\mid h_{K}$, there exists a unique prime in $K_{\infty}$ above $\PP$ and a unique prime above $\PPP$. By an abuse of notation, we will also denote by $\PP$ and $\PPP$ by the unique prime above $\PP$ and $\PPP$ respectively in $K_{\infty}$. 
 Therefore, for $\qq\in\pset$, $\Gal(H(\qqq^{\infty})_{\qqq}/K_{\qqq})=\overline{\langle \gamma_{\qqq} \rangle}\cong \Gal(H(\qqq^{\infty})/K) \cong \Zp$.
\end{rem}

Since $p$ splits in $K$, the local field $K_{\qq}$ is isomorphic to $\Qp$, for $\qq\in\pset$. 
Thus,$1+\pi_{\qqq}\OO_{K_{\qqq}}\cong 1 + p\Zp$, where $\pi_{\qq}$ is a uniformizer of $\OO_{K_{\qqq}}$.
Recall the topological generator $u\in 1 +p\Zp$ such that $\chi(\gamma_{0})=u$, where $\gamma_0$ generates $\Gamma_{1}\cong \Zp$, and $\chi$ is $p$-adic cyclotomic character. 
Thus, we may set $u_{\PP}=u_{\PPP}=u$, where $u_{\qqq}$ is a topological generator of $1+\pi_{\qqq}\OO_{K_{\qqq}}$. 
From now on, we fix this $u$.


By local class field theory, there exists a group isomorphism (\emph{Artin map}) \[\Art_{\qqq}:\OO_{K_{\qqq}}^{\times}\to\Gal(H(\qqq^{\infty})_{\qqq}/K_{\qqq})\cong\Gal(H(\qqq^{\infty})/K),\]
such that
$$\Art_{\qqq}(u_{\qqq})=\Art_{\qqq}(u)=\gamma_{\qqq},$$
where $\gamma_{\qqq}$ is a topological generator of $\Gal(H(\qqq^{\infty})_{\qqq}/K_{\qqq})$.
By an abuse of notations, let $\gamma_{\qqq}$ be a topological generator of $\Gal(H(\qqq^{\infty})/K)$ and $\Art_{\qqq}(u)=\gamma_{\qqq}$.

\subsection{Hecke characters as the characters on the ray class groups}
Let $\ideles$ denote the group of ideles of $K$ and write $\ideles=\mathbb{A}_{\infty}^{\times}\times \mathbb{A}_{f}^{\times}$, where $\mathbb{A}_{\infty}^{\times}$ is the infinite part and $\mathbb{A}_{f}^{\times}$ is the finite part.
We can embed $K^{\times}$ into $\ideles$ diagonally.
Fix embeddings $i_{\infty}:\overline{\QQ} \hookrightarrow \CC$ and $i_{p}: \overline{\QQ}\hookrightarrow\CC_{p}$.
\begin{defi}[Hecke characters]
\hfill
\begin{enumerate}
        \item A \emph{Hecke character} $\Xi$ of $K$ is a continuous homomorphism $\Xi:\ideles \to \CC^{\times}$ that is trivial on $K^{\times}$. In other words, a \emph{Hecke character} of $K$ is a continuous homomorphism $\Xi: K^{\times}\backslash\ideles \to  \CC^{\times}$.
        \item We say a Hecke character $\Xi$ is \emph{algebraic} if for each embedding $\kappa: K \hookrightarrow\CC$, there exists $n_{\kappa}\in \ZZ$ such that $\Xi(x)=\prod_{\kappa}(\kappa(x))^{-n_{\kappa}}$ for each $x$ in the connected component of the identity in $K_{\infty}^{\times}$. 
        \item Let $\Xi:K^{\times}\backslash\ideles \to  \CC^{\times}$ be an algebraic Hecke character of $K$. We say that $\Xi$ has \emph{infinity type} $(q,r)\in \ZZ^{2}$ if $\Xi_{\infty}(z)=z^{q}\overline{z}^{r}$, where for each place $v$ of $K$, we let $\Xi_{v}: K_{v}^{\times} \to \CC^{\times}$ be the $v$-component of $\Xi$.
\item The conductor of a Hecke character $\Xi$ is an ideal $\mathfrak{f}\coloneqq \prod_{\PP}\PP^{n_{\PP}}$, where the product runs over all primes of $K$, such that
 \begin{itemize}\item $n_{\PP}=0$ for almost all primes $\PP$,
\item for finitely many primes $\PP$, $n_{\PP}\in\ZZ_{>0}$and $\Xi_{\PP}(1+\PP^{n_{\PP}})=1$. Moreover, $n_{\PP}$ is minimal with this property. \end{itemize}
\end{enumerate}        
\end{defi}

From now on, all the Hecke characters mentioned in this article are algebraic Hecke characters. 
We can view Hecke characters as $p$-adic characters:
\begin{defi}[$p$-adic avatar of an algebraic Hecke character]
    Let $\Xi$ be a Hecke character of $K$ of conductor $\PP^{n_{\PP}}\PPP^{n_{\PPP}}$ and infinity type $(a,b)$.
    The $p$-adic avatar of $\Xi$ is defined as
        $$\widehat{\Xi}(x)\coloneqq x_{\PP}^{a}x_{\PPP}^{b}\cdot i_{p}i^{-1}_{\infty}(\Xi(x_{fin})),$$
where $x\in\mathbb{A}_{K}^{\times}$, and for $\qq\in\pset$, $x_\qq$ are the $\qq$ componenets of $x$. See also \cite[Section 7.3]{Will}.
\end{defi}


By the class field theory, the correspondence $\Xi \mapsto \widehat{\Xi}$ establishes a bijection between the set of algebraic Hecke characters of $K$ of conductor dividing $(p^\infty)$ and the set of locally algebraic $\overline{\Qp}$-valued characters of $G_{p^{\infty}}$. See \cite[Theorem 7.3]{Will}.

Now we combine this $p$-adic avatar of the Hecke character and the global Artin reciprocity map $\rec$ to define a Galois character on the ray class group $G_{p^{\infty}}$.

Let $\Xi$ be a Hecke character of $K$ of conductor $\PP^{n_{\PP}}\PPP^{n_{\PPP}}$ and infinity type $(a,b)$.
\begin{defi}
    The $p$-adic Galois character of $\Xi$ is $\widetilde{\Xi}: G^{ab}_{K} \to \Cp^{\times}$ given by 
        $$\widetilde{\Xi}= \widehat{\Xi}\circ\rec^{-1},$$
        where $\rec:\mathbb{A}^{\times}_{K}/K^\times \to G^{\mathrm{ab}}_{K}$ is the global Artin reciprocity. 
\end{defi}
Note that $G_{p^{\infty}}\subset G^{ab}_{K}$ and Remark \ref{rem5.1} implies  $\rec^{-1}(\gamma_{\qq})=\Art_{\qq}^{-1}(\gamma_{\qq})$.

By an abuse of notation, let $\tilX$ denote $\tilX\mid_{G_{p^{\infty}}}$.
Define $\tilX_{\qqq} \coloneqq \tilX|_{\overline{\langle \gamma_{\qqq} \rangle}}$.
Hence, if $\Xi$ is a Hecke character of the infinity type $(a,b)$ and conductor $\PP^{n_{\PP}}\PPP^{n_{\PPP}}$, then 
$$\tilX_{\PP}(\gamma_{\PP})=u^{a}\zeta_{\PP},$$ 
and 
$$\tilX_{\PPP}(\gamma_{\PPP})=u^{b}\zeta_{\PPP},$$ 
where $\zeta_{\qqq}$ is a $(p^{n_{\qqq}-1})$-th root of unity. 
In other words, for any $\sigma\in \Delta_{K},$
\begin{equation}
    \tilX(\sigma\times\gamma_{\PP}\times\gamma_{\PPP})=\tilX_{\PP}(\gamma_{\PP})\cdot\tilX_{\PPP}(\gamma_{\PPP})\times\text{some root of unity}.
\end{equation}
From the above discussion, we get
$$\widecheck{\Xi}\coloneqq\tilX |_{\overline{\langle \gamma_{\PP}\rangle}\times\overline{\langle \gamma_{\PPP}\rangle}}=\tilX_{\PP}\cdot\tilX_{\PPP}.$$

Moreover, for any Hecke character $\omega_{1}$ of infinity type $(a,0)$ and conductor $\PP^{n_{\PP}}$ and Hecke character $\omega_{2}$ of infinity type $(0,b),$ and conductor $\PPP^{n_{\PPP}}$, then the product $\widetilde{\omega_{1}}_{\PP}\cdot\widetilde{\omega_{2}}_{\PPP}$ is a character on $\overline{\langle \gamma_{\PP} \rangle} \times \overline{\langle \gamma_{\PPP} \rangle}$. 

\subsection{Two variable distribution modules}
\label{twovar}
We will extend $\HH_{E}(\GG_1)$ from previous sections. 
Let 
$$\HH_{\Cp,r}(\GG_1)=\{f(\gamma_{0}-1)\colon f(X)=\sum_{n\geq 0} a_{n}X^{n}\in \Cp[[X]], \sup_{n}(n^{-r}|a_{n}|_{p}) < \infty\}.$$
and
$$\HH_{\Cp}(\GG_1) = \bigcup_{r\geq 0}\HH_{\Cp,r}(\GG_1).$$
Note that $\HH_{E}(\GG_1)= E[[\gamma_{0}-1]] \bigcap \HH_{\Cp}(\GG_1)$, for any finite field extension $E$ of $\Qp$.

For $\qq\in\pset$, define a map $\tau_{\qqq}=\Art_{\qqq}\circ\chi$  between $\GG_1$ and $\GG_{\qqq}\coloneqq \Gal(H(\qqq^{\infty})/K)$ which sends $\gamma_{0}$ to $\gamma_{\qqq}$. 
This \emph{change of variable map} can be extended to ring isomorphisms
\begin{align*}
    \tau_{\qqq}\colon\HH_{\Cp}(\GG_1) &\to \HH_{\Cp}(\GG_{\qqq}),\\
                f(\gamma_{0}-1) &\mapsto f(\gamma_{\qqq}-1).
\end{align*}
Similarly, for any finite field extension $E$ of $\Qp$, we again have an isomorphism
$$\tau_{\qqq}: \HH_{E}(\GG_1) \to \HH_{E}(\GG_{\qqq}).$$

For any finite extension $E$ of $\Qp$, let $$\Lambda_{E}(G_{p^{\infty}}) = E[\Delta_{K}]\otimes_{\OO_E} \Lambda_{\Oe}(\GG_{\PP})\widehat{\otimes}_{\OO_E}\Lambda_{\Oe}(\GG_{\PPP}),$$
where
$\Lambda_{\Oe}(\GG_{\qqq})$ is the Iwasawa algebra $\OO_{E}[[\GG_{\qqq}]]$ for $\qqq\in\pset$. 
The two-variable Iwasawa algebra $\Lambda_{E}(G_{p^{\infty}})$ is isomorphic to the power series ring  
\[E[\Delta_{K}]\otimes_{\Oe} \Oe[[T_{1}, T_{2}]],\]
by identifying $\gamma_{\PP}-1$ with $T_{1}$ and $\gamma_{\PPP}-1$ with $T_{2}$.

We define the 
$$\HH_{\Cp,r,s}\coloneqq \HH_{\Cp,r}(\GG_{\PP})\widehat{\otimes}_{\CC_p}\HH_{\Cp,s}(\GG_{\PPP}),$$
and
$$\HH_{E,r,s}\coloneqq \HH_{\Cp,r,s}\cap E[[\gamma_{\PP}-1, \gamma_{\PPP}-1]],$$
for any finite extension $E$ of $\Qp$.
Thus
\[\HH_{E,r,s}= \HH_{E,r}(\GG_{\PP})\widehat{\otimes}_{E}\HH_{E,s}(\GG_{\PPP}).\]
We also define
$$\HH_{\Cp,r,s}(G_{p^{\infty}})\coloneqq \Cp[\Delta_{K}]\otimes_{\CC_p} \HH_{\Cp,r,s},$$
and
$$\HH_{E,r,s}(G_{p^{\infty}})\coloneqq E[\Delta_{K}]\otimes_{E} \HH_{E,r,s},$$
where $\Delta_K$ is the finite abelian group appearing in the Galois group $G_{p^{\infty}}$.

Note that
$$\Lambda_{E}(G_{p^{\infty}}) = \HH_{E,0,0}(G_{p^{\infty}}),$$
since we can identify $\HH_{E,0}(\GG_\qq)$ with $\Lambda_{E}(\GG_\qq)$ for $\qq\in\pset$.
Moreover, $\HH_{E,r,s}$ is a $\Lambda_{E}(\GG_{\PP})\widehat{\otimes}_{E}\Lambda_{E}(\GG_{\PPP})$-module, and $\HH_{E,r,s}(G_{p^{\infty}})$ is a $\Lambda_{E}(G_{p^\infty})$-module.


For $F \in \HH_{E,r,s}$, and a Hecke character $\Xi$ of the infinity type $(a,b)$ and conductor $\PP^{n_{\PP}}\PPP^{n_{\PPP}}$, define 
\begin{align}
    F^{(\tilX_{\PP})} &\coloneqq F(u^{a}\zeta_{\PP}-1, \gamma_{\PPP}-1),\label{ps1}\\
    F^{(\tilX_{\PPP})} &\coloneqq F(\gamma_{\PP}-1, u^{b}\zeta_{\PPP}-1),\label{ps2}
\end{align}
where $\zeta_{\qqq}$ is a primitive $p^{(n_{\qq}-1)}$-th root of unity for $\qqq\in\pset$.

\begin{lem}
\label{lem3.1}
    Let $E$ be a finite extension of $\Qp$, $F\in\HH_{E,r,s}$ be a power series and let $\Xi$ be a Hecke character of $K$ of infinity type $(a,b)$ and conductor $\PP^{n_{\PP}}.\PPP^{n_{\PPP}}$. 
    Then, $F^{(\tilX_{\PP})} \in \HH_{E(\tilX_{\PP}(\gamma_{\PP})),s}(\GG_{\PPP})$ and $F^{(\tilX_{\PPP})} \in \HH_{E(\tilX_{\PPP}(\gamma_{\PPP})),r}(\GG_{\PP})$, where $E(\tilX_{\qqq}(\gamma_{\qqq}))$ is finite extension of E by adjoining values of $\tilX_{\qqq}(\gamma_{\qqq})$ for $\qqq\in\pset$. 
    
\end{lem}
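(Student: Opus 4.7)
The plan is to expand $F$ as a two-variable power series in $\gamma_{\PP}-1$ and $\gamma_{\PPP}-1$, substitute the image of the first variable under $\tilX_{\PP}$, and verify that the resulting one-variable series satisfies the growth condition defining $\HH_{E(\tilX_{\PP}(\gamma_{\PP})),s}(\GG_{\PPP})$. The argument for $F^{(\tilX_{\PPP})}$ is completely symmetric, so I will only describe the first case.

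First I would observe that since $u\in 1+p\Zp$ and $\zeta_{\PP}$ is a $p^{n_{\PP}-1}$-th root of unity, the element $u^{a}\zeta_{\PP}-1$ lies in the open unit $p$-adic disk; more precisely, since $p\geq 3$, the ultrametric inequality gives $|u^{a}\zeta_{\PP}-1|_{p}=\max(|\zeta_{\PP}-1|_{p},|1-u^{-a}|_{p})\leq \rho_{t_{0}}$ for the explicit integer $t_{0}=\max(1,n_{\PP}-1)$. Writing
$$F=\sum_{m,n\geq 0} c_{m,n}\,(\gamma_{\PP}-1)^{m}\,(\gamma_{\PPP}-1)^{n},\qquad c_{m,n}\in E,$$
the membership $F\in\HH_{\Cp,r,s}$ translates, via the two-variable Amice description of the distribution algebra, into the existence of a constant $C>0$ such that
$$\sup_{m,n}|c_{m,n}|_{p}\,\rho_{t}^{m}\,\rho_{t'}^{n}\leq C\,p^{tr+t's}\qquad\text{for every }t,t'\geq 1.$$

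Substituting $\gamma_{\PP}-1\mapsto u^{a}\zeta_{\PP}-1$ and collecting terms, I would obtain
$$F^{(\tilX_{\PP})}=\sum_{n\geq 0} b_{n}\,(\gamma_{\PPP}-1)^{n},\qquad b_{n}=\sum_{m\geq 0} c_{m,n}\,(u^{a}\zeta_{\PP}-1)^{m}.$$
By choice of $t_{0}$, each inner sum converges and satisfies $|b_{n}|_{p}\leq\sup_{m}|c_{m,n}|_{p}\,\rho_{t_{0}}^{m}$. Applying the growth estimate above with $t=t_{0}$ yields, for every $t'\geq 1$,
$$|b_{n}|_{p}\,\rho_{t'}^{n}\leq C\,p^{t_{0}r+t's},$$
so $\sup_{t'}\bigl(p^{-t's}\sup_{n}|b_{n}|_{p}\,\rho_{t'}^{n}\bigr)\leq C\,p^{t_{0}r}<\infty$, which is precisely the growth condition placing $F^{(\tilX_{\PP})}$ in $\HH_{\Cp,s}(\GG_{\PPP})$. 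Finally, since $c_{m,n}\in E$, $u^{a}\in E$ and $\zeta_{\PP}\in E(\tilX_{\PP}(\gamma_{\PP}))$, each $b_{n}$ lies in $E(\tilX_{\PP}(\gamma_{\PP}))$, and intersecting with $E(\tilX_{\PP}(\gamma_{\PP}))[[\gamma_{\PPP}-1]]$ yields $F^{(\tilX_{\PP})}\in\HH_{E(\tilX_{\PP}(\gamma_{\PP})),s}(\GG_{\PPP})$.

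The main obstacle, and the only place I would pause, is the very first bookkeeping step: making precise the dictionary between the completed tensor product $\HH_{\Cp,r}(\GG_{\PP})\widehat{\otimes}\HH_{\Cp,s}(\GG_{\PPP})$ and the rectangular coefficient growth condition displayed above. Once that identification is in hand, the rest is a single application of the growth inequality at the well-chosen radius $\rho_{t_{0}}$, with no further analytic input.
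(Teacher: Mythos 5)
Your proof is correct and follows the same substitution-and-growth-estimate strategy that the paper's (very terse) proof uses, just written out in full: expand $F$ in the two variables, observe that $u^{a}\zeta_{\PP}-1$ lies at radius $\rho_{t_{0}}$, and read off the growth in the remaining variable from the rectangular estimate at $t=t_{0}$. The one point you flag, namely the identification of $\HH_{\Cp,r}(\GG_{\PP})\widehat{\otimes}\HH_{\Cp,s}(\GG_{\PPP})$ with the space of two-variable power series satisfying $\sup_{m,n}|c_{m,n}|_{p}\rho_{t}^{m}\rho_{t'}^{n}\leq Cp^{tr+t's}$ uniformly in $t,t'$, is the standard description of two-variable $p$-adic distribution algebras (this is exactly the content of the paper's Remark \ref{rem7.7} and the reference to \cite[Section 2.1]{Lei3}), so there is no genuine gap.
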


\begin{proof}
    The power series $F$ belongs to the power series ring $\HH_{E,r,s}$ which is completed tensor product of $\HH_{E,r}(\GG_{\PP})$ and $\HH_{E,s}(\GG_{\PPP})$.
    Thus, if we substitute $\gamma_{\PP}$ by $u^{a}\zeta_{\PP}$, then $F^{(\tilX_{\PP})}$ will be a power series with growth $s$ and the coefficients of $F^{(\tilX_{\PP})}$ will be in $E(\tilX_{\PP}(\gamma_{\PP}))$. 

    Similarly, substituting $\gamma_{\PPP}$ by $u^{b}\zeta_{\PPP}$, then $F^{(\tilX_{\PPP})}\in\HH_{E(\tilX_{\PPP}(\PPP)),s}(\GG_{\PP})$. 
\end{proof}

\textit{Isotypical components.} Let $\eta:\Delta_{K} \to \Zp^\times$ (or $\eta:\Delta_{K} \to \overline{\Qp}^\times$) be a character, where $\Delta_{K}$ is a finite abelian subgroup appearing in $G_{p^\infty}$. 
Write $e_{\eta}= \frac{1}{|\Delta_{K}|}\sum_{\sigma\in\Delta}\eta^{-1}(\sigma)\sigma$. 
For $*\in\{\Cp,E\}$, if $F\in \HH_{*,r,s}(G_{p^{\infty}})$, write $F^{\eta}=e_{\eta}(F)$ for its image in $e_{\eta}(\HH_{*,r,s}(G_{p^\infty}))\cong\HH_{*,r,s}$. 
Note that this isomorphism is a module isomorphism. 
If $\eta=1$, we simply write $F^{\Delta_{K}}$ instead of $F^{\eta}$. Note that $F^{\Delta_{K}}\in \HH_{*,r,s}$.

After this point, our integer $k$ will be greater than or equal to $0$ rather than $2$, since we will be working with Bianchi modular forms which are \emph{cohomological} in nature.

\section{Cuspidal Bianchi modular forms and their $p$-adic $L$-functions}
In this section, we will briefly recall the definition of Bianchi modular forms, their $L$-functions, and the $p$-adic $L$-functions constructed by Williams in \cite{Will}.

\subsection{Bianchi modular forms}
We define Bianchi modular forms adelically. Fix a quadratic imaginary field $K$. Let $\sigma$ denote an embedding $K\hookrightarrow\CC$, and let $c$ denote the complex conjugate, i.e., $c=\overline{\sigma}$.

\begin{defi}[Level]
Let $\widehat{\OO_{K}}\coloneqq \OO_{K}\otimes\widehat{\ZZ}$.
For any ideal $\frkm\in\OO_{K}$, define
\begin{enumerate}
\item   $ \Omega_{0}(\frkm) \coloneqq \left\{ \begin{pmatrix} a & b \\ c & d \end{pmatrix} \in \mathrm{GL}_{2}(\widehat{\mathcal{O}_{K}}) \colon c\in \frkm\widehat{\OO_K} \right\},$

\item $ \Omega_{1}(\frkm) \coloneqq \left\{ \begin{pmatrix} a & b \\ c & d \end{pmatrix} \in \mathrm{GL}_{2}(\widehat{\mathcal{O}_{K}}) \colon c\in \frkm\widehat{\OO_K}, a, d\in 1+\frkm\widehat{\OO_{K}} \right\}.$

\end{enumerate}
\end{defi}
Fix $k,\ell\in\ZZ_{\geq 0}$.
For any ring $R$, $V_{k+\ell+2}(R)$ denotes the space of homogeneous
polynomials over $R$ in two variables of degree $k+\ell+2$.

Let $\varepsilon$ be a Hecke character of infinity type $(-k,-\ell)$ and conductor dividing the level $\frkm$. 
For $x_{f}=\begin{pmatrix}
    a & b \\
    c& d
\end{pmatrix} \in \Omega_{0}(\frkm)$, set $\varepsilon_{\frkm}(x_{f})=\varepsilon_{\frkm}(d)=\prod_{\qq \mid \frkm}\varepsilon_{\qq}(d_{\qq})$.

\begin{defi}[Bianchi modular forms]
    We say a function $$\mathcal{F} : \mathrm{GL}_{2}(\mathbb{A}_{K}) \to V_{k+\ell+2}(\CC )$$ is a cuspform of weight $(k,\ell)$, level $\frkm$ (i.e. level $\Omega_{1}(\frkm)$), and central action of $\varepsilon$ if it satisfies
    \begin{enumerate}
        \item $\FF(zg)=\varepsilon(z)\FF(g)$ for $z\in\mathbb{A}^{\times}_{K}\cong Z(\mathrm{GL}_{2}(\mathbb{A}^{\times}_{K}))$;
        \item For all $u=u_{f}\cdot u_{\infty}\in \Omega_{0}(\frkm) \mathrm{SU}_{2}(\CC)$, $$\FF(gu)=\varepsilon_{\frkm}(u_{f})\FF(g)\rho_{k+\ell+2}(u_\infty),$$ 
        where $\rho: \mathrm{SU}_{2}(\CC) \to \mathrm{GL}(V_{k+\ell+2}(\CC))$ is an irreducible right representation;
        \item The function $\FF$ is left-invariant under the group $\mathrm{GL}_{2}(K)$.
        \item The function $\FF$ is an eigenfunction of Casimir operators $\partial_{\mathrm{id}}$ and $\partial_{c}$, with eigenvalues $k^{2}/2 + k$ and $\ell^{2}/2+\ell$ respectively. Here $\partial_{\mathrm{id}}/4, \partial_{c}/4$ are components of the Casimir operator in the Lie algebra $\mathfrak{sl}_{2}(\CC)\otimes_{\mathbb{R}}\CC$. 
       \item We say $\FF$ is a \emph{cuspidal} Bianchi modular form if $\FF$ satisfies the cuspidal condition for all $g\in \mathrm{GL}_{2}(\mathbb{A}_{K})$, that is, we have
        \begin{equation*}
            \int_{K\backslash\mathbb{A}_{K}} \FF(ug)\,du = 0,
        \end{equation*}
where we consider $\mathbb{A}_{K}$ inside $\gl(\mathbb{A}_{K})$, by the map sending $u\to\begin{pmatrix}1 & u \\ 0 & 1\end{pmatrix}$, and $\,du$ is the Lebesgue measure on $\mathbb{A}_K$.
    \end{enumerate}
\end{defi}
See \cite[Definition 1.2]{Will} and \cite[Definition 2.1]{LPS2} for the details about the definition of Bianchi modular forms.
\begin{rems}
\hfill
    \begin{enumerate}
        \item In a result by Harder, he showed that if $\FF$ is a non-zero cuspidal Bianchi modular form of weight $(k,\ell)$, then $k=\ell$. See \cite{Hard}.
        \item A Bianchi modular form $\FF$ descends to give a collection of $h$ automorphic forms $\FF^{i}: \mathbb{H}_{3} \to V_{k+\ell+2}(\CC)$, where $h$ is the class number of $K$, and $\mathbb{H}_3 \coloneqq  \RR_{\geq 0}\times \CC \cong \gl(\CC)/[\mathrm{SU}_{2}(\CC)Z(\mathrm{GL}_{2}(\CC))]$ is the hyperbolic $3$-space.
    \end{enumerate}
\end{rems}
From now onwards, we will deal with cuspidal Bianchi modular forms, and hence we will fix $k=\ell$ in $\ZZ_{\geq 0}$.
We will decompose $p$-adic $L$-functions associated to non-parallel weight Bianchi modular forms (of level $\Omega_{0}(\frkm)$) in Appendix \ref{appA}.

For a non-trivial cuspform $\FF$ of level $\frkm$ and weight $(k,k)$, we have the following Fourier expansion:
\begin{equation}
\label{fourier}
\FF\left[\begin{pmatrix}
    t & z \\ 0 & 1
\end{pmatrix}\right]= |t|_{K}\sum_{a\in K^{\times}} c(at\mathcal{D}_{K},\FF)W(at_{\infty})\mathrm{e}_{K}(az),
\end{equation}
where $\mathcal{D}_{K}$ is a generator of the different of $K$, $\mathrm{e}_{K}$ is an additive character of $K\backslash\mathbb{A}_K$, and $W$ is the Whittaker function. 
Note that the Fourier coefficients $c(\cdot, \FF)$ are functions on the fractional ideals of $K$, and $c(I,\FF)=0$ if $I$ is not an integral ideal. 
See \cite[Section 1.2]{Will}, \cite[Section 2.3]{LPS2} for the details.
For the Whittaker function, see \cite[Section 2.3]{LPS}.

\begin{defi}[Twisted $L$-function of a Bianchi modular form]
Let $\FF$ be a cuspidal Bianchi modular form of any weight and level $\frkm$. Let $\Xi$ be a Hecke character of conductor $\mathfrak{f}$. The twisted $L$-function of $\FF$ is defined as
\begin{equation*}
     L(\FF, \Xi, s) = \sum_{\substack{0\neq\mathfrak{a}\subset \OO_{K},\\(\mathfrak{f}, \mathfrak{a})=1}} c(\mathfrak{a},\FF)\Xi(\mathfrak{a})\mathrm{N}(\mathfrak{a})^{-s},
\end{equation*}
    where $c(\mathfrak{a},\FF)$ is the $\mathfrak{a}$-th Fourier coefficient of $\FF$ and $s\in\CC$ in some suitable right-half plane. 
\end{defi}
We also consider the completed $L$-function
$$\Lambda(\FF,\Xi, s)= \dfrac{\Gamma(q+s)\Gamma(r+s)}{(2\pi i)^{q+s}(2\pi i)^{r+s}} L(\FF,\Xi,s),$$
where $(q,r)$ is the infinity type of $\Xi$, and $\GG(-)$ are the Deligne's $\GG$-factors.

\subsection{$p$-adic $L$-function of  Bianchi modular forms}
Let $K/\QQ$ be a quadratic imaginary field with class number $h_K$, discriminant $-D$.
Let $p$ be an odd prime splitting in $K$ as $(p)=\PP\PPP$.

\begin{thm}[Williams, {{\cite[Theorem 7.4]{Will}}}]
\label{thmwilm}
    Let $\mathcal{F}$ be a cuspidal Bianchi eigenform over $K$ of weight $(k,k)$ and level $\frkn$ such that $(p)\mid\frkn$. 
Let $a_{\qq}$ denote the $U_{\qq}$-eigenvalues of $\mathcal{F}$ where $v_{p}(a_{\qq})< (k+1)$ for all $\qq\mid p$.
For any ideal $\mathfrak{f}$, we define the operator $U_{\mathfrak{f}}$ as
$$U_{\mathfrak{f}}\coloneqq \prod_{\PP^{n}\mid\mid \mathfrak{f}} U_{\PP}^{n}.$$
Then there exists a locally analytic distribution $L_{p,\mathcal{F}}$ on the ray class group $G_{p^\infty}$ such that for any Hecke character $\Xi$ of infinity type $(0,0)\leq (a,b)\leq (k,k)$ and conductor $\mathfrak{f}$, we have
\begin{equation}
    L_{p,\mathcal{F}}(\tilX)=(\text{explicit factor})\dfrac{1}{a_{\mathfrak{f}}}\dfrac{\Lambda(\mathcal{F},\Xi,1)}{\Omega_{\mathcal{F}}},
\end{equation}
where $a_{\mathfrak{f}}$ is $U_{\mathfrak{f}}$-eigenvalue of $\mathcal{F}$, and $\Omega_{\mathcal{F}}$ is a complex period.
Furthermore, the explicit factor is given by 
\[\prod_{\PP\mid p} e(\PP)\cdot \frac{\Xi(x_{\mathfrak{f}})Dw\tau(\Xi^{-1})}{(-1)^{a+b+2}2\Xi_{\mathfrak{f}}(x_{\mathfrak{f}})},\]
where $-D$ is the discriminant of $K$, $\tau(\cdot)$ is the Gauss sum, $w=|\OO_{K}^\times|$, and $e_\PP$ is described in \cite[Equation 3.2]{LPS}.

The distribution $L_{p,\FF}$ is $(v_{p}(a_{\PP}))_{\PP\mid p}$-admissible and is unique with these interpolation and growth properties.
\end{thm}

See \cite[Definition 6.14]{Will} for the definition of admissibility. 
Naively we say a distribution $\mu$ is $(h_1, h_2)$-admissible if $\mu$ is $O(\log_{p}^{h_{1}})$ in one variable (corresponding to $\PP$) and is $O(\log_{p}^{h_{2}})$ in the other variable (corresponding to $\PPP$).

\begin{rem}
Although we have stated Theorem \ref{thmwilm} for the $p$ split case, Williams proved it without any conditions on $p$ in \cite{Will} and for $p=2$ as well.
\end{rem}
\begin{rem}
In \cite[Section 6.3]{Will}, Williams has defined the admissibility for locally analytic distributions on $\OO_{K}\otimes\Zp$. 
But using methods from \cite{Loefller1}, we can extend the notion of admissibility for locally analytic distributions on $\OO_{K}\otimes\Zp$ to locally analytic distributions on ray class group $G_{p^{\infty}}$.
See \cite[Section 7.4]{Will} for more details.
\end{rem}

\begin{rem}
\label{rem7.7}
For simplicity, assume $\Delta_{F}$ to be trivial and $G_{p^\infty} \cong \GG_{\PP}\times\GG_{\PPP}\cong \Zp\times\Zp$.
For real numbers $r,s\geq 0$ and let $E$ be a finite extension of $\Qp$, we define $D^{(r,s)}(G_{p^{\infty}},E)$ to be the set of distributions $\mu$ of $G_{p^\infty}$ such that for $m,n\in\ZZ_{\geq 0}$ and integers $0\leq i \leq \lfloor r \rfloor$ and $0\leq j\leq \lfloor s \rfloor$ ,
$$ v_{p}\left(\int_{\substack{a+p^{m}\Zp \\ b+p^{n}\Zp}} (x-a)^{i}(y-b)^{j} d\mu\right) \geq R-(r-i)m-(s-j)n,$$
for some constant $R\in\RR$ which only depends on $\mu$. 
See \cite[Theorem 2.3.2]{Colmez} for the analogous definitions in the one-variable setting. 
    
Since $p$ splits in the quadratic imaginary field $K$, we can identify $D^{(u,v)}(G_{p^\infty},E)$ with $\HH_{E,u,v}(G_{p^\infty})$ via Amice transform $\mu\mapsto A_{\mu}(T_{1}, T_{2})=\displaystyle\int_{\Zp\times\Zp} (1+T_{1})^{x}(1+T_{2})^{y}\mu(x,y)$. 
\end{rem}

Therefore if $p$ splits in $K$ as $(p)=\PP\PPP$, and $\FF$ is a Bianchi modular form which satisfies the conditions of the above theorem, then $L_{p,\mathcal{F}}$ is a $(v_{p}(a_{\PP}),v_{p}(a_{\PPP}))$-admissible locally analytic distribution on $G_{p^{\infty}}$, that is, by \cite{Will} and \cite{Loefller1}, 
\begin{equation}
    L_{p,\mathcal{F}}\in D^{(v_{p}(a_{\PP}),v_{p}(a_{\PPP}))}(G_{p^\infty},F),\label{admeqn}
\end{equation}
where $F$ is a finite extension of $\Qp$ containing $a_{\PP}$ and $a_{\PPP}$.
By Remark \ref{rem7.7}, we can view
\[ L_{p,\mathcal{F}}\in \HH_{F, v_{p}(a_{\PP}),v_{p}(a_{\PPP})}(G_{p^\infty}). \]

\section{Factorisation of $p$-adic $L$-functions of Bianchi modular forms}
In this section, we first define $p$-stabilizations of cuspidal Bianchi modular forms. 
Next, we modify our logarithmic matrix $\Mbar$ and prove the main factorization theorem of two-variable $p$-adic $L$-functions. 
From now we fix the imaginary quadratic field $K$ and all the cuspidal Bianchi modular forms $\FF$ we consider are over $K$. Also, the fixed odd prime $p$ splits in $K$ as $\PP\PPP$.
\subsection{$p$-stabilization of Bianchi modular forms}
We begin with a cuspidal Bianchi modular form $\FF$ of weight $(k,k)$ and level $\frkm$ such that $(p\OO_K, \frkm)=1$. 
We further assume $\FF$ is a Hecke eigen cuspform and for $\qq\in \{\PP,\PP\}$, we have $T_{\qq}\FF=a_{\qq}\FF$. 
Note that the norm of $\qq$ is $p$.
Consider the Hecke polynomial $X^{2}-a_{\qq}X+ u_{\qq}\cdot p^{k-1} = (X-\alpha_{\qq})(X-\beta_{\qq})$, where $u_{\qq}$ is some $p$-adic unit coming from the nebentypus of $\FF$.

There are four $p$-stabilisations $\mathcal{F}^{\alpha_{\PP},\alpha_{\PPP}}, \mathcal{F}^{\alpha_{\PP},\beta_{\PPP}},\mathcal{F}^{\beta_{\PP},\alpha_{\PPP}},$and $ \mathcal{F}^{\beta_{\PP},\beta_{\PPP}}$ of level $p\mathfrak{m}$, such that for $*\in \{\alpha_{\PP},\beta_{\PP}\}$ and $\dagger \in \{\alpha_{\PPP},\beta_{\PPP}\}$, we have
\begin{align}
    U_{\PP}(\FF^{*,\dagger})&=*\FF^{*,\dagger},\\
    U_{\PPP}(\FF^{*,\dagger})&=\dagger\FF^{*,\dagger}.
\end{align}

For more details about $p$-stabilizations, refer to \cite[Section 3.3]{LPS}.

\begin{assum}
    Throughout the article, we assume 
    \begin{enumerate}
       \item $\FF$ is good non-ordinary at both the primes $\PP$ and $\PPP$ i.e. $v_{p}(a_{\PP})$ and $v_{p}(a_{\PPP}) > 0$.
        \item $v_{p}(a_{\qq}) > \left\lfloor \dfrac{k}{p-1} \right\rfloor$ and $\alpha_{\qq}\neq\beta_{\qq},$ for $\qq\in\pset$.
    \end{enumerate}    
\end{assum}
Note that for any $x\in\{\alpha_{\PP}, \beta_{\PP}, \alpha_{\PPP}, \beta_{\PPP}\},$ we know $v_{p}(x)< k+1$.

Let $E$ be a finite extension of $\Qp$ which contains $\alpha_{\PP},\alpha_{\PPP},\beta_{\PP}$ and $\beta_{\PPP}$, and $K$.
\subsection{Modified logarithmic matrices}
\label{modi}
Recall the ring isomorphism $\tau_{\qqq}:\HH_{F}(\GG_1) \to \HH_{F}(\GG_{\qqq})$, for $\qq\in\pset$.
Consider the change of variable map between matrices induced by $\tau_{\qqq}$:
$$\mat_{\qqq}:M_{2,2}(\HH_{F}(\GG_1)) \to M_{2,2}(\HH_{F}(\GG_{\qqq})),$$
such that 
$$\mat_{\qqq}\left( \begin{pmatrix} A & B \\[6pt] C & D \end{pmatrix}\right) = \begin{pmatrix} \tau_{\qqq}(A) & \tau_{\qqq}(B) \\[6pt] \tau_{\qqq}(C) & \tau_{\qqq}(D) \end{pmatrix}.$$
Note that this map is also a ring isomorphism.

Let $\mathcal{F}$ be a Bianchi modular form of level $\mathfrak{m}$ coprime to $p$ and let $\alpha_{\qqq}$ and $\beta_{\qqq}$ be the $T_{\qqq}$-eigenvalues, for $\qqq\in\pset$.
From the Sections \ref{sec3}, \ref{sec4}, and \ref{sec5}, we construct a logarithmic matrix $\underline{M(\qq)}\in M_{2,2}(\HH_{E}(\GG_1))$ using $\alpha_{\qq}$ and $\beta_{\qq}$, since $v_{p}(a_{\qq})=v_{p}(\alpha_{\qq}+\beta_{\qq})> \left\lfloor \dfrac{k}{p-2}\right\rfloor$ and $\alpha_{\qq}\beta_{\qq}=u_{\qq} p^{k+1}$. Let $E/\Qp$ be a finite extension large enough to contain $\alpha_{\qq}, \beta_{\qq}$ and a fixed square root of $u_{\qq}$.
Let 
\begin{align*}
    A_{\vp,\qqq}&=\begin{pmatrix}
        0 & \dfrac{-1}{u_{\qq}p^{k+1}} \\[12pt]  1 & \dfrac{a_{\qqq}}{u_{\qq}p^{k+1}},
    \end{pmatrix},\\[10pt]
    Q_{\qqq }&= \begin{pmatrix} \alpha_{\qqq} & -\beta_{\qqq} \\[6pt]
-p^{k+1}u_{\qq} & p^{k+1}u_{\qq} \end{pmatrix},\\[10pt]
\underline{M_{\qqq}} &= \mat_{\qqq}(\underline{M(\qq)}).
\end{align*}
\begin{rem}
    From Proposition \ref{prop4} and Lemmas \ref{lem5}, \ref{lem1.16}, we deduce:
    \begin{enumerate}
        \item If $Q_{\qq}^{-1}(\underline{M(\qq)})=\begin{pmatrix}
            P_{1}(\qq) & P_{2}(\qq) \\
            P_{3}(\qq) & P_{4}(\qq)
        \end{pmatrix},$
        then $P_{1}(\qq), P_{2}(\qq)\in \HH_{E,v_{p}(\alpha_{\qq})}(\GG_1)$ and $P_{3}(\qq), P_{4}(\qq)\in \HH_{E,v_{p}(\beta_{\qq})}(\GG_1)$.
        \item The second row of $A_{\varphi, \qq}^{-n}\underline{M(\qq)}$ is divisible by $\Phi_{n-1,k+1}(\gamma_{0})$ over $\HH_{E}(\GG_1)$.
        \item The determinant $\det(\underline{M(\qq)})$ is $\dfrac{\log_{p,k+1}(\gamma_{0})}{\delta_{k+1}(\gamma_{0}-1)}$, up to a unit in $\Lambda_{E}(\GG_1)$. 
    \end{enumerate}
\end{rem}
\begin{rem}
    Here we get $k+1$ since $k\in\ZZ_{\geq 0}$ and not $k\in\ZZ_{\geq 2}$.
\end{rem}

\begin{thm}
\label{thm3.2}
For $\qq\in\pset$ the following are true,
    \begin{enumerate}
        \item The elements in the first row of $Q_{\qqq}^{-1}\underline{M_{\qqq}}$ are inside $\HH_{E, v_{p}(\alpha_{\qqq})}(\GG_{\qqq})$, while the elements in the second row  are in $\HH_{E, v_{p}(\beta_{\qqq})}(\GG_{\qqq})$.\label{3.2.1}
        \item The second row of the matrix $A_{\vp,\qq}^{-n}\underline{M_{\qqq}}$ is divisible by the cyclotomic polynomial $\Phi_{n-1,k+1}(\gamma_{\qqq})$.\label{3.2.2}
        \item The determinant of $\underline{M_{\qqq}}$ is $\dfrac{\log_{p,k+1}(\gamma_{\qqq})}{\delta_{k+1}(\gamma_\qqq-1)}$ upto a unit in $\Lambda_{E}(\GG_{\qqq})$.\label{3.2.3}
    \end{enumerate}
\end{thm}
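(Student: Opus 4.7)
The plan is to derive each part of the theorem from its one-variable analogue in Section~5 by transporting it through the change-of-variable isomorphism $\mat_{\qqq}$. For each $\qq\in\pset$, the hypotheses of Section~2 are satisfied by the one-variable data $(v,a,\alpha,\beta)=(1, a_{\qqq}, \alpha_{\qqq}, \beta_{\qqq})$ (by construction $\alpha_{\qqq}\beta_{\qqq}=p^{k-1}$, and $v_p(a_{\qqq})>\lfloor(k-2)/(p-1)\rfloor$ by the standing assumption on $\FF$). Applying the machinery of Sections~3--5 to this data yields a logarithmic matrix $\Mbar_{\qqq}\in M_{2,2}(\HH_{E}(\GG_1))$ together with matrices $A_{\vp}$ and $Q$ that coincide, upon the identification $\gamma_0\leftrightarrow\gamma_{\qqq}$, with $A_{\vp,\qqq}$ and $Q_{\qqq}$ from Section~8; and by definition $\underline{M_{\qqq}}=\mat_{\qqq}(\Mbar_{\qqq})$.

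The key observation is that $\tau_{\qqq}$ is a topological ring isomorphism sending the topological generator $\gamma_0$ of $\GG_1$ to the topological generator $\gamma_{\qqq}$ of $\GG_{\qqq}$. Since it acts on power series simply by relabelling the variable, it preserves coefficients and hence restricts to an isomorphism $\HH_{E,r}(\GG_1)\xrightarrow{\sim}\HH_{E,r}(\GG_{\qqq})$ for every $r\geq 0$, sends units in $\Lambda_{E}(\GG_1)$ to units in $\Lambda_{E}(\GG_{\qqq})$, and maps $\Phi_{n-1,k-1}(\gamma_0)$, $\log_{p,k-1}(\gamma_0)$, and $\delta_{k-1}(\gamma_0-1)$ to $\Phi_{n-1,k-1}(\gamma_{\qqq})$, $\log_{p,k-1}(\gamma_{\qqq})$, and $\delta_{k-1}(\gamma_{\qqq}-1)$ respectively. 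The induced map $\mat_{\qqq}$ on $2\times2$ matrices is therefore a ring homomorphism that preserves determinants, inverses of scalar matrices, and divisibility.

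With these observations, each item reduces to its one-variable counterpart. For \eqref{3.2.1}, one writes $Q_{\qqq}^{-1}\underline{M_{\qqq}}=\mat_{\qqq}(Q^{-1}\Mbar_{\qqq})$ (noting that the scalar entries of $Q^{-1}$ are fixed by $\tau_{\qqq}$) and invokes Proposition~\ref{prop4}, which controls the growth of the rows of $Q^{-1}\Mbar_{\qqq}$; the growth-preserving property of $\tau_{\qqq}$ then yields the desired assertion over $\HH_E(\GG_{\qqq})$. For \eqref{3.2.2}, one applies $\mat_{\qqq}$ to the identity $A_{\vp}^{-n}\Mbar_{\qqq}$ whose second row, by Lemma~\ref{lem5}, is divisible by $\Phi_{n-1,k-1}(\gamma_0)$; divisibility is transported by $\mat_{\qqq}$ to divisibility by $\Phi_{n-1,k-1}(\gamma_{\qqq})$. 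For \eqref{3.2.3}, since $\mat_{\qqq}$ is a ring homomorphism, $\det(\underline{M_{\qqq}})=\tau_{\qqq}(\det(\Mbar_{\qqq}))$, and Lemma~\ref{lem1.16} together with the unit-preserving property of $\tau_{\qqq}$ gives the claimed formula up to a unit in $\Lambda_{E}(\GG_{\qqq})$.

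No substantive obstacle is expected: the entire argument is a formal translation along the ring isomorphism $\tau_{\qqq}$. The only point that warrants a brief verification is that $\tau_{\qqq}$ respects the growth filtration $\HH_{E,r}$, which is immediate since the coefficients of a power series are unchanged under the substitution $\gamma_0-1\mapsto\gamma_{\qqq}-1$, and hence the sup-norm growth condition is preserved.
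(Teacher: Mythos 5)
Your proof is correct and follows essentially the same route as the paper: all three parts are transported from the one-variable statements (Proposition~\ref{prop4}, Lemma~\ref{lem5}, Lemma~\ref{lem1.16}) through the ring isomorphism $\tau_{\qqq}$ and the induced map $\mat_{\qqq}$. You merely spell out more explicitly what the paper leaves implicit, namely that $\tau_{\qqq}$ preserves the growth filtration, units, and divisibility since it only relabels the variable without altering coefficients.
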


\begin{proof}
    First statement follows from Proposition \ref{prop4} and the definitions of $Q_{\qqq}$ and $\underline{M_{\qqq}}$.
    Second statement follows from Lemma \ref{lem5}.

    For the last statement, from the definitions we have $\underline{M_{\qqq}}=\mat_{\qqq}(\underline{M(\qq)})$. Thus, 
    \begin{align*}
        \det(\underline{M_{\qqq}})&=\det(\mat_{\qqq}(\underline{M(\qq)})),\\
                                 &= \tau_{\qqq}(\det(\underline{M(\qq)})),
    \end{align*}
    since $\tau_{\qqq}$ is a ring isomorphism.
    Hence the result follows from Lemma \ref{lem1.16}.
\end{proof}

For the rest of the article, we will write
\begin{equation}
    Q_{\qqq}^{-1}\underline{M_{\qqq}} = \begin{pmatrix} P_{1,\qqq} & P_{2,\qqq} \\[6pt] P_{3,\qqq} & P_{4,\qqq}\end{pmatrix}. \label{qm}
\end{equation}

\subsection{The main theorem and its proof}
In this section, we generalize the results of \cite{Lei3} and use them to decompose the two variable $p$-adic $L$-functions of Bianchi modular forms.

For the Bianchi cuspform $\mathcal{F}$ of level $\mathfrak{m}$ which is not divisible by $p$, let $a_{\qqq}$ be the $T_{\qqq}$-eigenvalue of $\mathcal{F}$ for $\qqq\in\pset$.
Recall that we have four $p$-stabilizations $\mathcal{F}^{\alpha_{\PP},\alpha_{\PPP}}, \mathcal{F}^{\alpha_{\PP},\beta_{\PPP}}, \mathcal{F}^{\beta_{\PP},\alpha_{\PPP}}$, and $ \mathcal{F}^{\beta_{\PP},\beta_{\PPP}}$ of level $p\frkm$, where $\alpha_{\qqq},\beta_{\qqq}$ are the roots of Hecke polynomial $X^{2}-a_{\qqq}X+u_{\qq}p^{k-1}$, for $\qqq\in\pset$.

Therefore, from Theorem \ref{thmwilm} and equation \eqref{admeqn}, for $*\in\{\alpha_{\PP},\beta_{\PP}\}$ and $\dagger\in\{\alpha_{\PPP},\beta_{\PPP}\}$, we have
$$L_{*,\dagger}\coloneqq L_{p,\mathcal{F}^{*,\dagger}} \in \HH_{E, v_{p}(*),v_{p}(\dagger)}(G_{p^\infty}),$$
since $v_{p}(*), v_{p}(\dagger)< k+1$.
Moreover, for any Hecke character $\Xi$ of infinity type $(0,0)\leq (a,b) \leq (k,k)$ and conductor $\PP^{n_{\PP}}\PPP^{n_{\PPP}}$ with $n_{\PP},n_{\PPP}\geq 1$, we have
\begin{align}
    L_{\alpha_{\PP},\alpha_{\PPP}}(\tilX)&=\alpha_{\PP}^{-n_{\PP}}\alpha_{\PPP}^{-n_{\PPP}}C_{a,b,\tilX}, \label{44}\\ 
    L_{\alpha_{\PP},\beta_{\PPP}}(\tilX)&=\alpha_{\PP}^{-n_{\PP}}\beta_{\PPP}^{-n_{\PPP}}C_{a,b,\tilX}, \label{45} \\
    L_{\beta_{\PP},\alpha_{\PPP}}(\tilX)&=\beta_{\PP}^{-n_{\PP}}\alpha_{\PPP}^{-n_{\PPP}}C_{a,b,\tilX}, \label{46} \\
    L_{\beta_{\PP},\beta_{\PPP}}(\tilX)&=\beta_{\PP}^{-n_{\PP}}\beta_{\PPP}^{-n_{\PPP}}C_{a,b,\tilX} \label{47},
\end{align}
where $C_{a,b,\tilX}\in \overline{\Qp}$ is a constant independent of $\alpha_{\PP}, \beta_{\PP}, \alpha_{\PPP}, \beta_{\PPP}$.
More precisely, $C_{a,b,\tilX}$ is $\text{(explicit factor)}\dfrac{\Lambda(\FF,\Xi,1)}{\Omega_\FF}$, since the conductor of $\Xi$ is $\PP^{n_{\PP}}\PPP^{n_{\PPP}}$ with $n_{\PP}, n_{\PPP}\in\ZZ_{\geq 0}$, we know $L(\FF, \Xi, 1)=L(\FF^{*,\dagger},\Xi,1)$ for all $p$-stabilizations $\FF^{*,\dagger}$ of $\FF$ and hence $C_{a,b,\tilX}$ is independent of $\alpha_{\PP},\beta_{\PP},\alpha_{\PPP},\beta_{\PPP}$. 

The main theorem is as follows:
\begin{thm}
    \label{mainthm2}
    There exist two variable power series with bounded coefficients, that is, there exist $L_{\sharp,\sharp}, L_{\sharp,\flat}, L_{\flat,\sharp}, L_{\flat,\flat}\in \Lambda_{E}(G_{p^{\infty}})$ such that
    \begin{equation}
        \begin{pmatrix}
            L_{\alpha_{\PP},\alpha_{\PPP}} & L_{\beta_{\PP},\alpha_{\PPP}}\\[6pt]
            L_{\alpha_{\PP},\beta_{\PPP}} & L_{\beta_{\PP},\beta_{\PPP}}
        \end{pmatrix}
        = Q^{-1}_{\PPP}\underline{M_{\PPP}} \begin{pmatrix}
            L_{\sharp,\sharp} & L_{\flat,\sharp}\\[6pt]
            L_{\sharp,\flat} & L_{\flat,\flat}
        \end{pmatrix} (Q^{-1}_{\PP}\underline{M_{\PP}})^{T}.
    \end{equation}
\end{thm}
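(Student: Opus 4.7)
The plan is to iterate the one-variable decomposition of Theorem \ref{thm1} in each of the two variables $\PP$ and $\PPP$. Writing $U_{\qqq} = Q_{\qqq}^{-1}\underline{M_{\qqq}}$, I first establish a two-variable analogue of Theorem \ref{thm1}: if $F_\alpha, F_\beta \in \HH_{E, v_p(\lambda_\PP), s}(G_{p^\infty})$ satisfy $F_{\lambda_\PP}(\tilX) = \lambda_\PP^{-n_\PP}\cdot D_{\tilX}$ at Hecke characters $\tilX$ (with $D_{\tilX}$ independent of $\alpha_\PP, \beta_\PP$), then there exist $F_\sharp, F_\flat \in \HH_{E, 0, s}(G_{p^\infty})$ with $(F_\alpha, F_\beta)^T = U_\PP (F_\sharp, F_\flat)^T$. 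The proof mirrors Theorem \ref{thm1}: set $F_\sharp = (P_{4,\PP}F_\alpha - P_{2,\PP}F_\beta)/\det U_\PP$ and similarly $F_\flat$. For each root $\chi_\PP^j\omega_\PP$ of $\det U_\PP$, the numerator specializes to a distribution in the $\PPP$-variable that vanishes at every Hecke character $\chi_\PPP^b\omega_\PPP$ with $n_\PPP \geq 1$ (by the same cancellation identity used in the proof of Theorem \ref{thm1}), hence vanishes identically by $p$-adic uniqueness applied to distributions of growth $s < k-1$ in the $\PPP$-variable. Boundedness of the quotient then follows from $\det U_\PP \sim \log_p^{k-1}$ (Lemma \ref{5.4}) and the input growth $v_p(\lambda_\PP)$.

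Given this analogue, I apply it twice. First, to each column of $\mathbf{L}$ in the $\PPP$-direction: the column $(L_{*, \alpha_\PPP}, L_{*, \beta_\PPP})^T$ satisfies the hypothesis with $D_{\tilX} = *^{-n_\PP}C_{a,b,\tilX}$, yielding intermediate functions $L_{*, \dagger} \in \HH_{E, v_p(*), 0}(G_{p^\infty})$ and
\[
\begin{pmatrix} L_{\alpha_{\PP},\alpha_{\PPP}} & L_{\beta_{\PP},\alpha_{\PPP}}\\ L_{\alpha_{\PP},\beta_{\PPP}} & L_{\beta_{\PP},\beta_{\PPP}}\end{pmatrix} = U_{\PPP}\begin{pmatrix} L_{\alpha_\PP,\sharp} & L_{\beta_\PP,\sharp}\\ L_{\alpha_\PP,\flat} & L_{\beta_\PP,\flat}\end{pmatrix}.
\]
Second, to each row of this intermediate matrix in the $\PP$-direction, yielding $L_{\dagger, \ddagger} \in \Lambda_E(G_{p^\infty})$. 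Since the second application acts on the right by $(Q_\PP^{-1}\underline{M_\PP})^T$, assembling the two applications produces the claimed factorization.

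The main obstacle is justifying the second application: one must verify that the intermediate functions $L_{*, \dagger}$, defined by the quotient $L_{*,\sharp} = (P_{4,\PPP}L_{*, \alpha_\PPP} - P_{2,\PPP}L_{*, \beta_\PPP})/\det U_\PPP$, satisfy the interpolation hypothesis in the $\PP$-direction, namely $L_{*, \dagger}(\tilX) = *^{-n_\PP}\cdot D'_{\tilX}$ with $D'_{\tilX}$ independent of $*$. At a Hecke character $\tilX$ whose $\PPP$-part $\tilX_\PPP$ is \emph{not} a root of $\det U_\PPP$, a direct evaluation of the quotient, together with the interpolation formulas \eqref{44}--\eqref{47}, gives the desired form. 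At the interpolation characters of Williams one has $n_\PPP \geq 1$, so $\tilX_\PPP$ \emph{is} a root of $\det U_\PPP$ and the formula is $0/0$; this is resolved by a density argument: the set of characters where $\tilX_\PPP$ is not a root is dense in the character space of $G_{p^\infty}$ (the roots form a discrete subset), and since $L_{*,\dagger}$ lies in $\HH_{E, v_p(*), 0}(G_{p^\infty})$ it is $p$-adic analytic, so the interpolation extends. The two-variable analogue then applies to each row, and the resulting bounded power series $L_{\sharp, \sharp}, L_{\sharp, \flat}, L_{\flat, \sharp}, L_{\flat, \flat}$ satisfy the theorem.
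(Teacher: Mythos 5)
Your overall strategy — iterating the one-variable decomposition of Theorem \ref{thm1} first in one prime's variable and then in the other — is the same as the paper's (Propositions \ref{prop3.6}, \ref{prop3.7}, and \ref{prop3.8}), up to the inessential swap of the order $\PP \leftrightarrow \PPP$. The first application is sound. The gap lies in the step you yourself flag as ``the main obstacle,'' namely establishing the interpolation property of the intermediate functions $L_{*,\dagger}$ so that the second application of Theorem \ref{thm1} is licit; your proposed density argument does not close it.

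Here is why the density argument fails. You correctly observe that direct evaluation of the quotient $L_{*,\sharp} = (P_{4,\PPP}L_{*,\alpha_{\PPP}} - P_{2,\PPP}L_{*,\beta_{\PPP}})/\det(Q_{\PPP}^{-1}\Mbarpp)$ is possible at Hecke characters $\tilX$ whose $\PPP$-part is not a zero of $\det(Q_{\PPP}^{-1}\Mbarpp)$. But the zeros of $\det(Q_{\PPP}^{-1}\Mbarpp)$ are exactly the points $\gamma_{\PPP} = u^{b}\zeta_{\PPP}$ with $0 \le b \le k-2$ and $\zeta_{\PPP} \neq 1$ a $p$-power root of unity, i.e.\ precisely the $\PPP$-parts of Hecke characters with $n_{\PPP} \ge 2$. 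The only Hecke characters in the Williams interpolation range where direct evaluation works are therefore those with $n_{\PPP} = 1$, which give only finitely many values of $\tilX_{\PPP}$, namely $u^{0},\dots,u^{k-2}$. This set is far from dense in the $\PPP$-direction, and a nonzero one-variable power series can certainly vanish at $k-1$ points, so one cannot run the uniqueness lemma (Lemma \ref{lem3.5}) to conclude that $\beta_{\PP}^{n_{\PP}}L_{\alpha_{\PP},\dagger}^{(\widetilde{\omega_{1,\PP}})}$ and $\alpha_{\PP}^{n_{\PP}}L_{\beta_{\PP},\dagger}^{(\widetilde{\omega_{1,\PP}})}$ agree. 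The general density of characters where $\tilX_{\PPP}$ is not a root is of no help, because at non-Hecke characters the interpolation formulas \eqref{44}--\eqref{47} are not available, so you have no handle on the values of $L_{*,\alpha_{\PPP}}$ and $L_{*,\beta_{\PPP}}$ there.

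What is actually needed, and what the paper supplies, is a way to evaluate $L_{*,\dagger}$ at the removable singularities themselves. The paper first proves (Proposition \ref{prop3.7}) that the partial derivatives $\delp L_{*,\bullet}$ satisfy interpolation formulas of the form $\delp L_{\alpha_{\PP},*}(\tilX) = *^{-n_{\PPP}}A_{a,b,\tilX}$, etc., using Lemma \ref{lem3.5} in the $\PP$-variable to upgrade the pointwise identities from \eqref{44}--\eqref{47} to identities of power series before differentiating. Then Proposition \ref{prop3.8} runs an L'H\^{o}pital-type computation: differentiating the identity $L_{\sharp,*}\det(Q_{\PP}^{-1}\Mbarp) = P_{4,\PP}L_{\alpha_{\PP},*} - P_{2,\PP}L_{\beta_{\PP},*}$, evaluating at a bad character where $\det$ vanishes but $\delp\det$ does not, and invoking Proposition \ref{prop3.7}, one reads off that $L_{\sharp,*}(\widecheck{\Xi}) = (*)^{-n_{\PPP}}$ times a quantity independent of $*$. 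Only then does Theorem \ref{thm1} apply a second time. Your proposal needs this derivative argument (or an equivalent explicit evaluation at the removable singularities); analyticity alone does not determine the values there.
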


\begin{rem}
    Theorem \ref{mainthm2} is analogous to \cite[Theorem 2.2]{Lei3} and \cite[Equation (24)]{BL1}.
\end{rem}

We first factorize through the variable $\gamma_{\PP}$ and then through $\gamma_{\PPP}$. 
In other words, we will first decompose the matrix $ \begin{pmatrix}
            L_{\alpha_{\PP},\alpha_{\PPP}} & L_{\beta_{\PP},\alpha_{\PPP}}\\[6pt]
            L_{\alpha_{\PP},\beta_{\PPP}} & L_{\beta_{\PP},\beta_{\PPP}}
        \end{pmatrix}$ in terms of matrix, say $C$, and $\Mbarp$. Then we decompose $C$ as a product of matrices $\begin{pmatrix}
            L_{\sharp,\sharp} & L_{\flat,\sharp}\\[6pt]
            L_{\sharp,\flat} & L_{\flat,\flat}
        \end{pmatrix}$ and $\Mbarpp$.  

First, we need the following classical result: 
\begin{lem}
\label{lem3.5}
    Let $\gamma$ be a topological generator of $\Zp$.
    Let $s,h$ be non-negative integers and assume $s<h$.
    If $F\in E[[\gamma-1]]$ is $O(\log_{p}^{s})$ and vanishes at all characters of type $\chi^{i}\omega$ for all $0\leq i \leq h-1$, where $\chi$ is any character which sends $\gamma$ to another topological generator $u\in 1 +p\Zp$ (for example the cyclotomic character) and $\omega$ is any Dirichlet character of conductor $p^n, n\geq1$,  then $F$ is identically $0$. 
\end{lem}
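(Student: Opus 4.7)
The plan is to combine the many zeros of $F$ forced by the interpolation hypothesis with the growth bound, via a Weierstrass-type divisibility step in the distribution algebra $\HH_{E}(\GG_1)$ followed by a growth comparison that forces the quotient to vanish.

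First I would identify the natural ``annihilator'' of the vanishing hypothesis. For each $0 \leq i \leq h-1$, set
\[
\ell_{i} \;=\; \frac{\log(1+X)}{\log_{p}(u)} - i \;\in\; \HH_{E}(\GG_1),
\]
which has growth order exactly $1$. A direct computation using $\chi^{i}\omega(\gamma) = u^{i}\omega(\gamma)$ and the fact that $\log_{p}$ kills all $p$-power roots of unity shows
\[
\ell_{i}\bigl(\chi^{i}\omega(\gamma)-1\bigr) \;=\; \frac{\log_{p}(u^{i}\omega(\gamma))}{\log_{p}(u)} - i \;=\; \frac{\log_{p}(\omega(\gamma))}{\log_{p}(u)} \;=\; 0
\]
for every Dirichlet character $\omega$ of $p$-power conductor. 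Moreover these are all the zeros of $\ell_{i}$ in the open unit disk, and the zero loci for distinct values of $i$ are pairwise disjoint.

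Next I would argue that the product $L := \prod_{i=0}^{h-1}\ell_{i}$ divides $F$ inside $\HH_{E}(\GG_1)$. By hypothesis $F$ vanishes at every zero of $L$ (and each is a simple zero of $L$ by the preceding step), so by Weierstrass preparation for power series convergent on the open unit disk (the same divisibility mechanism used in the proof of Theorem \ref{thm1} and in \cite{LLZ1,BL1}) the quotient $G := F/L$ is a well-defined element of $\HH_{E}(\GG_1)$.

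Finally I would compare growth orders. Since $L$ is a product of $h$ factors each of growth order $1$, the growth order of $L$ equals $h$, while $F$ has growth order at most $s$; by the standard division lemma for $\HH_{E}(\GG_1)$ the quotient $G$ has growth order at most $s-h$. In particular there is a constant $C$ with $\lv G \lv_{\rho_{t}} \leq C\, p^{t(s-h)}$ for every $t \geq 1$. Because $s - h < 0$, the right-hand side tends to $0$ as $t \to \infty$. On the other hand, $\lv G \lv_{\rho_{t}}$ is non-decreasing in $t$, since the disks $\{|z|_{p} \leq \rho_{t}\}$ are nested and exhaust the open unit disk. These two facts together force $\lv G \lv_{\rho_{t}} = 0$ for every $t$, whence $G = 0$ and therefore $F = 0$.

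The main technical obstacle is the divisibility step: one must verify both that the interpolation hypothesis genuinely accounts for every zero of $L$, and that Weierstrass preparation applies cleanly in the non-Noetherian ring $\HH_{E}(\GG_1)$ so that the quotient $G$ lies in $\HH_{E}(\GG_1)$ with growth order at most $s-h$. Both are standard after the preparatory material of Sections 2 and 5 and the analogous arguments in \cite{LLZ1,BL1}, so once they are secured the final growth comparison is routine.
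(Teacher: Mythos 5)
Your proof is correct and follows the same strategy as the paper's proof sketch (which cites Amice--Velu and Vishik): both divide $F$ by the logarithmic factor $\prod_{i=0}^{h-1}\ell_i$, which equals $\log_{p,h}(\gamma_0)$ up to the nonzero constant $\log_p(u)^{-h}$, and then compare growth orders. You merely fill in the details the paper's sketch elides --- the explicit identification of the zeros of the $\ell_i$ with the interpolation points, and the multiplicativity of $\lVert\cdot\rVert_{\rho_t}$ --- and you conclude directly that the quotient vanishes, whereas the paper phrases the same growth comparison as a contradiction with $F$ being $o(\log_p^h)$.
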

\begin{proof}
    See \cite[Lemme II.2.5]{AV} and \cite[Lemma 2.10]{Vishik}.

    We give a sketch here.
    We say a power series $G_1\in\HH_{E}$ is $o(G_2)$ if 
    \[\lv G_1 \lv_{\rho} = o(\lv G_2\lv_{\rho})\]
    as $\rho\to 1^{-}$.
    The power series $F$ is $o(\log_{p}^h)$, since $F$ is $O(\log_{p}^{s})$ and $s<h$.
    Suppose $F$ is not $0$ and, from the assumption, the zeros of $F$ are of the form $u^{j}\zeta-1$, where $\zeta$ is a $p^{n}$-th root of unity, for all $0 \leq j \leq h-1 $ and $n\geq1$. Then, by Remark \ref{logproplem}, we have 
    $$F=\log_{p,h}(\gamma)\cdot G,$$
    where $G\neq 0$ is another power series.
    Note that $\log_{p,h}(\gamma)$ is not $o(\log_{p}^h)$, and therefore $F$ is not $o(\log_{p}^h)$.
    This is a contradiction.
\end{proof}

Recall that $E$ is a finite extension of $\Qp$ containing $\alpha_{\PP}, \beta_{\PP}, \alpha_{\PPP}$ and $\beta_{\PPP}$. 
Let $S_{\PP}$ be the set of all Hecke characters on the ray class group $G_{p^{\infty}}$ with infinity type $(0,0)\leq(a,0)\leq(k,0)$ and conductor $\PP^{n_{\PP}}$, for $n_{\PP}>1$.
Similarly, let $S_{\PPP}$ be the set of all Hecke characters on the ray class group $G_{p^{\infty}}$ with infinity type $(0,0)\leq(0,b)\leq(0,k)$ and conductor $\PPP^{n_{\PPP}}$, for $n_{\PPP}>1$.

\begin{prop}
    \label{prop3.6}
    There exist $L_{\sharp, \alpha_{\PPP}},L_{\flat, \alpha_{\PPP}} \in \HH_{E, 0,v_{p}(\alpha_{\PPP})}(G_{p^{\infty}})$ and $L_{\sharp, \beta_{\PPP}},L_{\flat, \beta_{\PPP}} \in \HH_{E,0,v_{p}(\beta_{\PPP})}(G_{p^{\infty}})$ such that
    \begin{equation*}
        \begin{pmatrix}
            L_{\alpha_{\PP},\alpha_{\PPP}} & L_{\beta_{\PP},\alpha_{\PPP}}\\[6pt]
            L_{\alpha_{\PP},\beta_{\PPP}} & L_{\beta_{\PP},\beta_{\PPP}}
        \end{pmatrix} = \begin{pmatrix}
            L_{\sharp,\alpha_{\PPP}} & L_{\flat,\alpha_{\PPP}}\\[6pt]
            L_{\sharp,\beta_{\PPP}} & L_{\flat,\beta_{\PPP}}
        \end{pmatrix} (Q^{-1}_{\PP}\Mbarp)^T. 
    \end{equation*}
\end{prop}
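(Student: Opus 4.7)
The plan is to mimic the explicit formulas from the proof of Theorem \ref{thm1} (equation \eqref{37}) applied in families over the $\gamma_{\PPP}$-variable. Writing $Q_{\PP}^{-1}\Mbarp = \begin{pmatrix} P_{1,\PP} & P_{2,\PP} \\ P_{3,\PP} & P_{4,\PP}\end{pmatrix}$ as in \eqref{qm}, I would set, for $\bullet \in \{\alpha_{\PPP}, \beta_{\PPP}\}$,
\begin{equation*}
   L_{\sharp, \bullet} := \dfrac{P_{4,\PP}\,L_{\alpha_{\PP}, \bullet} - P_{2,\PP}\,L_{\beta_{\PP}, \bullet}}{\det(Q^{-1}_{\PP}\Mbarp)}, \qquad
   L_{\flat, \bullet} := \dfrac{-P_{3,\PP}\,L_{\alpha_{\PP}, \bullet} + P_{1,\PP}\,L_{\beta_{\PP}, \bullet}}{\det(Q^{-1}_{\PP}\Mbarp)},
\end{equation*}
the denominator depending only on $\gamma_{\PP}$. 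Once these are shown to be well-defined elements of the asserted spaces, the matrix identity follows formally: transposing, it amounts to the column identities $\begin{pmatrix} L_{\alpha_{\PP},\bullet} \\ L_{\beta_{\PP},\bullet}\end{pmatrix} = Q_{\PP}^{-1}\Mbarp\begin{pmatrix} L_{\sharp,\bullet} \\ L_{\flat,\bullet}\end{pmatrix}$ for $\bullet \in \{\alpha_{\PPP}, \beta_{\PPP}\}$, which are the defining relations (the exact analogue of \eqref{37}).

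The main step is divisibility. By Theorem \ref{thm3.2}(3) combined with Lemma \ref{5.4}, $\det(Q_{\PP}^{-1}\Mbarp) \sim \log_{p}^{k-1}$ in the $\gamma_{\PP}$-variable, with zeros at $\gamma_{\PP} = u^{j}\zeta_{\PP}$ for $0 \leq j \leq k-2$ and $\zeta_{\PP}$ a non-trivial $p$-power root of unity. I fix such $(j, \zeta_{\PP})$ and specialize the numerator at $\gamma_{\PP} = u^{j}\zeta_{\PP}$; this yields an element of $\HH_{E',v_{p}(\bullet)}(\GG_{\PPP})$ for a suitable finite extension $E'/E$. To see this specialization vanishes identically, I further evaluate at $\widetilde{\Xi}_{\PPP}$ for an arbitrary Hecke character $\Xi_{\PPP} \in S_{\PPP}$ of infinity type $(0,b)$ with $0 \leq b \leq k-2$: this is the value of the numerator at the total character $\chi_{\PP}^{j}\omega_{\PP}\cdot\widetilde{\Xi}_{\PPP}$, which by the interpolation formulas \eqref{44}--\eqref{47} and the explicit form of $Q_{\PP}^{-1}\Mbarp$ at $\chi_{\PP}^{j}\omega_{\PP}$ established in \eqref{35} equals $\alpha_{\PPP}^{-n_{\PPP}}\,C_{j,b,\widetilde{\Xi}}\,\bigl(P_{4,\PP}(\chi_{\PP}^{j}\omega_{\PP})\alpha_{\PP}^{-n_{\PP}} - P_{2,\PP}(\chi_{\PP}^{j}\omega_{\PP})\beta_{\PP}^{-n_{\PP}}\bigr)$, which is $0$ by exactly the cancellation at the end of the proof of Theorem \ref{thm1}. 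The non-ordinary assumption forces $v_{p}(\bullet) < k-1$ (since $v_{p}(\alpha_{\PPP}) + v_{p}(\beta_{\PPP}) = k-1$ with both strictly positive), so Lemma \ref{lem3.5} (applied in $\gamma_{\PPP}$ with $h = k-1$ and $s = v_{p}(\bullet)$) shows the specialized numerator is identically zero.

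Hence each $\gamma_{\PP}$-zero of $\det(Q_{\PP}^{-1}\Mbarp)$ is a zero of the numerator viewed as a power series in $\gamma_{\PP}$ with coefficients in $\HH_{E,v_{p}(\bullet)}(\GG_{\PPP})$, so the division makes sense. By Theorem \ref{thm3.2}(1) the numerator lies in $\HH_{E,k-1,v_{p}(\bullet)}(G_{p^{\infty}})$ and the denominator has sharp growth $k-1$ in $\gamma_{\PP}$ alone, so the quotient indeed has bidegree $(0, v_{p}(\bullet))$, as asserted. I expect the main obstacle to be precisely this upgrade from pointwise vanishing to genuine two-variable divisibility with controlled growth; my proposed resolution is to kill the $\gamma_{\PPP}$-variable first via Lemma \ref{lem3.5} (which succeeds thanks to the bound $v_{p}(\bullet) < k-1$), and only then divide by the one-variable factor $\det(Q_{\PP}^{-1}\Mbarp)$ in $\gamma_{\PP}$, thereby reducing to the one-variable situation already handled in Theorem \ref{thm1}.
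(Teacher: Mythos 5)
Your proposal is correct and follows essentially the same route as the paper: at each $\gamma_{\PP}$-zero of $\det(Q_{\PP}^{-1}\Mbarp)$ you use the interpolation formulas \eqref{44}--\eqref{47} and the cancellation inside the proof of Theorem \ref{thm1} to show the numerator vanishes at all $\widetilde{\omega_{1}}_{\PP}\widetilde{\omega_{2}}_{\PPP}$, then apply Lemma \ref{lem3.5} in $\gamma_{\PPP}$ (valid since $v_{p}(\bullet)<k-1$) to get identical vanishing and hence divisibility, with the growth bound coming from $\det(Q_{\PP}^{-1}\Mbarp)\sim\log_{p}^{k-1}$; the paper phrases this by first applying Theorem \ref{thm1} to the one-variable slices $L^{\Delta(\tilX_{\PPP})}_{*,\bullet}$, but the underlying mechanism is identical. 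The one small omission in your write-up is the reduction to isotypic components of the finite group $\Delta\subset G_{p^{\infty}}$, which the paper carries out explicitly (dividing each $\eta$-component and then reassembling) and which is needed to make sense of ``specializing $\gamma_{\PP}$'' as an operation on $\HH_{E,r,s}(G_{p^{\infty}})$ rather than just on $\HH_{E,r,s}$; this is routine.
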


\begin{proof}
This is a generalization of \cite[Proposition 2.3]{Lei3}.
Recall that, for $*\in\{\alpha_{\PP}, \beta_{\PP}\}$ and $\dagger\in\{\alpha_{\PPP},\beta_{\PPP}\}$, $L_{*,\dagger}$ are locally analytic distributions on the ray class group $G_{p^{\infty}}\cong \Delta_{K}\times \overline{\langle \gamma_{\PP} \rangle}\times \overline{\langle \gamma_{\PPP} \rangle}$.
For any character $\eta:\Delta_{K} \to \overline{\Zp}^{\times}$, we will prove that for $\Laa^{\eta}, \Lba^{\eta} \in \HH_{E,v_{p}(\alpha_{\PP}),v_{p}(\alpha_{\PPP})},$ there exist  $\Lsa^{\eta},\Lfa^{\eta}\in \HH_{E, 0,v_{p}(\alpha_{\PPP})}$ such that
    \begin{equation}
        \begin{pmatrix}
            \Laa^{\eta} \\[6pt]
            \Lba^{\eta}
        \end{pmatrix}
        =\matqp\Mbarp\begin{pmatrix}
            \Lsa^{\eta} \\[6pt]
            \Lfa^{\eta}
        \end{pmatrix}. \label{50}
    \end{equation}
and therefore,
\begin{equation*}
    \begin{pmatrix}
            \Laa \\[6pt]
            \Lba
        \end{pmatrix}
        =\matqp\Mbarp\begin{pmatrix}
            \Lsa \\[6pt]
            \Lfa
        \end{pmatrix}.    
\end{equation*}
Fix $\eta=1$.
The proof for other characters is similar.

Let $\Xi$ be any Hecke character of  $K$ of infinity type $(0,0) \leq (a,b) \leq (k,k)$ and conductor $\PP^{n_{\PP}}\PPP^{n_{\PPP}}$, where $n_{\PP}, n_{\PPP}\geq 1$.
Using equations \eqref{44} and \eqref{46}, we get
\begin{align}
    \Laa^{\Delta_{K}}(\widecheck{\Xi}) &= \alpha_{\PP}^{-n_{\PP}}(\alpha_{\PPP}^{-n_{\PPP}}\cdot C_{a,b,\tilX}),\label{51}\\[6pt]
    \Lba^{\Delta_{K}}(\widecheck{\Xi}) &= \beta_{\PP}^{-n_{\PP}}(\alpha_{\PPP}^{-n_{\PPP}}\cdot C_{a,b,\tilX}), \label{52}
\end{align}
where $\widecheck{\Xi}=\tilX|_{\overline{\langle \gamma_{\PP} \rangle}\times \overline{\langle \gamma_{\PPP} \rangle}} = \tilX_{\PP}\cdot\tilX_{\PPP}$.
Denote $\alpha^{-n_{\PPP}}C_{a,b,\tilX}$ by $D$.
Using \eqref{ps2}, we can rewrite equations \eqref{51} and \eqref{52} as
\begin{align}
    \Laa^{\Delta_{K}(\tilX_{\PPP})}(\tilX_{\PP}) &= \alpha_{\PP}^{-n_{\PP}} D, \label{53}\\[6pt]
    \Lba^{\Delta_{K}(\tilX_{\PPP})}(\tilX_{\PP}) &= \beta_{\PP}^{-n_{\PP}}D. \label{54}
\end{align}
 Note that $\Laa^{{\Delta_{K}(\tilX_{\PPP})}} \in \HH_{E', v_{p}(\alpha_{\PP})}(\GG_{\PP})$ and $\Lba^{{\Delta_{K}(\tilX_{\PPP})}} \in \HH_{E', v_{p}(\beta_{\PP})}(\GG_{\PP})$, where $E'=E(\tilX_{\PPP}(\gamma_{\PPP}))$ is a finite field extension of $E$.

Let $$G_{1} = P_{4,\PP}\Laa^{\Delta_{K}(\tilX_{\PPP})} - P_{2,\PP}\Lba^{\Delta_{K}(\tilX_{\PPP})},$$ and 
$$G_{2} = -P_{3,\PP}\Laa^{\Delta_{K}(\tilX_{\PPP})} + P_{1,\PP}\Lba^{\Delta_{K}(\tilX_{\PPP})},$$
where $Q^{-1}_{\PP}\Mbarp = \begin{pmatrix}
    P_{1,\PP} & P_{2,\PP}\\[1em] P_{3,\PP} & P_{4,\PP}
\end{pmatrix}\in\ M_{2,2}(\HH_{E}(\GG_{\PP})).$

Then, from Theorem \ref{thm3.2}, we get $G_{1},G_{2} \in \HH_{E', k-1}(\GG_{\PP})$, and equations \eqref{53} and \eqref{54} imply $G_{1}(\tilX_{\PP})=G_{2}(\tilX_{\PP})=0$. 
Hence, from Theorem \ref{thm1}, we deduce that $\det(Q_{\PP}^{-1}\Mbarp)$ divides both $G_{1}$ and $G_{2}$ in $\HH_{E',k-1}(\GG_{\PP})$.

Thus,
\begin{equation}
     P_{4,\PP}(\widecheck{\Xi})\Laa^{\Delta_{K}}(\widecheck{\Xi}) - P_{2,\PP}(\widecheck{\Xi})\Laa^{\Delta_{K}}(\widecheck{\Xi})= -P_{3,\PP}(\widecheck{\Xi})\Laa^{\Delta_{K}}(\widecheck{\Xi}) + P_{1,\PP}(\widecheck{\Xi})\Lba^{\Delta_{K}}(\widecheck{\Xi}) = 0, \label{58}
\end{equation}
for any Hecke character $\Xi$ of infinity type $(0,0) \leq (a,b) \leq (k,k)$ and conductor $\PP^{n_{\PP}}\PPP^{n_{\PPP}}$.

Thus, for any Hecke characters $\omega_{1}\in S_{\PP}$, $\omega_{2}\in S_{\PPP}$ , \eqref{58} implies
\begin{equation*}
    (P_{4,\PP}\Laa^{\Delta_{K}} - P_{2,\PP}\Lba^{\Delta_{K}})(\widetilde{\omega_{1}}_{\PP}\widetilde{\omega_{2}}_{\PPP})=0,
\end{equation*}
which we rewrite as
\begin{equation}
    (P_{4,\PP}\Laa^{\Delta_{K}} - P_{2,\PP}\Lba^{\Delta_{K}})^{(\widetilde{\omega_{1}}_{\PP})}(\widetilde{\omega_{2}}_{\PPP})=0.
\end{equation}
Similarly.
\begin{equation}
    (-P_{3,\PP}\Laa^{\Delta_{K}} + P_{1,\PP}\Lba^{\Delta_{K}})^{(\widetilde{\omega_{1}}_{\PP})}(\widetilde{\omega_{2}}_{\PPP})=0.
\end{equation}
Hence, the distributions $(P_{4,\PP}\Laa^{\Delta_{K}} - P_{2,\PP}\Lba^{\Delta_{K}})^{(\widetilde{\omega_{1}}_{\PP})}$ and   $(-P_{3,\PP}\Laa^{\Delta_{K}} + P_{1,\PP}\Lba^{\Delta_{K}})^{(\widetilde{\omega_{1}}_{\PP})}$ vanish at all characters $\omega_{2}\in S_{\PPP}$. 
Moreover, these two distributions belong to $\HH_{E', v_{p}(\alpha_{\PPP})}(\GG_{\PPP})$ and $v_{p}(\alpha_{\PPP}) < k+1$. 
Therefore, using Lemma \ref{lem3.5}, we get
\begin{align*}
    (P_{4,\PP}\Laa^{\Delta_{K}} - P_{2,\PP}\Lba^{\Delta_{K}})^{(\widetilde{\omega_{1}}_{\PP})}&=0,\\[6pt]
    (-P_{3,\PP}\Laa^{\Delta_{K}} + P_{1,\PP}\Lba^{\Delta_{K}})^{(\widetilde{\omega_{1}}_{\PP})} &=0.
\end{align*}

Hence, from the proof of Theorem \ref{thm1}, we conclude that $\det(Q_{\PP}^{-1}\Mbarp)$ divide $(P_{4,\PP}\Laa^{\Delta_{K}} - P_{2,\PP}\Lba^{\Delta_{K}})$ and $(-P_{3,\PP}\Laa^{\Delta_{K}} + P_{1,\PP}\Lba^{\Delta_{K}})$ over the two-variable distribution algebra $\HH_{E,k-1, v_{p}(\alpha_{\PPP})}$. 
More precisely, $\det(Q_{\PP}^{-1}\Mbar)$ divides both distributions in the variable $\gamma_{\PP}-1$ while not touching $\gamma_{\PPP}-1$.

Write 
\begin{align*}
    \Lsa^{\Delta_{K}} &= \dfrac{P_{4,\PP}\Laa^{\Delta_{K}} - P_{2,\PP}\Lba^{\Delta_{K}}}{\det(Q_{\PP}^{-1}\Mbarp)}, \\[6pt]
    \Lfa^{\Delta_{K}} &= \dfrac{-P_{3,\PP}\Laa^{\Delta_{K}} + P_{1,\PP}\Lba^{\Delta_{K}}}{\det(Q_{\PP}^{-1}\Mbarp)}.
\end{align*}
Then, since $\det(Q^{-1}_{\PP}\Mbarp) \sim \log^{k-1}_{p}$, Theorem \ref{thm1} implies $\Lsa^{\Delta_{K}}$ and $\Lfa^{\Delta_{K}}$ are $O(1)$ in $\gamma_{\PP}-1$ and hence lie in $\HH_{E,0,v_{p}(\alpha_{\PPP})}$.

We then write
\begin{align*}
    \Lsa &= \dfrac{P_{4,\PP}\Laa - P_{2,\PP}\Lba}{\det(Q_{\PP}^{-1}\Mbarp)}, \\[6pt]
    \Lfa &= \dfrac{-P_{3,\PP}\Laa + P_{1,\PP}\Lba}{\det(Q_{\PP}^{-1}\Mbarp)}.
\end{align*}
Since $\det(Q_{\PP}^{-1}\Mbarp)$ divides each isotypic component of the two distributions in the numerators, $\Lsa$ and $\Lfa$ are elements in $\HH_{E,0,v_{p}(\alpha_{\PPP})}(G_{p^{\infty}})$.

The proof for 
\[
        \begin{pmatrix}
            \Lab \\[6pt]
            \Lbb
        \end{pmatrix}
        =Q^{-1}_{\PP}\Mbarp\begin{pmatrix}
            \Lsb \\[6pt]
            \Lfb
        \end{pmatrix}.
    \]
    is identical. 
\end{proof}

Recall $\HH_{E,r}\coloneqq \{ f(X)=\sum_{n\geq 0} a_{n}X^{n} \in E[[X]] \colon \sup_{n}(n^{-r}|a_{n}|_{p}) < \infty\}$. 
Then, we can identify $\HH_{E,r,s} \coloneqq \HH_{E,r}(\GG_{\PP})\widehat{\otimes}\HH_{E,s}(\GG_{\PPP})$ with $\HH_{E,r}\widehat{\otimes}\HH_{E,s}$ by identifying $X=\gamma_{\PP}-1$ and $Y=\gamma_{\PPP}-1$. 
We define the operator $\delp$ to be the partial derivative $\delx$.
The next proposition is a generalization of \cite[Lemma 2.4]{Lei3}. 

\begin{prop}
    \label{prop3.7}
    Let $\Xi$ be any character Hecke character of $K$ of the infinity type $(0,0)\leq (a,b)\leq (k,k)$ and conductor $\PP^{n_{\PP}}\PPP^{n_{\PPP}}$ with $n_{\PP}, n_{\PPP} > 0$. 
    Then, there exist constants $A_{a,b,\Xi}, B_{a,b,\Xi}\in \overline{\Qp}$ such that 
    $$\begin{matrix}
        \delp\Laa(\tilX) =\alpha_{\PPP}^{-n_{\PPP}}A_{a,b,\tilX}, & \delp\Lab(\tilX) = \beta_{\PPP}^{-n_{\PPP}}A_{a,b,\tilX}, \\[6pt]
        \delp\Lba(\tilX) = \alpha_{\PPP}^{-n_{\PPP}}B_{a,b,\tilX}, & \delp\Lbb(\tilX) = \beta_{\PPP}^{-n_{\PPP}}B_{a,b,\tilX}. 
    \end{matrix}$$
\end{prop}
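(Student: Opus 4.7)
The plan is to fix a Hecke character $\Xi$ as in the statement and reduce the four identities to a uniqueness argument for one-variable power series obtained by specializing the $\PPP$-variable. I would form the partial specializations $L_{\lambda_\PP,\mu_\PPP}^{(\tilX_\PPP)}(X) \in \HH_{E',v_{p}(\lambda_\PP)}(\GG_\PP)$ via \eqref{ps2}; Lemma \ref{lem3.1} together with \eqref{admeqn} controls their growth order. The interpolation formulas \eqref{44}--\eqref{47} then give, for every character $\tilX_\PP$ arising from a Hecke character of infinity type $(a,b)$ and conductor $\PP^{n_\PP}\PPP^{n_\PPP}$ with $n_\PP,n_\PPP\geq 1$,
\[
\alpha_\PPP^{n_\PPP} L_{\alpha_\PP,\alpha_\PPP}^{(\tilX_\PPP)}(\tilX_\PP) \;=\; \alpha_\PP^{-n_\PP} C_{a,b,\tilX} \;=\; \beta_\PPP^{n_\PPP} L_{\alpha_\PP,\beta_\PPP}^{(\tilX_\PPP)}(\tilX_\PP),
\]
with the analogous pair of identities holding with $\alpha_\PP$ replaced by $\beta_\PP$.

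Next I would invoke uniqueness. The characters $\tilX_\PP$ obtained by varying $a$ over $\{0,\ldots,k-2\}$ and the ramified part of $\tilX_\PP$ over all Dirichlet characters of conductor $p^{n_\PP}$ with $n_\PP \geq 1$ are exactly the characters $\chi^a \omega_\PP$ appearing in Lemma \ref{lem3.5}. Since both sides of the displayed identity lie in $\HH_{E',v_p(\alpha_\PP)}(\GG_\PP)$ with $v_p(\alpha_\PP) < k-1$, Lemma \ref{lem3.5} (after splitting into isotypical components of $\Delta$ as in the proof of Proposition \ref{prop3.6}) upgrades the pointwise identity to an equality of power series
\[
\alpha_\PPP^{n_\PPP} L_{\alpha_\PP,\alpha_\PPP}^{(\tilX_\PPP)} \;=\; \beta_\PPP^{n_\PPP} L_{\alpha_\PP,\beta_\PPP}^{(\tilX_\PPP)},
\]
together with the analogous identity for the $\beta_\PP$-row.

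To finish, I would apply $\delp = \partial/\partial X$ to these two identities. Because $\delp$ and the $\tilX_\PPP$-specialization act on independent variables, they commute, so evaluating the resulting equalities at $X = u^a\zeta_\PP - 1$ produces
\[
\alpha_\PPP^{n_\PPP}\delp L_{\alpha_\PP,\alpha_\PPP}(\tilX) = \beta_\PPP^{n_\PPP}\delp L_{\alpha_\PP,\beta_\PPP}(\tilX), \qquad \alpha_\PPP^{n_\PPP}\delp L_{\beta_\PP,\alpha_\PPP}(\tilX) = \beta_\PPP^{n_\PPP}\delp L_{\beta_\PP,\beta_\PPP}(\tilX).
\]
Setting $A_{a,b,\tilX} := \alpha_\PPP^{n_\PPP}\delp L_{\alpha_\PP,\alpha_\PPP}(\tilX)$ and $B_{a,b,\tilX} := \alpha_\PPP^{n_\PPP}\delp L_{\beta_\PP,\alpha_\PPP}(\tilX)$ then yields the four claimed equalities.

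The main obstacle is purely bookkeeping rather than conceptual: one must verify that after the $\tilX_\PPP$-specialization and passage to isotypical components of $\Delta$, each side still has growth order $v_p(\alpha_\PP) < k-1$ (respectively $v_p(\beta_\PP) < k-1$ for the $\beta_\PP$-row), so that Lemma \ref{lem3.5} genuinely applies. This follows directly from Lemma \ref{lem3.1} and the admissibility bound \eqref{admeqn}, exactly as at the start of the proof of Proposition \ref{prop3.6}.
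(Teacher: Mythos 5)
Your proposal is correct and follows essentially the same route as the paper's proof: specialize the $\PPP$-variable at $\tilX_\PPP$, use the interpolation formulas \eqref{44}--\eqref{47} plus Lemma~\ref{lem3.5} (on isotypical components, using $v_p(\alpha_\PP), v_p(\beta_\PP) < k-1$) to promote the pointwise equalities $\alpha_{\PPP}^{n_\PPP}L^{(\tilX_\PPP)}_{\alpha_\PP,\ast}=\beta_\PPP^{n_\PPP}L^{(\tilX_\PPP)}_{\beta_\PP,\ast}$ type to identities of one-variable power series, then differentiate in $X=\gamma_\PP-1$ and exploit that $\delp$ commutes with the $\PPP$-specialization. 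The only cosmetic difference is that the paper performs the isotypical decomposition of $\Delta$ before forming the partial specialization, whereas you defer it to the Lemma~\ref{lem3.5} step; both orderings are legitimate since specialization commutes with the idempotents $e_\eta$.
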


\begin{proof}
    We will only show that
    \begin{align*}
        \alpha_{\PPP}^{n_{\PPP}}\delp\Laa(\tilX)&=\beta_{\PPP}^{n_{\PPP}}\delp\Lab(\tilX),
    \end{align*}
    for any Hecke character $\Xi$ of  of the infinity type $(0,0)\leq (a,b)\leq (k,k)$ and conductor $\PP^{n_{\PP}}\PPP^{n_{\PPP}}$ with $n_{\PP}, n_{\PPP} > 0$. 

    Fix a Hecke character $\omega_{1} \in S_{\PP}$.
    Then, for any Hecke character $\omega_{2} \in S_{\PPP}$, \eqref{44} and \eqref{45} imply
    \begin{equation}
        \alpha_{\PPP}^{n_{\PPP}}\Laa^{\Delta_{K}}(\widetilde{\omega_{1,\PP}}\widetilde{\omega_{2,\PPP}}) = \beta_{\PPP}^{n_{\PPP}}\Lab^{\Delta_{K}}(\widetilde{\omega_{1,\PP}}\widetilde{\omega_{2,\PPP}}), \label{68}
    \end{equation}
    where $\Laa^{\Delta_{K}}, \Lab^{\Delta_{K}}$ are isotypic components of $\Laa,\Lab$ respectively with respect to the trivial character of $\Delta_{K}$.
    Using \eqref{ps2}, we rewrite \eqref{68} as
    \begin{equation}
        \alpha_{\PPP}^{n_{\PPP}}\Laa^{\Delta_{K}(\widetilde{\omega_{2,\PPP}})}(\widetilde{\omega_{1,\PP}})= \beta_{\PPP}^{n_{\PPP}}\Lab^{\Delta_{K}(\widetilde{\omega_{2,\PPP}})}(\widetilde{\omega_{1,\PP}}). \label{69}
    \end{equation}
    From Lemma \ref{lem3.1}, we know that $\Laa^{\Delta_{K}(\widetilde{\omega_{2,\PPP}})}, \Lab^{\Delta_{K}(\widetilde{\omega_{2,\PPP}})} \in \HH_{E', v_{p}(\alpha_{\PP})}(\GG_{\PP})$ for some extension $E'$ of $E$.
    As $v_{p}(\alpha_{\PP}) < k+1$, using Lemma \ref{lem3.5} we have
    \begin{equation*} \alpha_{\PPP}^{n_{\PPP}}\Laa^{\Delta_{K}(\widetilde{\omega_{2,\PPP}})}=\beta_{\PPP}^{n_{\PPP}}\Lab^{\Delta_{K}(\widetilde{\omega_{2,\PPP}})}. 
    \end{equation*}

    Hence, their partial derivatives also agree, i.e.
    \begin{equation}
        \alpha_{\PPP}^{n_{\PPP}}\delp\Laa^{\Delta_{K}(\widetilde{\omega_{2,\PPP}})} = \beta_{\PPP}^{n_{\PPP}}\delp\Lab^{\Delta_{K}(\widetilde{\omega_{2,\PPP}})}. \label{70}
    \end{equation}
    But, for any power series $F\in \HH_{E, v_{p}(\alpha_{\PP}),s}$,
    $$\delp(F^{(\widetilde{\omega_{2,\PPP}})})(\widetilde{\omega_{1,\PP}}) = (\delp F)(\widetilde{\omega_{1,\PP}}\widetilde{\omega_{2,\PPP}}),$$
    for all Hecke characters  $\omega_{1} \in S_{\PP}$.
    We have this equality since we are partially differentiating with respect to the variable $\gamma_{\PP}-1$. 
    More precisely: $F^{(\widetilde{\omega_{2,\PPP}})}\in\HH_{E', v_{p}(a_{\PP})}(\GG_\PP)$.
    Thus $\delp(F^{(\widetilde{\omega_{2,\PPP}})})(\widetilde{\omega_{1,\PP}})$ is first differentiating and then evaluating at the character $\widetilde{\omega_{1,\PP}}$. 
    On the other hand, $\delp(F)(\widetilde{\omega_{1,\PP}}\widetilde{\omega_{2,\PPP}})$ is first partially differentiating and then evaluating at the character $\widetilde{\omega_{1,\PP}}\widetilde{\omega_{2,\PPP}}$.

    Thus, for any Hecke character $\Xi$,
    \begin{equation}
        \alpha_{\PPP}^{n_{\PPP}}\delp\Laa^{\Delta_{K}}(\widecheck{\Xi})= \beta_{\PPP}^{n_{\PPP}}\delp\Lab^{\Delta_{K}}(\widecheck{\Xi}), \label{71}
    \end{equation}
    Since equation \eqref{71} is true for any isotypic component, we have
    \begin{equation}
        \alpha_{\PPP}^{n_{\PPP}}\delp\Laa(\tilX)= \beta_{\PPP}^{n_{\PPP}}\delp\Lab(\tilX). \label{72}
    \end{equation}
\end{proof}

\begin{prop}
    \label{prop3.8}
    There exist $\Lss,\Lsf,\Lfs,\Lff \in \Lambda_{E}(G_{p^{\infty}})$ such that
    \begin{equation*}
        \begin{pmatrix}
            \Lsa & \Lfa\\[6pt]
            \Lsb & \Lfb
        \end{pmatrix} =
        (Q_{\PPP}^{-1}\Mbarpp)\begin{pmatrix} \Lss & \Lfs \\[6pt] \Lsf & \Lff \end{pmatrix}.
    \end{equation*}
\end{prop}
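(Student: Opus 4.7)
The plan is to adapt the argument of Proposition \ref{prop3.6}, but now factoring through the $\PPP$-variable rather than the $\PP$-variable. It will suffice to treat the two columns of the matrix identity separately: I will look for $\Lss, \Lsf \in \Lambda_{E}(G_{p^\infty})$ realising $\begin{pmatrix}\Lsa \\ \Lsb\end{pmatrix} = Q_{\PPP}^{-1}\Mbarpp\begin{pmatrix}\Lss\\ \Lsf\end{pmatrix}$, together with $\Lfs, \Lff$ satisfying the analogous identity for $\Lfa, \Lfb$. Once the appropriate one-variable interpolation is verified on slices, Theorem \ref{thm1}, transported to $\GG_{\PPP}$ via the isomorphism $\tau_\PPP$ of Section 6, will produce each pair.

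First I would establish that for every Hecke character $\Xi$ of infinity type $(0,0) \leq (a,b) \leq (k-2,k-2)$ and conductor $\PP^{n_\PP}\PPP^{n_\PPP}$ with $n_\PP, n_\PPP \geq 1$, there exist constants $D^{(1)}_\tilX, D^{(2)}_\tilX \in \overline{\Qp}$, independent of $\alpha_\PPP$ and $\beta_\PPP$, such that
\begin{align*}
\Lsa(\tilX) &= \alpha_\PPP^{-n_\PPP} D^{(1)}_\tilX, & \Lsb(\tilX) &= \beta_\PPP^{-n_\PPP} D^{(1)}_\tilX,\\
\Lfa(\tilX) &= \alpha_\PPP^{-n_\PPP} D^{(2)}_\tilX, & \Lfb(\tilX) &= \beta_\PPP^{-n_\PPP} D^{(2)}_\tilX.
\end{align*}
Whenever $\det(Q_\PP^{-1}\Mbarp)(\tilX_\PP) \neq 0$ (which in the relevant range happens precisely when $n_\PP = 1$), this will follow immediately by substituting \eqref{44}--\eqref{47} into the defining quotients for $\Lsa, \Lsb, \Lfa, \Lfb$ from Proposition \ref{prop3.6}. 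For $n_\PP \geq 2$ both numerator and denominator of those quotients vanish at $\tilX_\PP$, and the value must instead be recovered by a limit computation along the $\gamma_\PP$-direction. Proposition \ref{prop3.7} is indispensable here: its interpolation of $\delp\Laa, \delp\Lab, \delp\Lba, \delp\Lbb$ as explicit multiples of $\alpha_\PPP^{-n_\PPP}$ or $\beta_\PPP^{-n_\PPP}$ ensures that the constants one extracts remain independent of the $\PPP$-roots.

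Next I would fix an isotypical character $\eta$ of $\Delta$ and a Hecke character $\omega_1 \in S_\PP$. By Lemma \ref{lem3.1}, the $\PPP$-slices $\Lsa^{\eta(\widetilde{\omega_{1,\PP}})}$ and $\Lsb^{\eta(\widetilde{\omega_{1,\PP}})}$ lie in $\HH_{E', v_p(\alpha_\PPP)}(\GG_\PPP)$ and $\HH_{E', v_p(\beta_\PPP)}(\GG_\PPP)$ respectively, for a suitable finite extension $E'/E$; the previous paragraph combined with $\tau_\PPP$ will show these slices satisfy the hypothesis of Theorem \ref{thm1} with $(\alpha,\beta) = (\alpha_\PPP, \beta_\PPP)$. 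Theorem \ref{thm1} will then produce bounded distributions $L^{\eta(\widetilde{\omega_{1,\PP}})}_{\sharp, \sharp}, L^{\eta(\widetilde{\omega_{1,\PP}})}_{\sharp, \flat} \in \Lambda_{E'}(\GG_\PPP)$ realising the one-variable factorization; in particular, $\det(Q_\PPP^{-1}\Mbarpp)$ will divide $P_{4,\PPP}\Lsa^{\eta(\widetilde{\omega_{1,\PP}})} - P_{2,\PPP}\Lsb^{\eta(\widetilde{\omega_{1,\PP}})}$ in $\HH_{E'}(\GG_\PPP)$, and the identical argument applied to $(\Lfa, \Lfb)$ will yield the analogous divisibility for the $\flat$-column.

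Finally, I would set
\[ \Lss \coloneqq \frac{P_{4,\PPP}\Lsa - P_{2,\PPP}\Lsb}{\det(Q_\PPP^{-1}\Mbarpp)}, \quad \Lsf \coloneqq \frac{-P_{3,\PPP}\Lsa + P_{1,\PPP}\Lsb}{\det(Q_\PPP^{-1}\Mbarpp)}, \]
and $\Lfs, \Lff$ analogously from $(\Lfa, \Lfb)$. The numerators sit a priori in $\HH_{E,0,k-1}(G_{p^\infty})$, while the slice-wise divisibility obtained above, valid for every $\eta$ and every $\omega_1 \in S_\PP$, will lift by a density argument (the collection of such slices is Zariski-dense among characters of $\overline{\langle\gamma_\PP\rangle}$) to divisibility in the two-variable distribution algebra. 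Since $\det(Q_\PPP^{-1}\Mbarpp) \sim \log_p^{k-1}$ in the $\PPP$-variable by Lemma \ref{lem1.16}, the quotients will land in $\HH_{E,0,0}(G_{p^\infty}) = \Lambda_E(G_{p^\infty})$, and the asserted matrix identity will follow by expanding $(Q_\PPP^{-1}\Mbarpp)^{-1}$. The hard part will be the limit computation at characters with $n_\PP \geq 2$: executing it rigorously and verifying that the emerging constants are genuinely independent of $\alpha_\PPP$ and $\beta_\PPP$ is precisely what compels the invocation of Proposition \ref{prop3.7}.
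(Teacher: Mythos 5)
Your proposal is correct and follows essentially the same route as the paper: establish the interpolation property $\Lsa(\tilX) = \alpha_\PPP^{-n_\PPP}D^{(1)}_\tilX$, $\Lsb(\tilX) = \beta_\PPP^{-n_\PPP}D^{(1)}_\tilX$ (and the analogues for $\flat$) via the $\partial_{\PP}$-derivative of the defining relation $L_{\sharp,*}\det(Q^{-1}_\PP\Mbarp)= P_{4,\PP}L_{\alpha_{\PP},*} - P_{2,\PP}L_{\beta_{\PP},*}$ together with Proposition \ref{prop3.7}, then slice at $\omega_1 \in S_\PP$, run the one-variable factorization (Theorem \ref{thm1}) in the $\PPP$-direction, and lift via the proof of Proposition \ref{prop3.6} with the roles of $\PP$ and $\PPP$ interchanged. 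Two small remarks: the separate treatment of $n_\PP = 1$ is unnecessary, since $S_\PP$ excludes that case and Theorem \ref{thm1} only requires the interpolation at the zero locus of $\det(Q^{-1}\Mbar)$ (conductor $> p$); and the ``Zariski-density'' lifting should be justified, as in the paper, via Lemma \ref{lem3.5}, which is exactly the statement that a distribution of sufficiently small growth vanishing at all those slice-characters is identically zero.
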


\begin{proof}
    The proof is similar to the proof of \cite[Proposition 2.5]{Lei3}. 
    We will prove that
    $$\begin{pmatrix}
    \Lsa \\[6pt] \Lsb
    \end{pmatrix} = (Q^{-1}_{\PPP}\Mbarpp)\begin{pmatrix} \Lss \\[6pt] \Lsf\end{pmatrix}.$$
    The proof for the other set of power series is similar.

Let $\omega_{1}\in S_{\PP}$ and $\omega_{2}\in S_{\PPP}$ and $\Xi=\omega_{1}.\omega_{2}$.  
    Recall, from the proof of Proposition \ref{prop3.6}, for $*\in\{\alpha_{\PPP},\beta_{\PPP}\}$, we have
    \begin{equation}
        L_{\sharp,*}= \dfrac{P_{4,\PP}L_{\alpha_{\PP},*} - P_{2,\PP}L_{\beta_{\PP},*}}{\det(Q^{-1}_{\PP}\Mbarp)}.
    \end{equation}
Thus,
\begin{equation}
    L_{\sharp,*}\det(Q^{-1}_{\PP}\Mbarp)= P_{4,\PP}L_{\alpha_{\PP},*} - P_{2,\PP}L_{\beta_{\PP},*}.\label{74}
\end{equation}

From Theorem \ref{thm3.2}, we know that $\det(\Mbarp)$ is equal to, upto a unit in $\Lambda_{E}(\GG_{\PP})$, $\dfrac{\log_{p,k-1}(\gamma_{\PP})}{\delta_{k-1}(\gamma_{\PP}-1)}$.
Thus, the zeros of of $\det(Q^{-1}_{\PP}\Mbarp)$ are of type $u^{j}\zeta-1$, where $\zeta$ is a $p^{n}$-th root of unity, for all $0\leq j\leq k$ and $n\geq 1$.
Therefore, by the definition of $\widetilde{\omega_{1,\PP}}$, we have $\det(Q^{-1}_{\PP}\Mbarp)(\widetilde{\omega_{1,\PP}})=0$.
Furthermore, since all the zeros of $\det(Q^{-1}_{\PP}\Mbarp)$ are simple, we conclude $(\delp\det(Q^{-1}_{\PP}\Mbarp))(\widetilde{\omega_{1,\PP}})\neq 0$.

For the rest of the proof, we will use isotypic components corresponding to the trivial character of $\Delta_{K}$. 
From \eqref{74}, we get,
\begin{equation*}
\delp L^{\Delta_{K}}_{\sharp,*}\det(Q^{-1}_{\PP}\Mbarp)+L^{\Delta_{K}}_{\sharp,*}\delp\det(Q^{-1}_{\PP}\Mbarp)=\delp P_{4,\PP}L^{\Delta_{K}}_{\alpha_{\PP},*}+P_{4,\PP}\delp L^{\Delta_{K}}_{\alpha_{\PP},*}-(\delp P_{2,\PP}L^{\Delta_{K}}_{\beta_{\PP},*}+P_{2,\PP}\delp L^{\Delta_{K}}_{\beta_{\PP},*}).\label{75}       
\end{equation*}
We evaluate the above equation at $\widecheck{\Xi}=\tilX|_{\overline{\langle \gamma_{\PP}\rangle} \times \overline{\langle \gamma_{\PPP}\rangle}}$, where $\Xi=\omega_{1}\omega_{2}$ and apply Proposition \ref{prop3.7} together with the equations \eqref{44} to \eqref{47}  to get
\begin{equation}
    L^{\Delta_{K}}_{\sharp,*}(\widecheck{\Xi}).(\delp\det(Q^{-1}_{\PP}\Mbarp))(\widecheck{\Xi}) = (*)^{-n_{\PPP}}\cdot M_{\tilX},
\end{equation}
where $M_{\tilX}$ is the constant
$$(\delp P_{4,\PP})(\widecheck{\Xi})(\alpha_{\PP}^{-n_{\PP}}C_{a,b,\tilX}) + P_{4,\PP}(\widecheck{\Xi})A_{a,b,\tilX} - (\delp P_{2,\PP})(\widecheck{\Xi})(\beta_{\PP}^{-n_{\PP}}C_{a,b,\tilX}) - P_{2,\PP}(\widecheck{\Xi})B_{a,b,\tilX}.$$
In other words, we have
\begin{align*}
    \Lsa^{\Delta_{K}}(\widecheck{\Xi}) &= \alpha_{\PPP}^{-n_{\PPP}}\dfrac{M_{\tilX}}{(\delp\det(Q^{-1}_{\PP}\Mbarp))(\widecheck{\Xi})}, \\[6pt]
    \Lsb^{\Delta_{K}}(\widecheck{\Xi}) &= \beta_{\PPP}^{-n_{\PPP}}\dfrac{M_{\tilX}}{(\delp\det(Q^{-1}_{\PP}\Mbarp))(\widecheck{\Xi})}.
\end{align*}

Since $\Xi=\omega_{1}\omega_{2}$, we can rewrite the above equations as
\begin{align*}
    \Lsa^{\Delta_{K}(\widetilde{\omega_{1,\PP}})}(\widetilde{\omega_{2,\PPP}}) &= \alpha_{\PPP}^{-n_{\PPP}}\dfrac{M_{\tilX}}{(\delp\det(Q^{-1}_{\PP}\Mbarp))(\widecheck{\Xi})} , \\[6pt]
    \Lsb^{\Delta_{K}(\widetilde{\omega_{1,\PP}})}(\widetilde{\omega_{2,\PPP}}) &= \beta_{\PPP}^{-n_{\PPP}}\dfrac{M_{\tilX}}{(\delp\det(Q^{-1}_{\PP}\Mbarp)).(\widecheck{\Xi})}.
\end{align*}

Thus, after using Theorem \ref{thm1} and the proof of Proposition \ref{prop3.6} (we need to change $\PP$ with $\PPP$), we deduce the desired result.
\end{proof}

\begin{proof}[Proof of Theorem \ref{mainthm2}]
   Combining the factorizations obtained in Propositions \ref{prop3.6} and \ref{prop3.8}, we deduce the result.
\end{proof}


\appendix
\section{Signed $p$-adic $L$-functions for non-parallel weight Bianchi modular forms}
\label{appA}

In this appendix, we extend the results from parallel weight cuspidal Bianchi modular forms to non-parallel weight $C$-cuspidal Bianchi modular forms. 
The notion of $C$-cuspidality is related with the vanishing of the constant term of Fourier expansions of Bianchi modular forms (with level divisible by $p$) at suitable cusps.
For the definitions and proper explanations about $C$-\emph{cuspidality}, see \cite[Section 2]{LPS2}.
Note that the space of cuspidal Bianchi modular forms with level at $p$ is a proper subset of the set of $C$-cuspidal Bianchi modular forms with level at $p$.

Let us fix some notations first.
Fix an odd prime $p$.
Let $K/\QQ$ be a quadratic imaginary field and $p$ splits in $K$ as $p\OO_{K}=\PP\PPP$.
We also assume $p$ does not divide the class number $h_K$ of $K$.
For $\qq\in\pset$, let $K_{\qq}\cong\Qp$ be the completion of $K$ at the prime $\qq$.
Let $\OO_{K_{\qq}}$ be the ring of integers of $K_{\qqq}$ and let $\varpi_{\qq}$ be its uniformizer.
We fix the embeddings $\iota_{\infty}:\overline{\QQ}\hookrightarrow\CC$ and $\iota_{p}:\overline{\QQ}\hookrightarrow\overline{\Qp}$.
Note that $\iota_p$ fixes a $p$-adic valuation $v_p$ on $\overline{\Qp}$.
Hence, we choose $\iota_p$ such that $v_{p}(\varpi_{\PP})=1$ and $v_{p}(\varpi_{\PPP})=0$.
The embeddings $\iota_{\infty}$ and $\iota_p$ will give the isomorphism $\iota:\CC\xrightarrow{\cong}\overline{\Qp}$ satisfying $\iota\circ\iota_{\infty}=\iota_p$.
Fix non-negative integers $k$ and $\ell$.
\subsection{$p$-adic $L$-functions associated to $C$-cuspidal Bianchi modular forms}
Let $\FF$ be a $C$-cuspidal Bianchi eigenform of weight $(k,\ell)$ and level $\frkn$, where $p$ divides $\frkn$. 
For the Fourier expansion related to any Bianchi modular form, see \cite[Section 2.3]{LPS2}.
For $\qq\in\pset$, let $a_{\qq}$ be the $U_\qq$-eigenvalue of $\FF$. Moreover, $\FF$ is \emph{small slope}, i.e., $v_{p}(a_{\PP})< k+1$ and $v_{p}(a_{\PPP})<\ell+1$.

Let $c(\cdot,\FF)$ denote the Fourier coefficients of $\FF$. Like in the cuspidal case, for any Hecke character $\Xi$ with the conductor $\mathfrak{f}$, we define the \emph{$L$-function of $\FF$ twisted by $\Xi$}:
\[L(\FF,\Xi,s)=\sum_{\substack{0\neq\mathfrak{a}\subset\OO_F,\\ (\mathfrak{a},\mathfrak{f})=1}} c(\mathfrak{a}, \FF)\Xi(\mathfrak{a})\mathrm{N}(\mathfrak{a})^{-s},\]
where $s\in\CC$.

Using Deligne's $\GG$-factors, we \emph{renormalize} this $L$-function:
\[\Lambda(\FF,\Xi,s)=\dfrac{\Gamma(q+s)\GG(r+s)}{(2\pi i)^{q+s}(2\pi i)^{r+s}}L(\FF,\Xi,s),\] where $(q,r)$ is the infinity type of $\Xi$.

The main theorem of \cite{LPS2} is:
\begin{thm}{\cite[Theorem 4.12]{LPS2}}
\label{thmA1}
For chosen embeddings $\iota_{\infty},\iota_p$, and $\iota$, there exists a locally analytic distribution $L^{\iota}_{p,\FF}$ on the ray class group $G_{p^\infty}$ such that for any Hecke character $\Xi$ of $K$ of conductor $\PP^{n_{\PP}}\PPP^{n_{\PPP}}$ and infinity type $(0,0)\leq (q,r) \leq (k,\ell),$ we have
\begin{equation}
    \label{padccusp}
    L^{\iota}_{p,\FF}(\tilX)=(\text{explicit factor})\times\dfrac{1}{\lambda_{\mathfrak{f}}}\times\Lambda(\FF,\Xi, 1),
\end{equation}
where $\lambda_{\mathfrak{f}}$ is the $U_{\mathfrak{f}}=\prod_{\qq\mid p} U_{\qq}^{n_{\qq}}$-eigenvalue of $\FF$.\\The distribution $L^{\iota}_{p,\FF}$ is $(v_{p}(a_{\qq}))_{\qq\mid p}$-admissible and therefore is unique.
\end{thm}
From the Remark \ref{rem7.7}, we can see $L^{\iota}_{p,\FF}\in \HH_{E, v_{p}(a_{\PP}), v_{p}(a_{\PPP})}(G_{p^{\infty}}),$ for some finite extension $E/\Qp$.

\subsection{Decomposition of $p$-adic $L$-functions of $C$-cuspidal Bianchi modular form}
We first fix a Bianchi modular eigenform $\FF$ of weight $(k,\ell)$ and level $\frkm$, where $\frkm$ is coprime with $p$.
Furthermore, we assume that $\FF$ vanishes at cusps $0$ and $\infty$.

Consider the $L$-function of $\FF$:
\[L(\FF,s)=\sum_{0\neq \mathfrak{a}\subset\OO_K} c(\mathfrak{a},\FF)\mathrm{N}(\mathfrak{a})^{-s},\]
then the \emph{local Euler factor} at $\qq\in\pset$ is
\begin{equation}
    \label{localfact}
    L_{\qq}(\FF,s)^{-1}= 1- a_{\qq}p^{-s} +\varepsilon_{\FF}(\varpi_{\qq})p^{1-2s},
\end{equation}
where $a_{\qq}$ is the $T_{\qq}$-eigenvalue of $\FF$, and $\varepsilon_{\FF}$ is the central character associated to $\FF$. Recall that $\varepsilon_{\FF}$ is a Hecke character of the infinity type $(-k,-\ell)$, and conductor coprime with $p$.

Hence, we will consider the following Hecke polynomials: for the prime ideal $\PP$, we have
\begin{align}
    P_{\PP}(X)&\coloneqq X^{2}-a_{\PP}X + \varepsilon(\varpi_{\PP})p,\\ &= X^{2}-a_{\PP}X + \varepsilon_{\infty}(\varpi_{\PP})^{-1}p,  \\
    &= X^{2}-a_{\PP}X + \varpi_{p}^{k}\cdot\varpi_{\PP}^{\ell}\cdot p,\\
    &= X^{2}-a_{\PP}X + \varpi_{\PP}^{k+1}\varpi_{\PPP}^{\ell+1}.
\end{align}
Similarly, for prime $\PPP$, we consider
\begin{equation}
    P_{\PPP}(X)\coloneqq X^{2}-a_{\PPP}X +\varpi_{\PPP}^{k+1}\varpi_{\PP}^{\ell+1}.
\end{equation}

From now onwards, we assume $\FF$ is non-ordinary at both the primes $\PP$ and $\PPP$, i.e., $v_{p}(a_{\PP}), v_{p}(a_{\PPP})>0$. 
We furthermore assume 
\begin{enumerate}
    \item $v_{p}(a_{\PP}) > \left\lfloor \dfrac{k}{p-1}\right\rfloor$;\vspace{2.5mm}
    \item $v_{p}(a_{\PPP}) > \left\lfloor \dfrac{\ell}{p-1} \right\rfloor$.
\end{enumerate}

For $\qq\in\pset$, let $\alpha_{\qq}$ and $\beta_{\qq}$ be the roots of $P_{\qq}(X)$.
Recall that we have a $p$-adic valuation $v_p$ corresponding to the fixed embedding $\iota_p$ such that $v_{p}(\varpi_{\PP})=1$ and $v_{p}(\varpi_{\PPP})=0$. 
Hence, the roots $\alpha_{\PP}$ and $\beta_{\PP}$ satisfy:
\begin{align*}
    v_{p}(\alpha_{\PP}\beta_{\PP}) &=v_{p}(\varpi_{\PP}^{k+1}\varpi_{\PPP}^{\ell+1}),\\
    v_{p}(\alpha_{\PP})+v_{p}(\beta_{\PP})&=v_{p}(\varpi_{\PP}^{k+1})+v_{p}(\varpi_{\PPP}^{\ell+1}),\\
    &= k+1,
\end{align*}
and therefore $$0 < v_{p}(\alpha_{\PP}), v_{p}(\beta_{\PP})< k+1.$$
Similarly,for the roots $\alpha_{\PPP}$ and $\beta_{\PPP}$, we have 
\[v_{p}(\alpha_{\PPP})+ v_{p}(\beta_{\PPP}) =\ell+1,\]
and 
\[0 < v_{p}(\alpha_{\PPP}),  v_{p}(\beta_{\PPP}) < \ell+1. \] 
We assume $\alpha_{\qq}\neq\beta_{\qq}$ for $\qq\in\pset$.

Like in the cuspidal case, we have four $p$-stabilizations of $\FF: \FF^{\alpha_{\PP},\alpha_{\PPP}}, \FF^{\alpha_{\PP},\beta_{\PPP}}, \FF^{\beta_{\PP},\alpha_{\PPP}}, \FF^{\beta_{\PP},\beta_{\PPP}}$.
Note that we have assumed $\FF$ vanishes at the cusps $0$ and $\infty$.

\begin{lem}
    For $*\in\{\alpha_{\PP},\beta_{\PP}\}$ and $\dagger\in\{\alpha_{\PP},\beta_{\PP}\}$, the $p$-stabilization $\FF^{*,\dagger}$ is a $C$-cuspidal Bianchi modular form of weight $(k,\ell)$ and level $p\frkm$.
\end{lem}
\begin{proof}
    The proof is similar to the proof of \cite[Proposition 5.3]{LPS2}.
\end{proof}

Therefore, for $*\in\{\alpha_{\PP}, \beta_{\PP}\}$ and $\dagger\in\{\alpha_{\PPP}, \beta_{\PPP}\}$, the $p$-stabilizations $\FF^{*,\dagger}$ are $C$-cuspidal, of level $p\frkm$, and are of \emph{small slope}, since $v_{p}(*)< k+1$ and $v_{p}(\dagger)< \ell+1$. 
Hence, by Theorem \ref{thmA1}, we can attach a $p$-adic $L$-function $L^{\iota}_{*,\dagger}\coloneqq L^{\iota}_{p,\FF^{*,\dagger}}\in\HH_{E, v_{p}(*), v_{p}(\dagger)}(G_{p^\infty})$ to the $C$-cuspidal Bianchi modular form $\FF^{*,\dagger}$. 
Furthermore, for any Hecke character $\Xi$ of the infinity type $(q,r)$ such that $0\leq q \leq k$ and $0 \leq r\leq \ell$ of conductor $\PP^{n_{\PP}}\PPP^{n_{\PPP}}$, where $n_{\PP}, n_{\PPP}\in \ZZ_{>0}$, we have the following interpolation properties:
\begin{align*}
    L^{\iota}_{\alpha_{\PP},\alpha_{\PPP}}(\tilX)&=\alpha_{\PP}^{-n_{\PP}}\alpha_{\PPP}^{-n_{\PPP}}\cdot C_{q,r,\tilX},\\
    L^{\iota}_{\alpha_{\PP},\beta_{\PPP}}(\tilX)&=\alpha_{\PP}^{-n_{\PP}}\beta_{\PPP}^{-n_{\PPP}}\cdot C_{q,r,\tilX},\\
    L^{\iota}_{\beta_{\PP},\alpha_{\PPP}}(\tilX)&=\beta_{\PP}^{-n_{\PP}}\alpha_{\PPP}^{-n_{\PPP}}\cdot C_{q,r,\tilX},\\
    L^{\iota}_{\beta_{\PP},\beta_{\PPP}}(\tilX)&=\beta_{\PP}^{-n_{\PP}}\beta_{\PPP}^{-n_{\PPP}}\cdot C_{q,r,\tilX},
\end{align*}
where $C_{q,r,\tilX}\in\overline{\Qp}$ is a constant independent of $\alpha_{\PP},\beta_{\PP}, \alpha_{\PPP},\beta_{\PPP}$. 
More precisely, $C_{q,r,\tilX}=\text{(some explicit factor)}\times\Lambda(\FF, \Xi, 1)$, since the conductor of the Hecke character $\Xi$ is $\PP^{n_{\PP}}\PPP^{n_{\PPP}}$ with $n_{\PP}, n_{\PPP}\in\ZZ_{>0}$, and hence $L(\FF,\Xi, 1)=L(\FF^{*,\dagger}, \Xi,1)$ for all $*\in\{\alpha_{\PP},\beta_{\PP}\}$ and $\dagger\in\{\alpha_{\PPP},\beta_{\PPP}\}$.

Note that, since $v_{p}(\varpi_{\PP})=1$ and $v_{p}(\varpi_{\PPP})=0$, we can write $\varpi_{\PP}^{k+1}\varpi^{\ell+1}=p^{k+1}\cdot u_{\PP},$ where $u_\PP$ is a suitable unit in some ring of integers of finite extension of $\Qp$. 
Similarly, we can write $\varpi_{\PP}^{\ell+1}\varpi_{\PPP}^{k+1}=p^{\ell+1}\cdot v_{\PP}$.
Let $E/\Qp$ be a finite extension of $\Qp$ large enough to contain all Hecke eigenvalues of $\FF$, $\alpha_{\PP},\beta_{\PP},\alpha_{\PPP},\beta_{\PPP}, u^{1/2}_{\PP},$ and $v^{1/2}_{\PP}$.

We construct logarithmic matrices using the methods from Sections \ref{sec3},\ref{sec4}, and \ref{sec5}.

For the prime $\PP$, let 
\begin{align*}
    A_{\vp,\PP}&=\begin{pmatrix}
        0 & \dfrac{-1}{p^{k+1}u_{\PP}} \\[12pt]  1 & \dfrac{a_{\PP}}{p^{k+1}u_\PP},
    \end{pmatrix},\\[10pt]
    Q_{\PP}&= \begin{pmatrix} \alpha_{\PP} & -\beta_{\PP} \\[9pt]
-p^{k+1}u_{\PP} & p^{k+1}u_{\PP} \end{pmatrix}.
\end{align*}
Using this data, we construct a logarithmic matrix $\underline{M^{k}(\PP)}\in M_{2,2}(\HH_{E}(\GG_1))$ such that it satisfies:
\begin{enumerate}
    \item If $Q_{\PP}^{-1}(\underline{M^{k}(\PP)})=\begin{pmatrix}
            P_{1}(\PP) & P_{2}(\PP) \\
            P_{3}(\PP) & P_{4}(\PP)
        \end{pmatrix},$
        then $P_{1}(\PP), P_{2}(\PP)\in \HH_{E,v_{p}(\alpha_{\PP})}(\GG_1)$ and $P_{3}(\PP), P_{4}(\PP)\in \HH_{E,v_{p}(\beta_{\PP})}(\GG_1)$.
        \item The second row of $A_{\varphi, \PP}^{-n}\underline{M^{k}(\PP)}$ is divisible by $\Phi_{n-1,k+1}(\gamma_{0})$ over $\HH_{E}(\GG_1)$.
        \item The determinant $\det(\underline{M^{k}(\PP)})$ is $\dfrac{\log_{p,k+1}(\gamma_{0})}{\delta_{k+1}(\gamma_{0}-1)}$, up to a unit in $\Lambda_{E}(\GG_1)$. 
    \end{enumerate}
Define $\underline{M^{k}_{\PP}}\coloneqq\mat_{\PP}(\underline{M^{k}(\PP)}) \in M_{2,2}(\HH_{E}(\GG_\PP))$.

For the prime $\PPP$, let
\begin{align*}
    A_{\vp,\PPP}&=\begin{pmatrix}
        0 & \dfrac{-1}{p^{\ell+1}v_{\PP}} \\[12pt]  1 & \dfrac{a_{\PPP}}{p^{\ell+1}v_{\PP}},
    \end{pmatrix},\\[10pt]
    Q_{\PPP}&= \begin{pmatrix} \alpha_{\PPP} & -\beta_{\PPP} \\[9pt]
-p^{\ell+1}v_{\PP} & p^{\ell+1}v_{\PP} \end{pmatrix}.
\end{align*}
Using this, we can construct a logarithmic matrix $\underline{M^{\ell}(\PPP)}\in\HH_{E}(\GG_1)$ such that:
\begin{enumerate}
    \item If $Q_{\PPP}^{-1}(\underline{M^{\ell}(\PPP)})=\begin{pmatrix}
            P_{1}(\PPP) & P_{2}(\PPP) \\
            P_{3}(\PPP) & P_{4}(\PPP)
        \end{pmatrix},$
        then $P_{1}(\PPP), P_{2}(\PPP)\in \HH_{E,v_{p}(\alpha_{\PPP})}(\GG_1)$ and $P_{3}(\PPP), P_{4}(\PPP)\in \HH_{E,v_{p}(\beta_{\PPP})}(\GG_1)$.
        \item The second row of $A_{\varphi, \PPP}^{-n}\underline{M^{\ell}(\PPP)}$ is divisible by $\Phi_{n-1,\ell+1}(\gamma_{0})$ over $\HH_{E}(\GG_1)$.
        \item The determinant $\det(\underline{M^{\ell}(\PP)})$ is $\dfrac{\log_{p,\ell-1}(\gamma_{0})}{\delta_{\ell+1}(\gamma_{0}-1)}$, up to a unit in $\Lambda_{E}(\GG_1)$. 
\end{enumerate}
Define $\underline{M^{\ell}_{\PPP}}\coloneqq\mat_{\PPP}(\underline{M^{\ell}(\PPP)})\in\HH_{E}(\GG_{\PPP})$.

Note that we are getting $k+1,\ell+1$ instead of $k-1, \ell-1$ since $k,\ell\in \ZZ_{\geq 0}$.

Hence, by following the same methods used in the proof of Propositions \ref{prop3.6}, \ref{prop3.7}, \ref{prop3.8}, we can conclude:
\begin{thm}
    \label{mainthm3}
There exist $L^{\iota}_{\sharp,\sharp}, L^{\iota}_{\sharp,\flat}, L^{\iota}_{\flat,\sharp}, L^{\iota}_{\flat,\flat}\in \Lambda_{E}(G_{p^\infty})$ such that
\begin{equation}
    \begin{pmatrix}
            L^{\iota}_{\alpha_{\PP},\alpha_{\PPP}} & L^{\iota}_{\beta_{\PP},\alpha_{\PPP}}\\[6pt]
            L^{\iota}_{\alpha_{\PP},\beta_{\PPP}} & L^{\iota}_{\beta_{\PP},\beta_{\PPP}}
        \end{pmatrix}
        = Q^{-1}_{\PPP}\underline{M^{\ell}_{\PPP}} \begin{pmatrix}
            L^{\iota}_{\sharp,\sharp} & L^{\iota}_{\flat,\sharp}\\[6pt]
            L^{\iota}_{\sharp,\flat} & L^{\iota}_{\flat,\flat}
        \end{pmatrix} (Q^{-1}_{\PP}\underline{M^{k}_{\PP}})^{T}.
\end{equation}
\end{thm}

\subsection*{Data Availability}
Data sharing is not applicable to this article.

\subsection*{Conflict of interests}
There are no conflict of interests. 

\medskip

\nocite{*} 
\printbibliography[title={References}]

\end{document}